\def\RR{{\mathbb R}}
\def\Ric{{\rm Ric}}
\def\diag{{\rm diag}}
\newtheorem{theorem}{Theorem}[section]
\newtheorem{proposition}[theorem]{Proposition}
\newtheorem{corollary}[theorem]{Corollary}
\newtheorem{lemma}[theorem]{Lemma}
\newtheorem{definition}[theorem]{Definition}
\newtheorem{remark}[theorem]{Remark}
\def\conv{{\rm conv}}
\def\mcC{{\mycal C}}
\DeclareFontFamily{OT1}{rsfs}{}
\DeclareFontShape{OT1}{rsfs}{m}{n}{ <-7> rsfs5 <7-10> rsfs7 <10-> rsfs10}{}
\DeclareMathAlphabet{\mycal}{OT1}{rsfs}{m}{n}
\newcounter{marnote}
\def\osc{\mathop{\rm osc}}
\def\ringw{{\mathring{w}}}
\def\muGp{{\mu_\Gamma^+}}
\def\muGm{{\mu_\Gamma^-}}
\begin{document}
\title{Harnack inequalities and B\^ocher-type theorems for conformally invariant fully nonlinear degenerate elliptic equations}
\author{YanYan Li \footnote{Department of Mathematics, Rutgers University.
Partially supported by 
NSF grant DMS-1203961}~ and Luc Nguyen \footnote{Department of Mathematics, Princeton University}\\\\
Accepted for publication in Communications on Pure and Applied Mathematics}

\date{}

\maketitle

\begin{abstract}We give a generalization of a theorem of B\^ocher for the Laplace equation to a class of conformally invariant fully nonlinear degenerate elliptic equations. We also prove a Harnack inequality for locally Lipschitz viscosity solutions and a classification of continuous radially symmetric viscosity solutions.
\end{abstract}
\section{Introduction}

On a Riemannian manifold $(M,g)$ of dimension $n\ge 3$, consider
the Schouten tensor 
\[
A_g = \frac{1}{n-2}\Big(\Ric_g- \frac{1}{2(n-1)}\,R_g\,
g\Big), 
\]
where $\Ric_g$  denotes the Ricci  curvature. Let $\lambda(A_g)=(\lambda_1, \cdots, \lambda_n)$
denote the eigenvalues of
$A_g$ with respect to $g$, and let
\begin{equation}  
\Gamma\subset \Bbb R^n \mbox{ be an open convex symmetric
cone with vertex at  
the origin,} 
\label{2} 
\end{equation}
\begin{equation}
\Big\{\lambda \in \Bbb R^n | \lambda_i > 0, 1 \leq i \leq n\Big\} \subset
\Gamma \subset \Big\{\lambda \in \Bbb R^n | \sum^n_{i=1} \lambda_i >
0\Big\},
\label{3}
\end{equation}
\begin{multline}
f\in C^\infty (\Gamma) \cap C^0 (\overline{\Gamma}) \mbox{ be concave,
homogeneous of degree one,}\\
\text{and symmetric in } \lambda_i,
\label{5}
\end{multline}
\begin{equation}
f>0\ \mbox{in}\ \Gamma,
\quad f = 0 \mbox{ on } \partial\Gamma ; \quad
f_{\lambda_i} > 0 \ \mbox{in
} \Gamma   \  \forall 1 \leq i \leq n.
\label{6}
\end{equation}
The following fully nonlinear version of the Yamabe problem has received
much attention in recent years:
\begin{equation}
f\Big(\lambda\big(A_{u^{\frac{4}{n-2}}g}\big)\Big) =1, \quad u > 0  \quad\text{ and } \quad \lambda (A_{\hat g}) \in \Gamma
\quad \mbox{ on } M.
\label{9}
\end{equation}

For $1\leq k \leq n,$ let $\sigma_k(\lambda) = \sum_{1\leq i_1 <\cdots <
i_k\leq n}\lambda_{i_1}\cdots \lambda_{i_k}, \lambda = (\lambda_1,
\cdots, \lambda_n) \in \Bbb R^n$, denote the $k$-th elementary symmetric
function, and let $\Gamma_k$ denote the connected component of 
$\{\lambda \in \Bbb R^n | \sigma_k (\lambda) > 0\}$ containing the
positive cone $\{\lambda \in \Bbb R^n|\lambda_1, \cdots, \lambda_n >
0\}$.  Then $(f, \Gamma) = (\sigma^{1/k}_k, \Gamma_k)$
satisfies (\ref{2})-(\ref{6}). When  
$(f, \Gamma)=(\sigma_1, \Gamma_1)$, \eqref{9} is the 
Yamabe problem in the so-called positive case.

When $M$ is a Euclidean domain and $g = g_{\rm flat}$ is the flat metric, equation \eqref{9} takes the form
\begin{equation}  
f(\lambda(A^u)) = 1,\quad u> 0,
\label{liouville1}
\end{equation}
where $\lambda(A^u)$ denotes the eigenvalues of the matrix $A^u$ with entries
\[
(A^u)_{ij} := - 
\frac{2}{n-2}
 u^{-\frac{n+2}{n-2}}\nabla_{ij} u +
\frac{2n}{(n-2)^2}
u^{-\frac{2n}{n-2}} \nabla_i u\,\nabla_j u
- 
\frac{2}{(n-2)^{-2} }
u^{-\frac{2n}{n-2}}|\nabla u|^2 \delta_{ij}.
\]
Equation (\ref{9}) is a second
order fully nonlinear elliptic equation of $u$. Fully nonlinear elliptic equations involving
$f(\lambda(\nabla^2 u))$ was investigated in the
classic paper of Caffarelli, Nirenberg and Spruck \cite{C-N-S-Acta}.

Another equation which is closely related to \eqref{liouville1} is 
\begin{equation} 
\lambda(A^u)\in \partial \Gamma, \quad u> 0.
\label{liouville2}
\end{equation}  
Equation \eqref{liouville2} is equivalent to
\[
f(\lambda(A^u)) = 0, \quad u> 0 \quad \text{ and } \quad \lambda(A^u) \in \bar\Gamma.
\]
Both equations \eqref{liouville1} and \eqref{liouville2} arise naturally in studying blow-up sequences of solutions of \eqref{9}.

There have been many 
works on equations (\ref{liouville1}) and
(\ref{liouville2}),
 which include Liouville-type theorems for solutions of
  (\ref{liouville1}) and
(\ref{liouville2}) in $R^n$, Harnack-type inequalities,
symmetry of solutions of 
 (\ref{liouville1}) and
(\ref{liouville2}) on $R^n\setminus \{0\}$,
and behaviors of solutions of  (\ref{liouville1}) 
 near  isolated singularities; 
see e.g. \cite{CGY02-JAM, CGY03-IP, Chen05, Gonzalez05, Gonzalez06, GW03-IMRN, GV06, HLT10,  
LiLi03, LiLi05, Li06-JFA, Li07-ARMA,  Li09-CPAM, TW09, Wang06}.

The main focus of the present paper concerns
solutions of  \eqref{liouville2} with isolated
singularies. When $\Gamma = \Gamma_1$, \eqref{liouville2} is 
 $\Delta u = 0$. A classical theorem of B\^ocher \cite{Bocher1903} asserts that any positive harmonic function in the punctured  ball $B_1 \setminus \{0\} \subset \RR^n$ can be expressed as the sum of a multiple of the fundamental solution of the Laplace equation and a harmonic 
function  in the whole unit ball $B_1$. This can be viewed as a statement on the asymptotic behavior of a positive 
harmonic function near its isolated singularities. Our goal is to establish a generalization of this result for \eqref{liouville2}.

Equation \eqref{liouville2} is a fully nonlinear \emph{degenerate elliptic} equation. For example, when $\Gamma = \Gamma_k$ with $k \geq 2$, the strong maximum principle and the Hopf lemma fail for \eqref{liouville2} (see the discussion after \eqref{AxisOnBdry} below). For fully nonlinear \emph{uniformly elliptic} equations, extensions of B\^ocher's theorem have been established in the literature. See Labutin \cite{Labutin01}, Felmer and Quass \cite{FeQuaas09} and Armstrong, Sirakov and Smart \cite{ArmsSirSmart11}.

In the case of the non-degenerate elliptic equation \eqref{liouville1}
 with $(f,\Gamma) = (\sigma_k^{1/k}, \Gamma_k)$, local behavior at an isolated singularity is fairly well-understood: It was proved by Caffarelli, Gidas and Spruck \cite{CGS} for $k = 1$ and by Han, Li and Teixeira \cite{HLT10} for $2 \leq k \leq n$ that $u(x) = u_*(|x|)(1 + O(|x|^\alpha))$ where $u_*$ is some radial solution of \eqref{liouville1}
on $R^n\setminus\{0\}$
 and $\alpha$ is some positive number. This statement is complemented by the classification of radial solutions of \eqref{liouville1} by Chang, Han and Yang \cite{C-H-Y}. 
For \eqref{liouville2}
with $\Gamma$ satisfying \eqref{2} and \eqref{3}, it was
proved by the first author in \cite{Li09-CPAM}
that a locally Lipschitz viscosity solution in
$R^n\setminus\{0\}$ must be radially symmetric about $\{0\}$. 
We also note that Gonzalez showed in \cite{Gonzalez06-RemSing} that isolated singularities of $C^3$ solutions of \eqref{liouville1} with finite volume are bounded, among other statements.
See also \cite{Gonzalez06} for related work in the subcritical case.

As mentioned above, solutions of \eqref{liouville2} arise as (rescaled) limits of blow-up sequence of solutions of \eqref{9}, along which one may lose uniform ellipticity. For this reason, it is of interest to consider solutions $u$ of \eqref{liouville2} which is not $C^2$. We adopt the following definition for less regular solutions of \eqref{liouville2}. 
For  $\Omega \subset \RR^n$, we use $LSC(\Omega)$ and $USC(\Omega)$ to denote respectively the set of lower and upper semi-continuous (real valued) functions on $\Omega$.

\begin{definition}
Let $\Omega$ be an open subset of $\RR^n$, $\Gamma$ satisfy \eqref{2} and \eqref{3}, and $u$ be a positive function in $LSC(\Omega)$ ($USC(\Omega)$). 
We say that 
\[
\lambda(A^u) \in \bar\Gamma \qquad (\lambda(A^u) \in \RR^n \setminus \Gamma)
\]
in $\Omega$ in the viscosity sense if for any $x_0 \in \Omega$, $\varphi \in C^2(\Omega)$, $(u - \varphi)(x_0) = 0$ and
\[
u - \varphi \geq 0 \qquad ( u - \varphi \leq 0  ) \text{ near $x_0$},
\]
there holds
\[
\lambda(A^\varphi(x_0)) \in \bar\Gamma \qquad (\lambda(A^\varphi(x_0)) \in \RR^n \setminus \Gamma).
\]

We say that a positive continuous function $u$ satisfies $\lambda(A^u) \in \partial \Gamma$ in $\Omega$ in the viscosity sense if $\lambda(A^u)$ belongs to both $\bar\Gamma$ and $\RR^n \setminus \bar\Gamma$ in the viscosity sense thereof.
\end{definition}

It is well known that if a $C^2$ function satisfies the above differential relations in the viscosity sense then it satisfies them in the classical sense.

In our discussion, the constant $\muGp$ 
defined by
\begin{equation}
\muGp \in [0,n-1] \text{ is the unique number such that }(-\muGp, 1, \ldots 1) \in \partial\Gamma
	\label{muGamma}
\end{equation}
plays an important role. Note that $\muGp$ is well-defined thanks to \eqref{2} and \eqref{3}. For $\Gamma = \Gamma_k$, we have $\mu_{\Gamma_k}^+ = \frac{n-k}{k}$ for $1 \leq k \leq n$. In particular,
\[
\left\{\begin{array}{ll}
	\mu_{\Gamma_k}^+ > 1 &\text{ if } k < \frac{n}{2},\\
	\mu_{\Gamma_k}^+ = 1 &\text{ if } k = \frac{n}{2},\\
	\mu_{\Gamma_k}^+ < 1 &\text{ if } k > \frac{n}{2}.
\end{array}\right.
\]
As another example, for the so-called $\theta$-convex cone
\[
\Sigma_\theta =\Big\{\lambda\ :\
\lambda_i+\theta\sum_{j=1}^n\lambda_j>0\ \text{for all}\ i\Big\}, \quad \theta \geq 0,
\]
we have $\mu_{\Sigma_\theta}^+ = \frac{(n-1)\theta}{1 + \theta} \in [0,n-1)$.

For simplicity, in most
of this introduction, we restrict ourselves to the case where
\begin{equation}
(1, 0, \ldots, 0) \in \partial\Gamma.
	\label{AxisOnBdry}
\end{equation}
We note that when \eqref{AxisOnBdry} holds, the range for $\muGp$ is $[0,n-2]$.

Clearly, the cone $\Gamma_k$ for $2 \leq k \leq n$ satisfies \eqref{AxisOnBdry}. See Theorems \ref{BocherGen}, \ref{RadVisClfn} and \ref{BocherLargeEx} for the case where \eqref{AxisOnBdry} does not hold. Note that, by \eqref{3}, $(1, 0, \ldots, 0) \in \bar\Gamma$, and by \eqref{2} and \eqref{3}, \eqref{AxisOnBdry} is equivalent to
\begin{equation}
(\lambda_1, -1, \ldots, -1) \in \RR^n \setminus \bar\Gamma \text{ for all } \lambda_1 \in \RR.
	\label{NoNegPart}
\end{equation}
In \cite{LiNir-misc}, it was shown that, under \eqref{AxisOnBdry}, the strong maximum principle and the Hopf lemma fail for a large class of nonlinear degenerate elliptic equations including \eqref{liouville2}. Conversely, if \eqref{AxisOnBdry} does not hold, then the strong maximum principle and the Hopf lemma hold.

Our first two main theorems (which cover the case $\Gamma = \Gamma_k$  for $2 \leq k \leq \frac{n}{2}$) are as follows.

\begin{theorem}\label{BocherSmall}
Assume that $\Gamma$ satisfies \eqref{2}, \eqref{3}, \eqref{AxisOnBdry}, 
and $ \muGp >1 $. Let $u \in C^{0,1}_{\rm loc}(B_1 \setminus \{0\})$ be a positive viscosity solution of \eqref{liouville2} in $B_1 \setminus \{0\}$. Then
\[
u(x)^{\frac{\muGp - 1}{n-2}} = a^{\frac{\muGp - 1}{n-2}}\,|x|^{-\muGp + 1} + \ringw(x),
\]
where
\[
a = \inf_{x \in B_1 \setminus \{0\}} |x|^{n-2}\,u(x) \geq 0,
\]
and $\ringw$ is a non-negative function in $L^\infty_{\rm loc}(B_1)$. Moreover,
\begin{align}
\text{ either } &\ringw \equiv 0 \text{ in } B_1 \setminus \{0\} \text{ and } u(x) \equiv a\,|x|^{-(n-2)} > 0\text{ in } B_1 \setminus \{0\},
	\label{RigidSmall}\\
\text{ or } & 0 < \min_{\partial B_r} \ringw \leq \ringw \leq \max_{\partial B_r} \ringw \text{ in } B_r \setminus \{0\} \qquad \forall~ 0 < r < 1.\label{RadMaxMinPrinRW}
\end{align}
Finally,
if  $a = 0$  then  $u$ 
can be extended to a positive function
in $C^{0,\beta}_{\rm loc}(B_1)$  and 
\[
 \|u\|_{C^{0,\beta}(B_{1/2})} \leq C(\Gamma,\beta)\,\sup_{B_{1/2}} u ~\forall~ \beta \in (0,1).
\]
\end{theorem}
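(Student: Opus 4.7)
The strategy combines three ingredients: the Harnack inequality for locally Lipschitz viscosity solutions of \eqref{liouville2} (established earlier in the paper), the symmetry theorem of \cite{Li09-CPAM} for viscosity solutions on $\RR^n \setminus \{0\}$, and comparison with the explicit singular radial solution $a|x|^{-(n-2)}$, which satisfies $A^u \equiv 0$ and is thus a solution of \eqref{liouville2}.

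First, $a = \inf_{B_1 \setminus \{0\}} |x|^{n-2} u(x)$ is finite by local Lipschitz regularity of $u$, and the bound $u(x) \geq a|x|^{-(n-2)}$ is immediate from the definition of $a$. Since $(\muGp-1)/(n-2) > 0$, monotonicity of $s \mapsto s^{(\muGp-1)/(n-2)}$ gives $\ringw \geq 0$ pointwise. The main technical step is then to show $\ringw \in L^\infty_{\loc}(B_1)$. For this I would use a blow-up argument: set $u_R(x) = R^{n-2} u(Rx)$ for small $R$. Scale invariance of \eqref{liouville2} makes $u_R$ a locally Lipschitz viscosity solution on $B_{1/R} \setminus \{0\}$, the Harnack inequality produces uniform compactness on compact subsets of $\RR^n \setminus \{0\}$, and the normalization $\inf_{B_{1/R} \setminus \{0\}} |y|^{n-2} u_R(y) = a$ is preserved under scaling. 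Along a subsequence $R \to 0$ one extracts a positive locally Lipschitz viscosity solution $u_\infty$ on $\RR^n \setminus \{0\}$; by \cite{Li09-CPAM}, $u_\infty$ is radially symmetric, and the classification of radial viscosity solutions (cf.\ Theorem \ref{RadVisClfn}) combined with the hypothesis $\muGp > 1$ forces $u_\infty(x) = a|x|^{-(n-2)}$ (any other radial solution with $\muGp > 1$ grows more slowly at the origin and would violate either the normalization or the preserved infimum). Quantifying the rate of convergence $u_R \to u_\infty$ produces the upper bound on $\ringw$, with the exponent $(\muGp-1)/(n-2)$ arising naturally from the slowest-decaying radial perturbation of $a|x|^{-(n-2)}$.

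For the dichotomy \eqref{RigidSmall}--\eqref{RadMaxMinPrinRW}, I would argue by comparison principles on concentric spheres. Since $a|x|^{-(n-2)}$ is a solution and $u$ is a solution with $u \geq a|x|^{-(n-2)}$, a comparison principle adapted to the degenerate operator (exploiting that \eqref{AxisOnBdry} excludes the ordinary strong maximum principle but still allows maximum principles in the radial variable, via the conformal-rotational invariance of \eqref{liouville2}) yields the alternative: either $\ringw$ vanishes at some interior point, in which case the comparison forces $u \equiv a|x|^{-(n-2)}$ and \eqref{RigidSmall} holds, or else $\ringw > 0$ throughout and the sphere-by-sphere extrema of $\ringw$ control its values on $B_r \setminus \{0\}$, giving \eqref{RadMaxMinPrinRW}. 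Finally, in the case $a = 0$, the bound $\ringw \in L^\infty_{\loc}$ directly yields $u \in L^\infty_{\loc}(B_1)$; a removable singularity argument together with standard viscosity-solution Hölder estimates for concave degenerate operators then give the claimed $C^{0,\beta}_{\loc}(B_1)$ extension and the bound $\|u\|_{C^{0,\beta}(B_{1/2})} \leq C(\Gamma,\beta)\sup_{B_{1/2}} u$.

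The principal obstacle is the quantified rate of convergence in the blow-up step. Because \eqref{liouville2} is genuinely degenerate, Schauder and Krylov--Safonov machinery are unavailable, so the precise exponent $(\muGp-1)/(n-2)$ has to be extracted by hand, via delicate comparisons with perturbations of $a|x|^{-(n-2)}$ built from the structure of $\partial\Gamma$ near the direction $(-\muGp,1,\ldots,1)$ and the concavity of $f$.
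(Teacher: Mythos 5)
Your overall architecture (Harnack inequality, the symmetry theorem of \cite{Li09-CPAM}, comparison with $a|x|^{2-n}$) matches the paper's, but the two steps you defer are precisely the ones that carry the content, and as written they contain genuine gaps. The main one is the $L^\infty_{\rm loc}$ bound on $\ringw$ together with the two-sided sphere-wise estimate. You propose to obtain it by ``quantifying the rate of convergence $u_R \to u_\infty$'' in the blow-up, and you yourself flag this as the principal obstacle without resolving it; note that the blow-up limit only yields $\lim_{|x|\to 0}|x|^{n-2}u(x)=a$ (this is exactly Theorem \ref{BocherGen}), i.e. $\ringw(x)=o(|x|^{-\muGp+1})$, which is far from boundedness. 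The paper closes this gap with no rate-of-convergence analysis at all: by Theorem \ref{RadVisClfn} the functions
\[
v^{\pm}_{\epsilon,r}(x)=\Big[(a\pm\epsilon)^{\frac{\muGp-1}{n-2}}\,|x|^{-\muGp+1}+\max_{\partial B_r}\ringw \ \text{(resp. } \min_{\partial B_r}\ringw\text{)}\Big]^{\frac{n-2}{\muGp-1}}
\]
are \emph{exact} radial solutions of \eqref{liouville2}; they sandwich $u$ on $\partial B_r$ by construction and near the origin by Theorem \ref{BocherGen}, so the comparison principle (Lemma \ref{E1-1}, which rests on the perturbation lemma of \cite{Li07-ARMA}, not on concavity of $f$ --- the equation $\lambda(A^u)\in\partial\Gamma$ does not involve $f$) gives $v^-_{\epsilon,r}\le u\le v^+_{\epsilon,r}$ on $B_r\setminus\{0\}$, and letting $\epsilon\to 0$ yields simultaneously $\ringw\in L^\infty_{\rm loc}$ and $\min_{\partial B_r}\ringw\le\ringw\le\max_{\partial B_r}\ringw$. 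The exponent $\tfrac{\muGp-1}{n-2}$ is thus read off from the classification, not ``extracted by hand from the structure of $\partial\Gamma$.''

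Two further points. For the dichotomy, your hesitation about the strong maximum principle is misplaced: although it fails for the fully nonlinear operator under \eqref{AxisOnBdry}, the paper simply uses the ordinary strong maximum principle for the \emph{Laplacian}, since \eqref{3} makes $u$ superharmonic while $a|x|^{2-n}$ is harmonic, and $\min_{\partial B_r}\ringw=0$ for some $r$ produces an interior touching point. For the case $a=0$, invoking ``standard viscosity-solution H\"older estimates for concave degenerate operators'' is not legitimate --- you correctly observe elsewhere that Krylov--Safonov machinery is unavailable here. The paper instead proves the $C^{0,\beta}_{\rm loc}$ extension as a separate result (Proposition \ref{RemSing}), via a moving-spheres/Kelvin-transform argument, the calculus Lemma \ref{HalfDirDerEst}, and a harmonic-replacement step; this is a substantive piece of the proof that your proposal replaces with a citation to estimates that do not apply.
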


\begin{theorem}\label{BocherMiddle}
Assume that $\Gamma$ satisfies \eqref{2}, \eqref{3}, \eqref{AxisOnBdry}, and $\muGp = 1$. Let $u \in C^{0,1}_{\rm loc}(B_1 \setminus \{0\})$ be a positive viscosity solution of \eqref{liouville2} in $B_1 \setminus \{0\}$. Then
\[
\ln u(x) = -\alpha\,\ln|x| + \ringw(x),
\]
where $\alpha \in [0,n-2]$ and $\ringw \in L^\infty_{\rm loc}(B_1)$ satisfying
\begin{equation}
\min_{\partial B_r} \ringw \leq \ringw \leq \max_{\partial B_r} \ringw \text{ in } B_r \setminus \{0\}
	\quad \forall~ 0 < r < 1.
\label{RadMaxMinPrinRWM}
\end{equation}
If $\alpha = n-2$, then $\ringw$ is constant, i.e. $u(x) = \frac{C}{|x|^{n-2}}$ for some positive constant $C$. If $\alpha = 0$, then $u$
can be extended to a positive function in $C^{0,\beta}(B_1)$ and $\|u\|_{C^{0,\beta}(B_{1/2})} \leq C(\Gamma,\beta)\,\sup_{B_{1/2}} u$ for all $\beta \in (0,1)$.
\end{theorem}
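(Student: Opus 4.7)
The plan is to mirror the strategy of Theorem \ref{BocherSmall}, replacing the power substitution $w = u^{(\muGp-1)/(n-2)}$ by its formal limit at $\muGp = 1$, namely the logarithmic substitution $\ringw := \ln u + \alpha \ln |x|$. The starting point is the classification of positive radial viscosity solutions of $\lambda(A^u) \in \partial\Gamma$ on $\RR^n \setminus \{0\}$ (Theorem \ref{RadVisClfn}): when $\muGp = 1$, such solutions are precisely $u_\alpha(x) = c\,|x|^{-\alpha}$ for $\alpha \in [0,n-2]$ and $c > 0$. These form the one-parameter family of model barriers that will drive the argument.

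First I would identify the correct exponent by setting
\[
\alpha := \sup\Bigl\{\beta \geq 0 \, : \, \liminf_{x \to 0} |x|^{\beta}\, u(x) > 0\Bigr\}.
\]
Using moving-sphere comparison (in the style of \cite{Li09-CPAM}) against the family $\{u_\alpha\}$, together with the constraint $\muGp = 1$, I would show $\alpha \in [0,n-2]$ and that $u(x)$ is comparable to $|x|^{-\alpha}$ near $0$. The Harnack inequality established earlier in the paper then converts this two-sided bound on $\partial B_r$ into local $L^\infty$ boundedness of $\ringw$ on $B_1$, with $\ringw$ controlled in terms of $\osc_{\partial B_r} \ln u$.

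Next I would establish the radial maximum/minimum principle \eqref{RadMaxMinPrinRWM}. Suppose there exist $r \in (0,1)$ and $x_0 \in B_r \setminus \{0\}$ with $\ringw(x_0) > \max_{\partial B_r} \ringw$. Pick $c > 0$ so that $v(x) := c\,|x|^{-\alpha}$ satisfies $\ln v \geq \ln u$ on $\partial B_r$ and $\ln v(x_0) < \ln u(x_0)$. Since $v$ is itself a classical solution of $\lambda(A^v) \in \partial\Gamma$ on $\RR^n \setminus \{0\}$, the viscosity comparison machinery for conformally invariant degenerate equations (available through the earlier sections of the paper and \cite{Li09-CPAM}) applied on the annulus $\{0 < |x| < r\}$ produces a contradiction, provided one can rule out touching at the puncture; this is where the control $\alpha \leq n-2$, combined with the singularity rate of $u$, is used. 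The minimum statement is symmetric.

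For the rigidity at $\alpha = n-2$, a comparison with $v(x) = C\,|x|^{-(n-2)}$ touching $u$ from above at some point forces, via the strict monotonicity of $f$ and the moving-sphere argument, equality $u \equiv v$. For the removability case $\alpha = 0$, local boundedness of $u$ near the origin combined with the H\"older estimate for viscosity solutions proved earlier (as invoked at the end of Theorem \ref{BocherSmall}) extends $u$ to a positive function in $C^{0,\beta}_{\loc}(B_1)$ with the stated estimate. The main obstacle I anticipate is the radial max/min principle: since the entire family $u_\alpha$ solves the degenerate equation $\lambda(A^{u_\alpha}) \in \partial\Gamma$ simultaneously, the ordinary strong comparison principle degenerates, and one must exploit the precise viscosity formulation together with the asymptotic matching at $|x|=0$ to close the comparison argument.
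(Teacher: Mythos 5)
Your overall strategy matches the paper's: identify the exponent $\alpha$, use the radial classification (Theorem \ref{RadVisClfn}) to build logarithmic barriers, prove \eqref{RadMaxMinPrinRWM} by comparison, and handle $\alpha = n-2$ and $\alpha = 0$ separately. But the step you yourself flag as ``the main obstacle'' --- closing the comparison at the puncture --- is the crux of the proof, and the barrier you propose cannot close it. With $v(x) = c\,|x|^{-\alpha}$ carrying the \emph{exact} exponent $\alpha$, the barrier and $u$ have the same growth rate at $0$, so no choice of $c$ simultaneously dominates $u$ near the origin and stays below $u$ at $x_0$; the bound $\alpha \le n-2$ is irrelevant to this. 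The device that works (and is what the paper uses) is to perturb the exponent: take $v^{\pm}_{\epsilon,r}(x) = \exp[-(\alpha\pm\epsilon)\ln|x| + \max/\min_{\partial B_r}\ringw]$. Since Lemma \ref{MiddleAsymp} gives $\ln u(x)/\ln|x| \to -\alpha$, the barrier $v^{+}_{\epsilon,r}$ strictly dominates $u$ near $0$ and $v^{-}_{\epsilon,r}$ is strictly dominated there, so Lemma \ref{E1-1} applies on annuli $\{\delta<|x|<r\}$; one then sends $\epsilon\to 0$. Relatedly, your claim that $u$ is \emph{comparable} to $|x|^{-\alpha}$ near $0$ (via an unspecified moving-sphere argument) is essentially the $L^\infty_{\rm loc}$ bound on $\ringw$ that you are trying to prove; the paper only establishes the much weaker logarithmic asymptotic first (via the monotonicity of the difference quotient in Lemma \ref{XYZ} plus the Harnack inequality), and the comparability comes out of \eqref{RadMaxMinPrinRWM}, not into it.

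The rigidity argument at $\alpha = n-2$ is also problematic as sketched: you invoke ``the strict monotonicity of $f$'' to upgrade a touching point to identity, but the equation is degenerate precisely because $f$ vanishes on $\partial\Gamma$, and under \eqref{AxisOnBdry} the strong maximum principle \emph{fails} for \eqref{liouville2} (this is stated explicitly in the introduction via \cite{LiNir-misc}). The correct route is to reduce to the Laplacian: show first that the radialization $v(r)=\min_{\partial B_r}u$ equals $Cr^{2-n}$ exactly --- the difference quotient $[\ln v(r)-\ln v(s)]/[\ln s-\ln r]$ is $\ge n-2$ by the monotonicity of Lemma \ref{XYZ} and $\le n-2$ by \eqref{SuperSolLipBnd} --- and then apply the strong maximum principle for $\Delta$ to the superharmonic function $u$ lying above the harmonic function $C|x|^{2-n}$ and touching it. Your treatment of $\alpha=0$ (boundedness plus the interior H\"older estimate, i.e.\ Proposition \ref{RemSing}) is fine.
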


When $0 \leq \muGp < 1$, which is the case for $\Gamma = \Gamma_k$ with $k > \frac{n}{2}$, there have been works in the literature. In this case, $\Gamma$ is
closely related to the so-called $\theta$-convex cone $\Sigma_\theta$ for some $0 \leq \theta < \frac{1}{n-2}$ (see Appendix \ref{AppL21} for a definition). For such $\Gamma$, Gursky and Viaclovsky \cite{GV06} showed that classical solutions of \eqref{liouville2} in a punctured ball
 either extends to a H\"older continuous function or is pinched between two multiples of $|x|^{2-n}$. 
For $\Gamma = \Gamma_k$ with $\frac{n}{2} < k \leq n$,
Li showed in 
\cite{Li06-JFA} that
bounded classical solutions in a punctured ball 
extends to a H\"older continuous function in the ball, and
 Trudinger and Wang showed
in  \cite{TW09}   that solutions of $\lambda(A^u) \in \bar\Gamma_k$ in $B_1$ in some appropriate weak sense is either H\"older continuous or is a multiple of $|x - x_0|^{2-n}$ for some $x_0$. Using a result of Caffarelli, Li and Nirenberg \cite[Theorem 1.1]{CafLiNir11} (see also Proposition \ref{SuperSolExt}), we prove:

\begin{theorem}\label{BocherLarge}
Assume that $\Gamma$ satisfies \eqref{2}, \eqref{3}, \eqref{AxisOnBdry} and $0 \leq \muGp < 1$. Let $u \in LSC(B_1 \setminus \{0\}) \cap L^\infty_{\rm loc}(B_1 \setminus \{0\}) $ be a positive viscosity solution of $\lambda(A^u) \in \bar\Gamma$ in $B_1 \setminus \{0\}$. Then either $u = \frac{C}{|x|^{n-2}}$ for some $C > 0$, or $u$ can be extended to a positive function in $C_{\rm loc}^{0,1 - \muGp}(B_1)$. Moreover, in the latter case, there holds
\begin{equation}
\| u^{\frac{\muGp - 1}{n-2}}\|_{C^{0,1 - \muGp}(B_{1/2})} \leq C(\Gamma)\,\sup_{\partial B_{3/4}} u^{\frac{\muGp - 1}{n-2}}.
	\label{BL-Est}
\end{equation}
\end{theorem}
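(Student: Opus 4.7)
The plan is to separate the analysis into two cases via a removable-singularity argument across the origin.

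\textbf{Step 1 (Extension).} First I would apply Proposition~\ref{SuperSolExt} (the Caffarelli--Li--Nirenberg extension theorem) directly to the positive, locally bounded, LSC viscosity supersolution $u$ on $B_1 \setminus \{0\}$. This extends $u$ as a viscosity supersolution to all of $B_1$, with $u(0) := \liminf_{x \to 0} u(x) \in (0, \infty]$, producing the dichotomy into the singular case $u(0) = \infty$ and the regular case $u(0) < \infty$, which mirror the two alternatives in the conclusion.

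\textbf{Step 2 (Rigidity in the singular case).} The goal is $u \equiv C|x|^{-(n-2)}$. A direct computation gives $A^{U_C} \equiv 0$ for $U_C(x) := C|x|^{-(n-2)}$, so each $U_C$ is a classical solution of $\lambda(A^u) \in \partial\Gamma$. Comparing the extended $u$ with the family $\{U_C\}$ should force $a := \liminf_{x \to 0}|x|^{n-2} u(x) \in (0, \infty)$ and that $u$ agrees with $U_a$ somewhere near $0$. Because \eqref{AxisOnBdry} makes the strong maximum principle and the Hopf lemma fail (\cite{LiNir-misc}), this agreement cannot be propagated by a local touching argument. Instead, one would transfer $u$ to a global solution on $\RR^n \setminus \{0\}$ via a Kelvin-type inversion, apply the radial symmetry theorem of \cite{Li09-CPAM}, and then use the classification of radial solutions on $\partial\Gamma$ to identify $u \equiv U_a$.

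\textbf{Step 3 (H\"older estimate in the regular case).} In this case $u$ is bounded and bounded away from $0$ on $B_{3/4}$, and so is $v := u^{(\muGp-1)/(n-2)}$ (a bounded positive continuous function there, since $\muGp < 1$ makes the exponent negative). The H\"older exponent $1-\muGp$ arises from inspecting bounded radial solutions of $\lambda(A^U) \in \partial\Gamma$: the defining condition $(-\muGp, 1, \ldots, 1) \in \partial\Gamma$ produces an ODE whose bounded radial profiles, after the transformation $U \mapsto U^{(\muGp-1)/(n-2)}$, expand as $v(0) + c\,r^{1-\muGp}$ near $r = 0$. Using these profiles as radial barriers centered at each $x_0 \in B_{1/2}$ and applying viscosity comparison yields
\[
|v(x) - v(x_0)| \leq C(\Gamma)\,\sup_{\partial B_{3/4}} v \cdot |x - x_0|^{1-\muGp},
\]
which gives \eqref{BL-Est} upon varying $x_0$.

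The hard part will be Step~2: because the strong maximum principle and the Hopf lemma fail for \eqref{liouville2} under \eqref{AxisOnBdry}, the rigidity $u \equiv a|x|^{-(n-2)}$ cannot be obtained from a local touching argument alone, so one must go global via Kelvin inversion and the radial symmetry classification, taking care that these preserve both the viscosity supersolution property and the blow-up rate at the origin.
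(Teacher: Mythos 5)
Your overall dichotomy (singular/rigid versus regular/H\"older) is the right one, and your Step 3 is essentially the paper's Proposition \ref{RemSingLarge}: compare $u$ from below with radial profiles $\bigl(c_1|x-\bar x|^{-\muGp+1}+c_2\bigr)^{\frac{n-2}{\muGp-1}}$ centered at each point via Lemma \ref{E1-1} and read off the $C^{0,1-\muGp}$ bound on $u^{\frac{\muGp-1}{n-2}}$. The genuine gap is in Step 2. The radial symmetry theorem \cite[Theorem 1.18]{Li09-CPAM} applies to locally Lipschitz viscosity solutions of $\lambda(A^u)\in\partial\Gamma$ on $\RR^n\setminus\{0\}$, whereas your $u$ is only an $LSC$, locally bounded \emph{supersolution} ($\lambda(A^u)\in\bar\Gamma$); neither the Kelvin inversion nor the symmetry classification is available for such an object, so the proposed global route does not get off the ground. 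Your stated reason for abandoning a touching argument is also misplaced: the comparison function $C|x|^{2-n}$ is \emph{harmonic}, and by \eqref{3} any supersolution of $\lambda(A^u)\in\bar\Gamma$ is superharmonic, so only the classical strong maximum principle for the Laplacian is needed, and that one holds.

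That is exactly how the paper argues. Set $v(r)=\min_{\partial B_r}u$, which is again a radial $LSC$ supersolution. Using the monotonicity bounds of Lemma \ref{Lem:VisC0=>Lip} together with Corollary \ref{Shooting} and the explicit radial solutions of Theorem \ref{RadVisClfn}, one shows that either $v\equiv C|x|^{2-n}$ exactly on $B_1\setminus\{0\}$ or $v$ is bounded near the origin (this is the content of your unproved claim that $u$ ``agrees with $U_a$ somewhere near $0$''; the choice of $v$ as the spherical minimum is what delivers the touching set). In the first case $u\ge v$, $\Delta u\le 0=\Delta v$, and $u=v$ at a point of every sphere, so $u\equiv C|x|^{2-n}$; in the second case Proposition \ref{RemSingLarge} applies. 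A further small point: Lemma \ref{SuperSolExt} extends $u$ as a real-valued $LSC$ function, so it cannot be used to manufacture the dichotomy $u(0)=\infty$ versus $u(0)<\infty$; the dichotomy has to be extracted in the punctured ball as above.
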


The H\"older exponent obtained in Theorem \ref{BocherLarge} is optimal (see Theorem \ref{RadVisClfn}). If $\Gamma$ does not satisfy \eqref{AxisOnBdry}, the rigidity assertion about singular solutions of \eqref{liouville2} in Theorem \ref{BocherLarge} is false. For example, for $\Gamma = \Sigma_\theta$ with $0 < \theta < \frac{1}{n-2}$, the function
\[
u(x) = u(|x|) = \Big(|x|^{-(n - 2 + \theta^{-1})} - \frac{1}{2}\Big)^{\frac{n-2}{n - 2 + \theta^{-1}}}
\]
is a positive radially symmetric solution of \eqref{liouville2} in $B_1 \setminus \{0\}$, which is singular at the origin but is not a multiple of $|x|^{2-n}$.

For $\Gamma_k$, $k > \frac{n}{2}$, if $u$ is a weak solution of $\lambda(A^u) \in \bar\Gamma_k$ in $B_1$ in the sense of \cite{TW09}, then $u$ is a viscosity solution of $\lambda(A^u) \in \bar\Gamma_k$ in $B_1$. On the other hand, it is unclear to us that the converse is true.

A key technical step of our proof of the B\^ocher-type theorems is the following Harnack inequality for $C^{0,1}$ viscosity solutions of \eqref{liouville2} which is of independent interest.

\begin{theorem}\label{DegGradEst}
Assume that $\Gamma$ satisfies \eqref{2} and \eqref{3}. Let $u \in C^{0,1}(B_1)$ be a positive viscosity solution of $\lambda(A^u) \in \partial\Gamma$ in $B_1$. Then, for every $0 < \epsilon < 1$, there exists a constant $C = C(\Gamma,\epsilon)$ such that
\[
|\nabla \ln u| \leq C \text{ a.e. in } B_{1 - \epsilon}.
\]
Consequently,
\[
\sup_{B_{1 -\epsilon}} u \leq e^C\,\inf_{B_{1 - \epsilon}} u.
\]
\end{theorem}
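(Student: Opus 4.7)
The plan is a proof by contradiction combined with a blow-up analysis, with the key input a Liouville-type rigidity theorem derived from \cite{Li09-CPAM}. The needed rigidity asserts that every positive $C^{0,1}_{\rm loc}(\RR^n)$ viscosity solution of $\lambda(A^u)\in\partial\Gamma$ on $\RR^n$ is constant: for every $x_0\in\RR^n$, such a $u$ restricted to $\RR^n\setminus\{x_0\}$ is a locally Lipschitz viscosity solution on a punctured space and therefore radially symmetric about $x_0$ by \cite{Li09-CPAM}; symmetry about every point of $\RR^n$ forces $u$ to be constant.

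Suppose the estimate fails. Then for some $\epsilon>0$ there exist positive viscosity solutions $u_k\in C^{0,1}(B_1)$ of $\lambda(A^{u_k})\in\partial\Gamma$ with $\mathrm{Lip}(\ln u_k;B_{1-\epsilon})\to\infty$. A Hamilton-style point-picking, applied to a scale-invariant modification of $|\nabla\ln u_k|$, produces a center $x_k\in B_{1-\epsilon/2}$ and a scale $r_k\to 0$ (with $B_{2r_k}(x_k)\Subset B_{1-\epsilon/2}$) such that $\mathrm{Lip}(\ln u_k;B_{r_k}(x_k))$ is of order $1/r_k$, and a witness pair $p_k,q_k\in \bar B_{r_k}(x_k)$ satisfies $|p_k-q_k|\ge c_0 r_k$ and $|\ln u_k(p_k)-\ln u_k(q_k)|\ge c_0$ for some universal $c_0>0$; iterating this selection on the rescaled family extends the Lipschitz control to concentric balls of arbitrarily large rescaled radius.

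Define the conformal rescaling $w_k(z):=u_k(x_k+r_k z)/u_k(x_k)$. The identity $A^{c\,u(x_0+r\cdot)}(z)=r^2 c^{-4/(n-2)}A^u(x_0+rz)$, combined with the cone property of $\partial\Gamma$, shows each $w_k$ is a positive viscosity solution of $\lambda(A^{w_k})\in\partial\Gamma$ on $B_{R_k}(0)$ with $R_k\to\infty$. By construction, $w_k(0)=1$, $\ln w_k$ is uniformly Lipschitz on each compact subset of $\RR^n$ for $k$ large, and the rescaled witnesses $\tilde p_k:=(p_k-x_k)/r_k,\tilde q_k:=(q_k-x_k)/r_k\in \bar B_1(0)$ satisfy $|\tilde p_k-\tilde q_k|\ge c_0$ and $|\ln w_k(\tilde p_k)-\ln w_k(\tilde q_k)|\ge c_0$. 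Thus $w_k$ is uniformly bounded above and below on compacts; by Arzel\`a--Ascoli, a subsequence converges locally uniformly to $w\in C^{0,1}_{\rm loc}(\RR^n)$, $w>0$, and stability of the viscosity formulation under locally uniform convergence yields $\lambda(A^w)\in\partial\Gamma$ on $\RR^n$. Passing the witnesses to limits $\tilde p_\infty,\tilde q_\infty$ preserves $|\ln w(\tilde p_\infty)-\ln w(\tilde q_\infty)|\ge c_0>0$, contradicting the Liouville rigidity. The Harnack conclusion $\sup u\le e^C\inf u$ on $B_{1-\epsilon}$ then follows by integrating $|\nabla\ln u|\le C$ along line segments.

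The main obstacle is the point-picking step: one must choose the blow-up scale so that the large slope of $\ln u_k$ is captured by two points whose rescaled separation is bounded below, rather than merely by an (a.e.) pointwise derivative at a single point---which does not pass to the limit under $C^0$ convergence and cannot be upgraded via $C^{1,\alpha}$ regularity, since no such regularity is available for the degenerate equation. Balancing this oscillation witness against the uniform Lipschitz bound required for Arzel\`a--Ascoli compactness is what forces the combination of the Hamilton-type scale selection with an iterative rescaling to propagate Lipschitz bounds over expanding balls.
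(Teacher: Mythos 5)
Your overall architecture (contradiction, point-picking, rescaling, compactness, Liouville rigidity on $\RR^n$) is the same as the paper's, and your derivation of the Liouville statement from the radial symmetry theorem of \cite{Li09-CPAM} is acceptable (the paper simply cites \cite[Theorem 1.4]{Li09-CPAM}). However, there is a genuine gap at precisely the step you yourself flag as ``the main obstacle,'' and it is not resolved by the Hamilton-type scale selection you invoke. If you blow up the \emph{Lipschitz} seminorm, the selection gives you centers $x_k$ and scales $r_k$ with ${\rm Lip}(\ln u_k;B_{r_k}(x_k))\sim 1/r_k$ and, after rescaling, a uniform Lipschitz bound on expanding balls; but it does \emph{not} produce a witness pair with $|p_k-q_k|\ge c_0 r_k$ \emph{and} $|\ln u_k(p_k)-\ln u_k(q_k)|\ge c_0$. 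A large Lipschitz quotient is realized by pairs whose separation can be arbitrarily small compared to $r_k$, and the Lipschitz seminorm is not lower semicontinuous under uniform convergence (consider $k^{-1}\sin(kx)\to 0$), so the normalization ${\rm Lip}(\ln w_k;B_1)\asymp 1$ can simply evaporate in the $C^0$ limit $w\equiv{\rm const}$ without any contradiction. Since, as you note, no $C^{1,\alpha}$ theory is available for this degenerate equation, the argument as written does not close.

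The paper circumvents this with two ingredients that are absent from your proposal. First, an auxiliary gradient estimate (Lemma \ref{DegGradEstSp}) of the form $|\nabla\ln u|\le C(n)\bigl[\sup_{B_{3/4}}u/\inf_{B_{3/4}}u\bigr]^{1/(n-2)}$ a.e.\ in $B_{1/2}$, proved not by blow-up but by the moving-spheres comparison $u_{x,\lambda}\le u$ (via \cite[Proposition 1.14]{Li09-CPAM}) together with the calculus lemma \cite[Lemma 2]{LiNg-arxiv}; this is the engine converting a two-sided bound on $u$ into a gradient bound, and nothing in your proposal plays this role. Second, the blow-up is performed on the $C^{0,\alpha}$ seminorm with $\alpha<1$, using the function $\delta(\cdot,x,\alpha)$ of \cite{Li09-CPAM}: the normalization $[\ln v_i]_{\alpha,1}(0)=1$ together with the uniform Lipschitz bound on $v_i$ supplied by Lemma \ref{DegGradEstSp} forces the points nearly realizing the $\alpha$-seminorm to have separation bounded below by a constant (since $|\ln v_i(y)-\ln v_i(0)|/|y|^\alpha\le C|y|^{1-\alpha}$), and only then does the oscillation witness survive the passage to the constant limit. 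To repair your proof you would need to supply both the moving-spheres lemma and the switch from the Lipschitz to a sub-Lipschitz H\"older normalization (or some equivalent device guaranteeing separated witnesses).
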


We note that an analogue of Theorem \ref{DegGradEst} for the equation
\[
f(\lambda(A^u)) = \psi, \quad \lambda(A^u) \in \Gamma\qquad \text{ in } B_1
\]
where $\psi$ is a smooth positive function in $B_1$ was proved  by the first author in \cite{Li09-CPAM}. For $(f,\Gamma) = (\sigma_k^{1/k},\Gamma_k)$ and smooth $\psi \geq 0$, gradient estimates for $C^3$ solutions were obtained  by Gursky and Viaclovsky \cite{GV06} based on earlier work of Guan and Wang \cite{GW03-IMRN}.

Beside Theorem \ref{DegGradEst}, another ingredient in our proof of the
 B\^ocher-type theorems
is a classification of all $C^0$ positive radially symmetric
viscosity  solutions \eqref{liouville2} in $\{a<|x|<b\}
:=\{x\in \RR^n\ |\ a<|x|<b\}$, where $0\le a<b\le \infty$.

\begin{theorem}\label{RadVisClfnRes}
Assume that $\Gamma$ satisfies \eqref{2}, \eqref{3} and \eqref{AxisOnBdry}. For $0 \leq a < b \leq \infty$, let $u \in C^0(\{a < |x| < b\})$ be radially symmetric and positive. Then $u$ is a solution of \eqref{liouville2} in $\{a < |x| < b\}$ in the viscosity sense if and only if 
\[
u(x) = \left\{\begin{array}{ll}
C_1\,|x|^{-C_2} \text{ with } C_1 > 0, 0 \leq C_2 \leq n-2 & \text{ if } \muGp = 1,\\
(C_3\,|x|^{-\muGp + 1} + C_4)^{\frac{n-2}{\muGp - 1}} \text{ with } C_3 \geq 0, C_4 \geq 0, C_3 + C_4 > 0  &\text{ if } \muGp \neq 1.
\end{array}\right.
\]
\end{theorem}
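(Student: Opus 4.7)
The proof splits into sufficiency (direct verification) and necessity (classification via a linearizing substitution).

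\emph{Sufficiency.} For radial $u(r)$, $A^u$ has a radial eigenvalue $\lambda_t$ (multiplicity $1$) and a tangential eigenvalue $\lambda_\theta$ (multiplicity $n-1$). Using the cylindrical coordinate $t = \ln r$ and setting $p = \frac{2 r u'(r)}{(n-2)u(r)}$, a direct calculation gives
\[
\lambda_\theta = -\tfrac{1}{2}p(p+2), \qquad \lambda_t + \lambda_\theta = -\dot p,
\]
with $\dot p = dp/dt$. For $u = C_1 r^{-C_2}$ with $0 \le C_2 \le n-2$, $p$ is constant, so $\lambda_t = -\lambda_\theta$ and the eigenvalue vector is a nonnegative multiple of $(-1,1,\ldots,1)$, which lies on $\partial\Gamma$ iff $\muGp = 1$. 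For $u = (C_3 r^{1-\muGp} + C_4)^{(n-2)/(\muGp - 1)}$ with $C_3, C_4 \geq 0$ (not both zero) and $\muGp \neq 1$, one computes $\dot p = (1-\muGp)p(p+2)/2$ and $p \in [-2,0]$, so $\lambda_t + \muGp \lambda_\theta = 0$ with $\lambda_\theta \geq 0$, placing $\lambda(A^u)$ on $\partial\Gamma$.

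\emph{Linearization.} For the converse, let $u$ be a continuous positive radial viscosity solution and define
\[
V = u^{(\muGp - 1)/(n-2)} \text{ if } \muGp \neq 1, \qquad V = \ln u \text{ if } \muGp = 1.
\]
A direct algebraic computation yields the key identity
\[
V'' + \frac{\muGp}{r} V' = \frac{(1-\muGp)V}{2 r^2}\bigl(\lambda_t + \muGp \lambda_\theta\bigr),
\]
so in the regime $\lambda_\theta > 0$ the condition $\lambda(A^u)\in\partial\Gamma$ is exactly the \emph{linear} uniformly-elliptic ODE $V'' + \muGp V'/r = 0$ on $(a,b) \subset (0,\infty)$. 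Its positive continuous viscosity solutions are classical by standard elliptic theory, with explicit form $V = C_3 r^{1-\muGp} + C_4$ (or $C_3 \ln r + C_4$ when $\muGp = 1$); back-substitution yields the listed families. The sign constraints $C_3, C_4 \geq 0$ come from the necessary condition $\lambda_\theta \geq 0$, equivalently $p \in [-2, 0]$: the explicit formula $p = -2 C_3 r^{1-\muGp}/V$ then forces both coefficients to be nonnegative.

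\emph{Main obstacle: viscosity equivalence.} The central technical task is to show $V$ inherits the status of a viscosity solution of the linear ODE from $u$. I would first reduce to radial $C^2$ test functions using rotational invariance of the equation and the permutation-symmetry of $\Gamma$: given a non-radial $\varphi$ touching $u$ at $x_0 \neq 0$, one averages $\varphi$ over the $O(n-1)$-stabilizer of $x_0$ to obtain a radial test function whose eigenvalues inherit the required $\partial\Gamma$-property. A radial $\varphi$ touching $u$ then transforms into $\Phi = \varphi^{(\muGp-1)/(n-2)}$ (or $\ln\varphi$) touching $V$; the touching direction is preserved when $\muGp > 1$ and reversed when $\muGp < 1$, which precisely matches the sign of $(1-\muGp)$ in the identity above so that super- and sub-solutions correspond correctly on both sides of the equation. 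The principal subtlety is the degenerate locus where $\lambda_\theta^\varphi = 0$ (i.e., $p^\varphi \in \{0,-2\}$), on which the condition in $\partial\Gamma$ only requires $\lambda_t^\varphi \geq 0$ rather than the equation $\lambda_t + \muGp \lambda_\theta = 0$ that yields the linear ODE; I would handle this by a small perturbation moving the touching test function off the degenerate locus, or by directly matching such tests to the boundary cases $C_3 = 0$ or $C_4 = 0$.
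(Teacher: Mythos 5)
Your sufficiency direction and the classical-solution computation essentially reproduce the paper's Case 1/Case 2 analysis and are fine. The gap is in the necessity direction, at exactly the step you yourself flag as the ``main obstacle'': the passage from ``$\lambda(A^u)\in\partial\Gamma$ in the viscosity sense'' to ``$V$ is a viscosity solution of $V''+\frac{\muGp}{r}V'=0$''. This is not an equivalence, because the equation is degenerate elliptic. Concretely, for a radial test function $\varphi$ touching $u$ from above, the subsolution condition $\lambda(A^\varphi(x_0))\notin\Gamma$ is the disjunction ``$\lambda_\theta^\varphi\le0$ \emph{or} $\lambda_t^\varphi+\muGp\,\lambda_\theta^\varphi\le0$''; whenever the first alternative holds you extract no differential inequality for $\Phi$ at all. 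Your two proposed fixes do not close this. A ``small perturbation moving the test function off the degenerate locus'' cannot change $\lambda_\theta^\varphi(x_0)$, which depends only on $\varphi(x_0)$ and $\varphi'(x_0)$, and at any point where $u$ is differentiable the touching condition pins $\varphi'(x_0)$ down. Moreover, averaging a non-radial test function over the stabilizer of $x_0$ does not produce a radial function (only one invariant under that stabilizer), and the nonlinear map $\varphi\mapsto\lambda(A^\varphi)$ does not commute with averaging, so the averaged function need not ``inherit the $\partial\Gamma$-property''; fortunately that reduction is also unnecessary, since to extract the ODE one only ever needs to \emph{insert} radial test functions, not to control all test functions.

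The paper avoids the issue entirely. It first proves the classification for classical solutions by the ODE analysis, and then establishes (i) a comparison principle for viscosity sub/supersolutions (Lemma \ref{E1-1}, resting on the perturbation $(v+\frac1i e^{\delta|x|^2})^{-\frac{n-2}{2}}$ from \cite{Li07-ARMA}), and (ii) the a priori bound $0\le\ln\frac{u(c)}{u(d)}\le(n-2)\ln\frac dc$ for radial viscosity supersolutions (Lemma \ref{Lem:VisC0=>Lip}, proved by comparison with the explicit barriers $\xi_\mu$), which is precisely the solvability condition \eqref{BVPCond} for the two-point boundary value problem. Given $a<c<d<b$ one then produces a classical radial solution $\hat u$ with $\hat u(c)=u(c)$, $\hat u(d)=u(d)$ (Corollary \ref{Lem:TheBVP}) and concludes $u\equiv\hat u$ on $(c,d)$ by the comparison principle; smoothness and the explicit form follow. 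If you want to salvage your route, you would need to replace the unproven viscosity equivalence by some mechanism that rules out the degenerate alternative $\lambda_\theta\le 0$ and forces $u$ to coincide with a classical solution; comparison with explicit radial barriers, as in Lemma \ref{Lem:VisC0=>Lip}, is the natural such mechanism, and at that point you have essentially rejoined the paper's argument.
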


An immediate consequence of Theorem \ref{RadVisClfn} and \cite[Theorem 1.18]{Li09-CPAM} is:
\begin{corollary}
Assume that $\Gamma$ satisfies \eqref{2}, \eqref{3}, \eqref{AxisOnBdry}
 and $ \muGp \geq 1$. If $u \in C^{0,1}_{\rm loc}(B_1 \setminus \{0\})$ is a positive viscosity solution of \eqref{liouville2} in $B_1 \setminus \{0\}$ and if $u$ is locally bounded near the origin, then $u$ is constant.
\end{corollary}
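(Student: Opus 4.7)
The plan is to exploit the explicit radial classification supplied by Theorem \ref{RadVisClfn} after first reducing to the radial case via \cite[Theorem 1.18]{Li09-CPAM}; the whole proof is then routine.

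First I apply \cite[Theorem 1.18]{Li09-CPAM}, the radial symmetry result for locally Lipschitz viscosity solutions of $\lambda(A^u)\in\partial\Gamma$ with an isolated singularity, to conclude that $u$ depends only on $|x|$. Since $u$ is then a positive continuous radial viscosity solution on $B_1\setminus\{0\}$, Theorem \ref{RadVisClfn} forces $u$ to have one of two explicit forms: if $\muGp=1$, then $u(x)=C_1|x|^{-C_2}$ for some $C_1>0$ and $C_2\in[0,n-2]$; if $\muGp>1$, then $u(x)=(C_3|x|^{-\muGp+1}+C_4)^{\frac{n-2}{\muGp-1}}$ for some $C_3,C_4\ge 0$ with $C_3+C_4>0$. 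The local-boundedness hypothesis, combined with $\muGp\ge 1$, then pins down the constants: in the $\muGp=1$ case the factor $|x|^{-C_2}$ blows up at the origin unless $C_2=0$, leaving $u\equiv C_1$; in the $\muGp>1$ case the exponent $-\muGp+1$ is strictly negative, so $|x|^{-\muGp+1}\to\infty$ as $x\to 0$ and boundedness forces $C_3=0$, leaving $u\equiv C_4^{\frac{n-2}{\muGp-1}}$. Either way $u$ is a positive constant.

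The only substantive step is the appeal to \cite[Theorem 1.18]{Li09-CPAM} for radial symmetry; should the cited theorem require the global domain $\RR^n\setminus\{0\}$, the natural remedy is to first use Theorems \ref{BocherSmall} and \ref{BocherMiddle}—whose hypotheses are met because local boundedness forces $a=0$ (resp.\ $\alpha=0$)—to extend $u$ to a positive H\"older continuous function on $B_1$, and then to invoke a local version of the moving plane argument of \cite{Li09-CPAM} applied to the extension. Once the radial symmetry is in hand, the remainder is pure algebra, so the entire difficulty of the corollary is absorbed into these two cited ingredients.
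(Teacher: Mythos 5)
Your main line of argument is precisely the one the paper intends: the corollary is presented there as an immediate consequence of Theorem~\ref{RadVisClfn} together with the radial symmetry theorem \cite[Theorem 1.18]{Li09-CPAM}, and your reduction to the explicit radial profiles followed by the observation that boundedness near the origin forces $C_2=0$ (when $\muGp=1$) or $C_3=0$ (when $\muGp>1$) is the entire content of that deduction. On the substantive steps you and the paper coincide.

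The caveat you raise yourself is, however, the real crux, and your proposed remedy for it does not work. The symmetry result \cite[Theorem 1.18]{Li09-CPAM} is a statement about solutions on all of $\RR^n\setminus\{0\}$ (this is how the present paper describes it in the introduction and how it is invoked in the proof of Theorem~\ref{BocherGen}), and there is no ``local version of the moving plane argument'' producing radial symmetry on a punctured ball. Indeed, with the domain $B_1\setminus\{0\}$ taken literally the conclusion of the corollary fails: for any $x_0\notin \bar B_1$ the function $u(x)=|x-x_0|^{2-n}$ has $A^u\equiv 0$, hence $\lambda(A^u)\in\partial\Gamma$, and it is smooth, positive, bounded near the origin, and non-constant on $B_1$. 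So the statement must be read with $\RR^n\setminus\{0\}$ in place of $B_1\setminus\{0\}$; on that domain your first paragraph is a complete and correct proof, and the fallback via Theorems~\ref{BocherSmall} and~\ref{BocherMiddle} plus a localized moving-plane step should be discarded rather than patched.
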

Note that in the above, $u$ is not assumed to be a priori radial.

Last but not least, we have the following asymptotics for isolated singularities of \eqref{liouville2} when \eqref{AxisOnBdry} is not assumed.

\begin{theorem}\label{BocherGen}
Assume that $\Gamma$ satisfies \eqref{2} and \eqref{3}. Let $u \in C^{0,1}_{\rm loc}(B_1 \setminus \{0\})$ be a positive viscosity solution of \eqref{liouville2} in $B_1 \setminus \{0\}$. Then 
\[
\lim_{|x| \rightarrow 0} |x|^{n-2}\,u(x) = a \in [0,\infty).
\]
\end{theorem}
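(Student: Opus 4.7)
My plan is to split according to whether \eqref{AxisOnBdry} holds. If it does, Theorem \ref{BocherGen} is an immediate corollary of Theorems \ref{BocherSmall}, \ref{BocherMiddle}, and \ref{BocherLarge} --- each provides an asymptotic expansion from which $\lim_{|x|\to 0}|x|^{n-2}u(x)$ can be read off. For example, when $\muGp > 1$, the expansion $u^{(\muGp-1)/(n-2)} = a^{(\muGp-1)/(n-2)}|x|^{-\muGp+1} + \ringw$ with $\ringw$ locally bounded gives
\[
|x|^{n-2}u(x) = \bigl(a^{(\muGp-1)/(n-2)} + |x|^{\muGp-1}\ringw(x)\bigr)^{(n-2)/(\muGp-1)} \longrightarrow a \quad \text{as }|x|\to 0,
\]
and the cases $\muGp = 1$ and $\muGp < 1$ are similarly immediate. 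The genuinely new content is the case when $(1,0,\ldots,0)$ lies in the \emph{interior} of $\Gamma$, and the rest of the plan concerns that case.

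The two structural facts I rely on in the interior case are: first, the strong maximum principle, the Hopf lemma, and hence a comparison principle in annular domains hold for $\lambda(A^\cdot)\in\bar\Gamma$ (see the discussion after \eqref{AxisOnBdry}); second, $w_c(x) := c|x|^{-(n-2)}$ is a classical solution of $\lambda(A^u)\in\partial\Gamma$ for every $c > 0$, since its conformal metric is the flat metric pulled back by inversion, so $A^{w_c}\equiv 0$. Setting $\phi(r) := r^{n-2}\max_{|x|=r}u$ and $\psi(r) := r^{n-2}\min_{|x|=r}u$, the comparison principle in an annulus $\{r_1 \leq |x| \leq r_2\}$ applied with $w_c$ where $c := \max(\phi(r_1),\phi(r_2))$ yields $\phi(r) \leq \max(\phi(r_1),\phi(r_2))$ for $r\in[r_1,r_2]$. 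Thus $\phi$ is quasi-convex, hence unimodal on $(0,1)$, and $\lim_{r\to 0^+}\phi(r) =: a$ exists in $[0,\infty]$. A symmetric argument yields $\lim_{r\to 0^+}\psi(r) =: b$ with $b \leq a$.

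To see $a = b$ and to bound $a$ below $\infty$, I perform a blowup: set $v_k(y) := u(r_k y)/u(r_k e_1)$ with $r_k \to 0$. This rescaling preserves the equation and normalizes $v_k(e_1) = 1$. By Theorem \ref{DegGradEst}, $v_k$ satisfies uniform Harnack and $\log$-Lipschitz estimates on every compact subset of $\mathbb{R}^n\setminus\{0\}$, so after a diagonal Arzel\`a--Ascoli extraction $v_k \to v$ locally uniformly on $\mathbb{R}^n\setminus\{0\}$, where $v$ is a positive locally Lipschitz viscosity solution of $\lambda(A^v)\in\partial\Gamma$. By \cite[Theorem 1.18]{Li09-CPAM}, $v$ is radially symmetric, and Theorems \ref{RadVisClfnRes} and \ref{RadVisClfn} identify $v$ with an explicit member of the classified radial family. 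Radial symmetry of $v$ makes the oscillation of $v_k$ on each sphere $\partial B_t$ vanish in the limit, which translates back into $\phi(r_k t) - \psi(r_k t) = o(r_k^{n-2}u(r_k e_1))$, forcing $a = b$ whenever $a < \infty$. To rule out $a = \infty$, one inspects the explicit classification: in every branch, $|y|^{n-2}v(y)$ is bounded on compacta of $(0,\infty)$, whereas $\phi(r_k) \to \infty$ combined with the quasi-convexity of $\phi$ forces $|y|^{n-2}v(y)$ to blow up as $|y|\to 0$, a contradiction.

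The principal obstacle is the last contradiction: implementing the blowup at the scale $r_k$ with $\phi(r_k)\to\infty$ against the classification, one must choose the normalization carefully and argue cleanly across each branch of Theorems \ref{RadVisClfnRes}/\ref{RadVisClfn}, particularly the degenerate branches in which $v$ itself is a multiple of $|y|^{-(n-2)}$. Alongside this, the first step requires a rigorous deployment of the annular comparison principle for the \emph{degenerate} viscosity equation $\lambda(A^\cdot)\in\partial\Gamma$; this is where the hypothesis that \eqref{AxisOnBdry} fails is crucial, since it is what delivers the strong maximum principle and Hopf lemma underpinning the comparison.
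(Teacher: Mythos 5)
Your reduction of the case where \eqref{AxisOnBdry} holds to Theorems \ref{BocherSmall}, \ref{BocherMiddle} and \ref{BocherLarge} is circular: in the paper those theorems are proved \emph{after}, and by means of, Theorem \ref{BocherGen}. For instance, the proof of \eqref{X14} in Theorem \ref{BocherSmall} invokes Theorem \ref{BocherGen} to produce the small ball $B_\delta$ on which $v^-_{\epsilon,r} < u < v^+_{\epsilon,r}$, and Theorem \ref{BocherMiddle} rests on Lemma \ref{MiddleAsymp}, which again cites Theorem \ref{BocherGen}. So the half of the statement you dismiss as ``immediate'' is exactly the half that must be proved from scratch, and as written your proposal does not prove it.

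The repair is already contained in your own text: the direct argument you reserve for the case \eqref{AxisNotOnBdry} in fact needs neither \eqref{AxisNotOnBdry} nor the strong maximum principle or Hopf lemma. The only comparison you actually use is the weak comparison of $u$ against the $C^2$ radial functions $c\,|x|^{2-n}$ (for which $A^{c|x|^{2-n}}\equiv 0\in\partial\Gamma$) on annuli, and that is Lemma \ref{E1-1}, valid for every $\Gamma$ satisfying \eqref{2} and \eqref{3}; run the argument once, without the case split, and you are done. Two further points. First, the ``principal obstacle'' you identify (ruling out $a=\infty$ by a branch-by-branch inspection of the radial classification) is a non-issue: quasi-concavity of $\psi$ makes it monotone near $0$, so $b=\lim_{r\to 0}\psi(r)<\infty$, and Theorem \ref{DegGradEst} applied to $u(r\,\cdot)$ on $\{1/2<|y|<2\}$ gives $\phi(r)\le C(\Gamma)\,\psi(r)$, whence $a\le C b<\infty$ with no recourse to the classification. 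Second, your blow-up step is then exactly the paper's: the rescaled solutions converge (via the Harnack/gradient estimates of Theorem \ref{DegGradEst}) to a solution on $\RR^n\setminus\{0\}$ which is radially symmetric by \cite[Theorem 1.18]{Li09-CPAM}, so $\max_{\partial B_{r_k}}u-\min_{\partial B_{r_k}}u=o\big(\min_{\partial B_{r_k}}u\big)$ and hence $a=b$. The paper organizes this slightly differently (it works with $v(r)=\min_{\partial B_r}u$, gets existence and finiteness of $\lim r^{n-2}v(r)$ from super-harmonicity and comparison with radial harmonic functions, and does not need your quasi-convexity observation for $\phi$), but the substance is the same.
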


The rest of the paper is organized as follows. We start with a study of radially symmetric solutions and super-solutions of \eqref{liouville2} in Section \ref{sec:Clfn}. The key result of this section is a Theorem \ref{RadVisClfn}, which is more general
than  Theorem \ref{RadVisClfnRes}. Also in this section, we exhibit certain monotonicity properties which are used later on. In Section \ref{sec:GradEst}, we prove Theorem \ref{DegGradEst}. Proofs of the B\^ocher-type theorems are presented in Section \ref{sec:Bocher}.

\section{Radially symmetric solutions and supersolutions}\label{sec:Clfn}

For a smooth radially symmetric function $u$, $\lambda(A^u)$ will take the form $(V,v, \ldots, v)$ for some $V$ and $v$. Thus, in studying radially symmetric solutions of \eqref{liouville2}, it is important to see which vectors of the above forms lie on $\partial\Gamma$. By homogeneity, it suffices to see which of 
\[
(\lambda_1, 1, \ldots, 1), \qquad (1, 0, \ldots, 0),  \qquad (\lambda_1, -1, \ldots, -1)
\]
belong to $\partial\Gamma$. In this respect, the constant $\muGp$ defined in \eqref{muGamma} and the condition \eqref{AxisOnBdry} come naturally into our discussion. Recall that $\muGp$ is well-defined thanks to \eqref{2} and \eqref{3}. If \eqref{AxisOnBdry} is satisfied, i.e. $(1, 0, \ldots, 0) \in \partial\Gamma$, no vector of the form $(\lambda_1, -1, \ldots, -1)$ belongs to $\bar\Gamma$. Conversely, if \eqref{AxisOnBdry} fails, i.e.
\begin{equation}
(1, 0, \ldots, 0) \notin \partial\Gamma,
	\label{AxisNotOnBdry}
\end{equation}
then there is a unique $(\lambda, -1, \ldots, - 1)$ on $\partial\Gamma$. We thus define
\begin{equation}
\left\{\begin{array}{ll}
\muGm = +\infty &\text{if \eqref{AxisOnBdry} holds},\\
\muGm \in [n-1,\infty) &\parbox[t]{.55\textwidth}{ is the number such that $(\muGm, -1, \ldots, -1) \in \partial\Gamma$ if \eqref{AxisOnBdry} 
does not hold.}
\end{array}\right.
	\label{muGammaminus}
\end{equation}

The following lemma, whose proof can be found in Appendix \ref{AppL21}, gives some basic properties of $\mu_\Gamma^\pm$.
\begin{lemma}\label{Lemma21}
Assume that $\Gamma$ satisfies \eqref{2} and \eqref{3}. Then
\begin{enumerate}[(a)]
  \item $\muGp$ and $\muGm$ are monotone in $\Gamma$.
  \item $\muGp = n-1$ (or $\muGm = n-1$) if and only if $\Gamma = \Gamma_1$.
  \item $\muGp = 0$ if and only if $\Gamma  = \Gamma_n$.
  \item $\muGp$ and $\muGm$ satisfy
\[
(n-2) + \frac{n-1}{\muGp} \leq \muGm \leq \left\{\begin{array}{l}\frac{n-1}{\muGp - (n-2)} \text{ if } \muGp > n - 2,\\\infty \text{ otherwise.}\end{array}\right.
\]
  \item if \eqref{AxisOnBdry} holds then $0 \leq \muGp \leq n - 2$.
\end{enumerate}
\end{lemma}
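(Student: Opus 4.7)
The plan is to prove parts (a) through (e) in order, with (d) as the centerpiece: both its inequalities and the constraint in (e) follow from two explicit symmetric sums of permutations of the boundary vectors $(-\muGp,1,\ldots,1)$ and $(\muGm,-1,\ldots,-1)$. For (a), I argue directly from the definitions: if $\Gamma\subset\Gamma'$, then $(-\muGp,1,\ldots,1)\in\bar\Gamma\subset\bar\Gamma'$, so the exit parameter along the ray $t\mapsto(-t,1,\ldots,1)$ for $\Gamma$ does not exceed that for $\Gamma'$; hence $\muGp$ is monotone non-decreasing in $\Gamma$. For $\muGm$, one first notes that finiteness of $\muGm$ for $\Gamma$ is equivalent to $(1,0,\ldots,0)\in\Gamma$ (i.e.\ failure of \eqref{AxisOnBdry}), which is preserved on enlarging $\Gamma$; when both cones yield finite values, the analogous ray argument with $t\mapsto(t,-1,\ldots,-1)$ shows $\muGm$ is monotone non-increasing.

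The forward directions of (b) and (c) are immediate from the defining inequalities of $\Gamma_1$ and $\Gamma_n$. For the reverse of (b): if $\muGp=n-1$, the $n$ permutations $v_i$ of $(-(n-1),1,\ldots,1)$ lie in $\partial\Gamma$ by symmetry, and solving $\lambda=\sum_i c_i v_i$ with $c_i\ge 0$ shows the cone they generate equals the hyperplane $\{\sum\lambda_j=0\}$; decomposing any $\lambda$ with $\sum\lambda_j>0$ as $(\lambda-\bar\lambda\vec{1})+\bar\lambda\vec{1}$ (first term on the hyperplane, second in the positive cone) gives $\bar\Gamma\supset\bar\Gamma_1$, and the reverse inclusion is \eqref{3}. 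The case $\muGm=n-1$ is parallel. For the reverse of (c), I use supporting hyperplanes: at $(0,1,\ldots,1)\in\partial\Gamma$, a supporting normal $\nu$ satisfies $\nu\cdot(0,1,\ldots,1)=0$ (the ray lies in the cone) and $\nu\ge 0$ componentwise (the positive cone lies in $\bar\Gamma$), which forces $\nu=e_1$; by symmetry $\bar\Gamma\subset\bigcap_i\{\lambda_i\ge 0\}=\bar\Gamma_n$.

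The heart of the lemma is (d). Let $p_+^{i}$ and $p_-^{i}$ be the permutations placing $-\muGp$ and $\muGm$ respectively in position $i$; both lie on $\partial\Gamma$ by symmetry. A direct computation gives
\[
\sum_{i=2}^{n}p_+^{i}=\bigl(n-1,\,n-2-\muGp,\ldots,n-2-\muGp\bigr)\in\bar\Gamma,
\qquad
\sum_{i=2}^{n}p_-^{i}=\bigl(-(n-1),\,\muGm-(n-2),\ldots,\muGm-(n-2)\bigr)\in\bar\Gamma.
\]
When $\muGp>n-2$, the last $n-1$ coordinates of the first sum are strictly negative, so rescaling by their absolute value places $\bigl((n-1)/(\muGp-(n-2)),-1,\ldots,-1\bigr)\in\bar\Gamma$, and the definition of $\muGm$ yields the upper bound $\muGm\le(n-1)/(\muGp-(n-2))$. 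Since $\muGm\ge n-1$, the last $n-1$ coordinates of the second sum are always strictly positive; rescaling gives $\bigl(-(n-1)/(\muGm-(n-2)),1,\ldots,1\bigr)\in\bar\Gamma$, and the definition of $\muGp$ delivers $\muGp\ge(n-1)/(\muGm-(n-2))$, which is equivalent to the lower bound in (d). Part (e) is then immediate: if $\muGp>n-2$, the upper bound forces $\muGm$ to be finite, contradicting $\muGm=+\infty$ under \eqref{AxisOnBdry}.

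The main subtlety I anticipate is justifying the step that $\muGm=+\infty$ under \eqref{AxisOnBdry} genuinely rules out every $(x,-1,\ldots,-1)$ from $\bar\Gamma$ — this is what the rescaling step in (d), and hence also the derivation of (e), implicitly requires. I would record this as a short preliminary observation via the same supporting-hyperplane argument as in the reverse direction of (c): under \eqref{AxisOnBdry}, any supporting normal $\nu$ to $\bar\Gamma$ at $e_1$ has $\nu_1=0$ and $\nu\ge 0$ componentwise, so $\nu\cdot(x,-1,\ldots,-1)=-\sum_{i\ge 2}\nu_i\le 0$ would force $\nu=0$ if that vector were in $\bar\Gamma$, a contradiction.
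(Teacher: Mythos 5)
Your proposal is correct, and all the computations check out (the permutation sums, the rescalings, and the algebraic equivalences between the two forms of the bounds in (d)). It does, however, take a different route from the paper's. The paper proves Lemma \ref{Lemma21} by first establishing a structural proposition in Appendix B that explicitly identifies the smallest and largest cones $L\Gamma^\pm(\mu)$ and $U\Gamma^\pm(\mu)$ with a prescribed value of $\mu_\Gamma^\pm$; part (d) is then read off from the inclusions $(-\muGp,1,\ldots,1)\in\bar\Gamma\subset\overline{U\Gamma^-(\muGm)}$ and $(\muGm,-1,\ldots,-1)\in\bar\Gamma\subset\overline{U\Gamma^+(\muGp)}$, and (c) from $U\Gamma^+(0)=\Gamma_n$. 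Your argument reaches the same inequalities by applying the underlying symmetrization device (averaging over the permutations fixing the first slot) directly to the two boundary vectors, which shortcuts the extremal-cone machinery entirely; your supporting-hyperplane proof of the reverse direction of (c) likewise replaces the computation of $U\Gamma^+(0)$. What your route buys is brevity and self-containedness for the lemma itself; what the paper's route buys is the explicit description of $U\Gamma^\pm(\mu)$, which it wants anyway to relate $\muGp$ to the $\theta$-convex cones $\Sigma_\theta$ and the cones used in earlier literature. Your treatment of (b) is essentially identical to the paper's (the cone generated by the permutations of $(-(n-1),1,\ldots,1)$ is the hyperplane $\{\sum_j\lambda_j=0\}$, hence $\bar\Gamma\supset\partial\Gamma_1$). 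Two small remarks: the "subtlety" you flag at the end --- that under \eqref{AxisOnBdry} no vector $(\lambda_1,-1,\ldots,-1)$ lies in $\bar\Gamma$ --- is already recorded in the paper as the equivalence of \eqref{AxisOnBdry} with \eqref{NoNegPart}, though your proof of it is welcome; and when you pass from "$(-s,1,\ldots,1)\in\bar\Gamma$" to "$s\leq\muGp$" you are implicitly using that $\{t:(-t,1,\ldots,1)\in\bar\Gamma\}$ is the closed ray $(-\infty,\muGp]$, which follows from convexity since the open segment from any point of $\Gamma$ to a point of $\bar\Gamma$ lies in $\Gamma$ --- worth a sentence, but not a gap.
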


Theorem \ref{RadVisClfnRes} is a special case of the following result.

\begin{theorem}\label{RadVisClfn}
Assume that $\Gamma$ satisfies \eqref{2}, \eqref{3} and $0 \leq a < b \leq \infty$. Then every radially symmetric positive viscosity solution $u$ of \eqref{liouville2} in $\{a < |x| < b\}$ is one of the following smooth solutions:
\begin{enumerate}[(a)]
\item $u(x) = C_1\,|x|^{-C_2}$ with $C_1 > 0$, $0 \leq C_2 \leq n-2$ when $\muGp = 1$,
\item $u(x) = (C_3\,|x|^{-\muGp + 1} + C_4)^{\frac{n-2}{\muGp - 1}}$ with $C_3 \geq 0$, $C_4 \geq 0$, $C_3 + C_4 > 0$ when $\muGp \neq 1$,
\item $u(x) = (C_5\,|x|^{-\muGm + 1} - C_6)^{\frac{n-2}{\muGm - 1}}$ with $C_5 > 0$, $C_6 \geq 0$, $\displaystyle\lim_{r \rightarrow b} C_5\,r^{-\muGm + 1} - C_6 \geq 0$ when $\muGm < \infty$,
\item $u(x) = (-C_7\,|x|^{-\muGm + 1} + C_8)^{\frac{n-2}{\muGm - 1}}$ with $C_7 \geq 0$, $C_8 > 0$, $-\displaystyle\lim_{r \rightarrow a} C_7\,r^{-\muGm + 1} + C_8 \geq 0$ when $\muGm < \infty$.
\end{enumerate}
\end{theorem}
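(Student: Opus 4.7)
The plan is to treat both directions of the equivalence. The sufficiency direction is a direct computation, while necessity reduces to a one-dimensional viscosity problem that is resolved by a simple linearizing substitution.

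For a smooth radial $u = u(r)$, I would first compute that $A^u$ is diagonal in the radial/tangential frame, with eigenvalues $(V, v, \ldots, v)$ (the tangential value $v$ having multiplicity $n-1$), where
\[
v = -\tfrac{2}{n-2}\,u^{-\frac{n+2}{n-2}}\,\Big(\tfrac{u'}{r} + \tfrac{(u')^2}{(n-2)\,u}\Big),\qquad V = -\tfrac{2}{n-2}\,u^{-\frac{n+2}{n-2}}\,\Big(u'' - \tfrac{(n-1)(u')^2}{(n-2)\,u}\Big).
\]
By the definitions of $\muGp,\muGm$ and Lemma~\ref{Lemma21}, a vector $(V, v, \ldots, v)\in\partial\Gamma$ must lie in one of three branches: (i) $v > 0$ with $V = -\muGp\,v$; (ii) $v = 0$ with $V \geq 0$ (only when \eqref{AxisOnBdry} holds); (iii) $v < 0$ with $V = -\muGm\,v$ (only when $\muGm < \infty$). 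For the sufficiency direction, substituting each candidate in (a)--(d) into these formulas and checking the appropriate branch shows it is a classical, hence viscosity, solution.

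For the necessity direction, let $u\in C^0(\{a<|x|<b\})$ be a positive radial viscosity solution. I would first establish a radial reduction: exploiting the rotational invariance of both $u$ and the equation, the viscosity condition on $u$ becomes the one-variable viscosity statement that $(V^u, v^u)$ lies on $\partial\Gamma'$, where $\Gamma' := \{(V, v)\in\RR^2 : (V, v, \ldots, v)\in\Gamma\}$ is an open convex cone in $\RR^2$ bounded by at most the three rays identified above. On each of the smooth branches (i) and (iii), the substitution $U = u^{(\mu - 1)/(n-2)}$ with $\mu \in \{\muGp, \muGm\}$ (or $U = \log u$ in the critical case $\mu = 1$) transforms the nonlinear relation $V + \mu v = 0$ into the linear Euler-type ODE $(r^\mu U')' = 0$; integration gives $U = \alpha + \beta\,r^{1-\mu}$ (resp.\ $U = \alpha + \beta\log r$). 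Unwinding the substitution and imposing $u > 0$ together with the required sign of $v$ on the relevant branch recovers precisely the families (a)--(d). The degenerate branch (ii) contributes no new solutions: $v \equiv 0$ collapses to $u' = 0$ or $u'/u = -(n-2)/r$, giving constants or multiples of $|x|^{-(n-2)}$, already present in (a) or (b).

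The main technical obstacle is the \emph{regularity upgrade}: a priori $u$ is merely continuous, yet the claim asserts it coincides with a smooth solution globally. I would handle this in two steps. First, on any subinterval of $(a, b)$ where $(V^u, v^u)$ stays on a single branch, the ODE for $U$ is linear, so the standard one-dimensional comparison principle applies, and the two-parameter family of explicit smooth solutions is rich enough to match any pointwise Cauchy data; $u$ is thus squeezed between smooth sub- and supersolutions from the family and must coincide with one, in particular is smooth. Second, at any candidate transition radius between distinct branches the one-sided viscosity inequalities would simultaneously force $(V^u, v^u)$ onto two distinct rays of $\partial\Gamma'$, which is possible only at $(0, 0)$; propagating this pointwise identity through the now-linear ODE forces $u$ locally to be a constant or a multiple of $|x|^{-(n-2)}$, both already listed, so no genuine transition can occur. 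Together these observations confirm that every continuous radial viscosity solution is globally one of the forms (a)--(d).
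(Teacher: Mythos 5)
Your classical-solution analysis --- the eigenvalue computation, the three branches determined by the sign of the tangential eigenvalue $v$, the linearizing substitutions $U=u^{(\mu-1)/(n-2)}$ (or $U=\ln u$), and the exclusion of branch transitions --- is essentially the paper's proof for $C^2$ solutions. The genuine gap is in your regularity upgrade. For a merely continuous viscosity solution the quantities $V^u,v^u$ do not exist pointwise, so the decomposition of $(a,b)$ into ``subintervals where $(V^u,v^u)$ stays on a single branch'' is undefined, and the first step of your upgrade is circular: one can only speak of branches after knowing $u$ is $C^2$. Moreover, the claim that the explicit two-parameter family is rich enough to match arbitrary data is false in the relevant two-point form when \eqref{AxisOnBdry} holds: by Corollary \ref{Lem:TheBVP}, the boundary value problem $u|_{\partial B_c}=\alpha$, $u|_{\partial B_d}=\beta$ is solvable within the family if and only if $0\le\ln(\alpha/\beta)\le(n-2)\ln(d/c)$. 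So before you can squeeze $u$ between members of the family you must first prove that the values $u(c),u(d)$ of the viscosity solution satisfy this compatibility condition --- and that is a statement about the viscosity solution itself, not about the ODE.

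The paper closes both holes as follows. Lemma \ref{Lem:VisC0=>Lip} shows, purely from the viscosity super-solution property and by comparison against the auxiliary sub-solutions $\xi_\mu$ (whose tangential eigenvalue is strictly negative, hence $\lambda(A^{\xi_\mu})\in\RR^n\setminus\bar\Gamma$ under \eqref{AxisOnBdry}), that $u$ is non-increasing and $|x|^{n-2}u$ is non-decreasing; this is exactly the solvability condition \eqref{BVPCond}. Then Corollary \ref{Lem:TheBVP} (or Corollary \ref{Lem:TheBVPEx} under \eqref{AxisNotOnBdry}) produces a smooth radial solution $\hat u$ with $\hat u(c)=u(c)$, $\hat u(d)=u(d)$, and the comparison principle of Lemma \ref{E1-1} --- proved via the perturbation $v_i=(v+i^{-1}e^{\delta|x|^2})^{-(n-2)/2}$ from \cite{Li07-ARMA}, not via a one-dimensional ODE argument that presupposes the branch structure --- gives $u\equiv\hat u$ on $(c,d)$, hence smoothness. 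To repair your argument you need these two ingredients (an a priori monotonicity estimate for radial viscosity supersolutions, and a genuine viscosity-versus-classical comparison principle); your proposed one-dimensional reduction and branch-by-branch ODE discussion cannot serve as the starting point.
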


\begin{remark} Assume that $\Gamma$ satisfies \eqref{2}, \eqref{3}, \eqref{AxisOnBdry}, and  $0 < b < \infty$. By the above theorem, the only positive radially symmetric $C^2$ solutions of \eqref{liouville2} in the ball $\{|x| < b\}$ are constants. If one has in addition that $ \muGp \geq 1$, then the only \emph{bounded} positive radially symmetric $C^2$ solutions of \eqref{liouville2} in the punctured ball $\{0 < |x| < b\}$ are constants.
\end{remark}

We first give the

\begin{proof}[Proof of Theorem \ref{RadVisClfn} for classical solutions.]
Let $r = |x|$ and
\[
\hat A^u  = \frac{n-2}{2}\,u^{\frac{2n}{n-2}}\,A^u = -u\,\nabla^2 u + \frac{n}{n-2}\,\nabla u \otimes \nabla u - \frac{1}{n-2}\,|\nabla u|^2\,I.
\]
Since $u$ is radially symmetric, the eigenvalues of $\hat A^u$ are 
\begin{itemize}
  \item $V := -u\,u'' + \frac{n-1}{n-2}\,(u')^2$, which is simple,
  \item and $v:= -\frac{1}{r}\,u\,u' - \frac{1}{n-2}\,(u')^2$, which has multiplicity $n-1$.
\end{itemize}
Thus, by \eqref{muGamma} and \eqref{muGammaminus}, for each $r \in (a,b)$, 
\begin{align*}
&\text{either }v(r) = 0,\\
&\text{or } v(r) > 0 \text{ and }V(r) + \muGp\,v(r) = 0,\\
&\text{or } v(r) < 0, \muGm < \infty \text{ and }V(r) + \muGm\,v(r) = 0.
\end{align*}

\medskip
\noindent\underline{Case 1:} There holds
\begin{equation}
v = -\frac{1}{r}\,u\,u' - \frac{1}{n-2}\,(u')^2 = - \frac{1}{n-2}\,u\,u'\,[\ln(r^{n-2}\,u)]' = 0 \text{ in } (a,b).
	\label{Jan25-E2}
\end{equation}
Solutions to $\eqref{Jan25-E2}$ are $u \equiv C_0$ or $u \equiv \hat C_0\,r^{2-n}$. In particular, $V \equiv 0$ and hence $A^u \equiv 0$ in $(a,b)$.

\medskip
\noindent\underline{Case 2:} $v$ is positive somewhere in $(a,b)$. Let $(c,d)$ be a maximal open subinterval of $(a,b)$ on which $v$ is positive. Then, in the interval $(c,d)$,
\begin{equation}
\left\{\begin{array}{l}
v > 0,\\
V + \muGp\,v = -u\,u'' - \frac{\muGp}{r}\,u\,u' + \frac{n - 1 - \muGp}{n-2}\,(u')^2 = 0.
\end{array}\right.
	\label{Jan25-E3}
\end{equation}

If $\muGp = 1$, we put $u = e^w$ and obtain $w'' + \frac{1}{r}\,w' = 0$,
which gives $w = c_1 + c_2\,\ln r$. It follows that 
\begin{equation}
u = C_1\,r^{-C_2} \text{ in } (c,d) \text{ for some } C_1 > 0
	\label{Jan25-Sol1}
\end{equation}

If $\muGp \neq 1$, we introduce
\[
u = w^{\frac{n-2}{\muGp - 1}}.
\]
The second line of \eqref{Jan25-E3} becomes $w'' + \frac{\muGp}{r}\,w' = 0$,
which implies $w = C_3r^{-\muGp + 1} + C_4$ and 
\begin{equation}
u = (C_3\,r^{-\muGp + 1} + C_4)^{\frac{n-2}{\muGp - 1}} \text{ in } (c,d).
	\label{Jan25-Sol2}
\end{equation}

We next show that $(c,d) = (a,b)$. Arguing by contradiction, assume for example that $c \neq a$. By the maximality of $(c,d)$, we must have
\begin{equation}
v(c) = -\frac{1}{c}\,u(c)\,u'(c) - \frac{1}{n-2}\,(u'(c))^2 = 0.
	\label{Jan25-E2@c}
\end{equation}
Since $v \neq 0$ in $(c,d)$, we have $C_2 \neq 0$ if $\muGp = 1$ and $C_3 \neq 0$ if $\muGp \neq 1$. 
From the explicit form of $u$, it can be seen that $u'(c) \neq 0$. Thus this implies
\begin{equation}
u'(c) = -\frac{n-2}{c}\,u(c).
	\label{Jan25-E4}
\end{equation}
If $\muGp = 1$, this implies that $C_2 = n-2$ in 
\eqref{Jan25-Sol1}
 and so $v$ is identically zero in $(c,d)$, contradicting the first line of \eqref{Jan25-E3}. If $\muGp \neq 1$, this implies that $C_4 = 0$ in 
\eqref{Jan25-Sol2}, and again results a contradiction. We have thus shown that $(c,d) = (a,b)$.

A calculation shows,
in view of
\eqref{Jan25-Sol1}
and \eqref{Jan25-Sol2}, 
 that $\hat A^u$ is similar to $\diag(-\muGp\,v, v, \ldots, v)$ where
\begin{align*}
v 
	&= \left\{\begin{array}{ll}C_1^2\Big(C_2 - \frac{C_2^2}{n-2}\Big)\,r^{-2C_2 - 2} & \text{ if } \muGp = 1,\\
(n-2)\,C_3\,C_4\,u^{\frac{2(n - 1 - \muGp)}{n - 2}}\,r^{-\muGp - 1} &\text{ if } \muGp \neq 1.
\end{array}\right.
\end{align*}
The restrictions of $C_1$, $C_2$, $C_3$ and $C_4$ in (a) and (b) follow.

\medskip
\noindent\underline{Case 3:} $v$ is negative somewhere in $(a,b)$. Let $(c,d)$ be a maximal open subinterval of $(a,b)$ on which $v$ is negative. Then $\muGm < \infty$ and, in the interval $(c,d)$,
\[
\left\{\begin{array}{l}
v < 0,\\
V + \muGm\,v = -u\,u'' - \frac{\muGm}{r}\,u\,u' + \frac{n - 1 - \muGm}{n-2}\,(u')^2 = 0.
\end{array}\right.
\]
Arguing as in Case 2, we arrive at
\[
u = (\hat C_5\,r^{-\muGm + 1} + \hat C_6)^{\frac{n-2}{\muGm - 1}} \text{ in } (a,b).
\]
It follows that $\hat A^u$ is similar to $\diag(-\muGm\,v, v, \ldots, v)$ where
\[
v  = (n-2)\,\hat C_5\,\hat C_6\,u^{\frac{2(n - 1 - \muGm)}{n - 2}}\,r^{-\muGm - 1}
\]
The remaining part of the theorem follows easily from the above.
\end{proof}

Here are consequences of what we have just proved:

\begin{corollary}\label{Lem:TheBVP}
Assume that $\Gamma$ satisfies \eqref{2}, \eqref{3} and \eqref{AxisOnBdry}. For any $0 < a < b < \infty$, $\alpha > 0$ and $\beta > 0$, there exists a positive radially symmetric function $u$ in $C^2(\{a < |x| < b\}) \cap C^0(\{a \leq |x| \leq b\})$ satisfying 
\begin{equation}
\left\{\begin{array}{l}
	\lambda(A^u) \in \partial \Gamma \text{ in } \{a < |x| < b\},\\
	u|_{\partial B_a} = \alpha, \qquad u|_{\partial B_b} = \beta
\end{array}\right.
	\label{Apr11-BVP}
\end{equation}
if any only if
\begin{equation}
0 \leq \ln \frac{\alpha}{\beta} \leq (n-2)\ln\frac{b}{a}.
	\label{BVPCond}
\end{equation}
Moreover, the solution is unique.
\end{corollary}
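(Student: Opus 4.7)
The plan is to reduce the corollary directly to the classification in Theorem \ref{RadVisClfn}. Because \eqref{AxisOnBdry} holds, definition \eqref{muGammaminus} gives $\muGm = +\infty$, so cases (c) and (d) of Theorem \ref{RadVisClfn} do not occur. Any $u$ solving \eqref{Apr11-BVP} is $C^2$ (hence a classical, hence a viscosity, solution) in the open annulus, so Theorem \ref{RadVisClfn} forces $u(r) = C_1 r^{-C_2}$ with $C_1 > 0$ and $0 \le C_2 \le n-2$ when $\muGp = 1$, or $u(r) = (C_3 r^{-\muGp+1} + C_4)^{(n-2)/(\muGp - 1)}$ with $C_3, C_4 \geq 0$ and $C_3 + C_4 > 0$ when $\muGp \neq 1$. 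In either case the two boundary conditions $u(a) = \alpha$, $u(b) = \beta$ uniquely determine the free constants, which already gives uniqueness.

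For the equivalence with \eqref{BVPCond} I would compute explicitly. When $\muGp = 1$, the conditions read $\alpha = C_1 a^{-C_2}$ and $\beta = C_1 b^{-C_2}$, so $\ln(\alpha/\beta) = C_2 \ln(b/a)$, and the constraint $0 \leq C_2 \leq n-2$ is exactly \eqref{BVPCond}. When $\muGp \neq 1$, set $A = \alpha^{(\muGp - 1)/(n-2)}$, $B = \beta^{(\muGp - 1)/(n-2)}$, $P = a^{-\muGp + 1}$, $Q = b^{-\muGp + 1}$; the linear system $C_3 P + C_4 = A$, $C_3 Q + C_4 = B$ has determinant $P - Q \ne 0$ and unique solution
\[
C_3 = \frac{A - B}{P - Q}, \qquad C_4 = \frac{BP - AQ}{P - Q}.
\]
A short sign analysis, splitting on $\muGp > 1$ versus $\muGp < 1$ (which controls the signs of $P - Q$ and of the exponent $(\muGp - 1)/(n-2)$), shows that $C_3 \geq 0$ is equivalent to $\alpha \geq \beta$ and $C_4 \geq 0$ is equivalent to $a^{n-2}\alpha \leq b^{n-2}\beta$; together these are \eqref{BVPCond}. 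The requirement $C_3 + C_4 > 0$ is automatic from $A, B > 0$, and the resulting $u$ is smooth and positive on $[a,b]$ because the base $C_3 r^{-\muGp+1} + C_4$ is strictly positive there.

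The only delicate point will be bookkeeping: when $\muGp < 1$, both $P - Q$ and $(\muGp - 1)/(n-2)$ are negative, so passing between the boundary data $\alpha,\beta$ and the auxiliary quantities $A,B$ reverses inequalities, and one has to unwind this carefully to match the sign conditions on $C_3, C_4$ with \eqref{BVPCond} in both subcases. No analytic machinery beyond Theorem \ref{RadVisClfn} and elementary algebra is needed.
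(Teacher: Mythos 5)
Your proposal is correct and is exactly the paper's intended argument: the corollary is stated immediately after the classification of radial classical solutions and is presented there as a direct consequence of it (under \eqref{AxisOnBdry} one has $\muGm=\infty$, so only forms (a) and (b) of Theorem \ref{RadVisClfn} occur, and matching the two boundary values to the free constants yields existence, uniqueness, and the equivalence with \eqref{BVPCond}). Your sign bookkeeping in the two subcases $\muGp>1$ and $\muGp<1$ checks out, so nothing is missing.
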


\begin{corollary}\label{Lem:TheBVPEx}
Assume that $\Gamma$ satisfies \eqref{2}, \eqref{3} and \eqref{AxisNotOnBdry}. For any $0 < a < b < \infty$, $\alpha > 0$ and $\beta > 0$, there exists a unique positive radially symmetric function solution $u \in C^2(\{a < |x| < b\}) \cap C^0(\{a \leq |x| \leq b\})$ to \eqref{Apr11-BVP}.
\end{corollary}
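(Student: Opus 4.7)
The strategy is to invoke the classification of radially symmetric positive viscosity solutions provided by Theorem \ref{RadVisClfn}. Since \eqref{AxisNotOnBdry} holds, \eqref{muGammaminus} gives $\muGm<\infty$, so all four families (a)--(d) in Theorem \ref{RadVisClfn} are available. In contrast to Corollary \ref{Lem:TheBVP}, where the absence of families (c) and (d) forces the restriction \eqref{BVPCond}, here (c) and (d) will exactly cover the pairs $(\alpha,\beta)$ excluded by \eqref{BVPCond}. My plan is to show that for every $\alpha,\beta>0$ there is exactly one member of the union of (a)--(d) satisfying $u(a)=\alpha$ and $u(b)=\beta$.

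For existence I partition according to the ratio $\alpha/\beta$. If $\alpha\leq\beta$, I select family (d): setting $A=\alpha^{(\muGm-1)/(n-2)}$ and $B=\beta^{(\muGm-1)/(n-2)}$, the boundary conditions reduce to the linear system $-C_7 a^{1-\muGm}+C_8=A$, $-C_7 b^{1-\muGm}+C_8=B$, which is uniquely solvable (since $\muGm>1$ and $a\neq b$) and yields $C_7\geq 0$, $C_8>0$, with base $-C_7 r^{1-\muGm}+C_8$ increasing in $r$ and positive at $r=a$, hence positive throughout $[a,b]$. If $\alpha\geq\beta(b/a)^{n-2}$, I select family (c) by an analogous computation, getting $C_5>0$, $C_6\geq 0$ and a positive decreasing base on $[a,b]$. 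If $\beta\leq\alpha\leq\beta(b/a)^{n-2}$, I use family (a) when $\muGp=1$ and family (b) when $\muGp\neq 1$; in each case the linear system for $(C_1,C_2)$ or $(C_3,C_4)$ has a unique solution, and the non-negativity constraints $0\leq C_2\leq n-2$ or $C_3,C_4\geq 0$ turn out to be exactly equivalent to the assumed range of $\alpha/\beta$.

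For uniqueness, Theorem \ref{RadVisClfn} asserts that any positive radially symmetric viscosity solution in $\{a<|x|<b\}$ is already one of (a)--(d), and within each family the parameters are uniquely determined by $(u(a),u(b))$ because the associated linear systems are non-degenerate. The three ranges of $\alpha/\beta$ assigned above are disjoint in their interiors, and at the boundary values $\alpha=\beta$ and $\alpha=\beta(b/a)^{n-2}$ the two adjacent families degenerate to the same function -- a positive constant and a multiple of $|x|^{2-n}$ respectively -- so each pair $(\alpha,\beta)$ is realized by exactly one $u$.

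The main obstacle is the algebraic bookkeeping in the existence step: one must check that the non-negativity constraints in Theorem \ref{RadVisClfn} translate exactly to the stated range of $\alpha/\beta$ in each family. This is most delicate for family (b), where the exponent $(n-2)/(\muGp-1)$ switches sign between $\muGp>1$ and $\muGp<1$ and hence flips inequalities when one passes between $u$ and $u^{(\muGp-1)/(n-2)}$; nevertheless the two sub-cases both yield the same final range $[1,(b/a)^{n-2}]$. Beyond Theorem \ref{RadVisClfn} no additional machinery is required.
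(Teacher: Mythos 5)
Your argument is correct and is essentially the route the paper intends: Corollary \ref{Lem:TheBVPEx} is presented as an immediate consequence of the explicit classification in Theorem \ref{RadVisClfn}, and your case analysis over the ratio $\alpha/\beta$ — family (d) for $\alpha\le\beta$, family (a) or (b) for $\beta\le\alpha\le\beta(b/a)^{n-2}$, family (c) for $\alpha\ge\beta(b/a)^{n-2}$, with the adjacent families agreeing (constants, respectively multiples of $|x|^{2-n}$) at the transition values — supplies the required bookkeeping correctly, including the sign-flip check in family (b) when $\muGp<1$. One small precision: since the full viscosity version of Theorem \ref{RadVisClfn} is proved \emph{after} (and using) this corollary, you should invoke only its classical-solution version, which is established just beforehand and suffices here because the corollary concerns $C^2$ solutions.
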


\begin{figure}[h]
\begin{center}
\begin{tikzpicture}
\draw[->] (-.5,0) -- (4,0);
\draw[->](0,-.5) -- (0,5);
\draw (0,1) -- (4,1);
\draw plot[smooth] coordinates{(4,0.75) (3,1) (2,1.5) (1.5,2) (1,3) (0.6,5)};
\draw[dashed] (3,1) -- (3,0)
(1,3) -- (1,0)
(1,3) -- (0,3);
\draw  (3,-0.3) node {$b$}
(4,-0.3) node {$|x|$}
(1,-0.3) node {$a$}
(-0.3,3) node {$\bar\beta$}
(-0.3,5) node {$y$}
(-0.3,1) node {$\beta$}
(2.3,4) node {\small $y = \beta\,\left(\frac{b}{|x|}\right)^{n-2}$}
;
\end{tikzpicture}

\caption{\small For \eqref{Apr11-BVP} to have a solution when \eqref{AxisOnBdry} holds, $\alpha$ must satisfy $\beta \leq \alpha \leq \bar\beta$. No such restriction is need when \eqref{AxisOnBdry} does not hold.}
\end{center}
\end{figure}
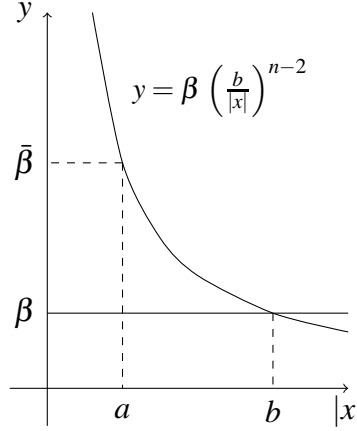

 It is clear that the proof of Theorem \ref{RadVisClfn} for classical solutions can be adapted to give a complete classification for radially symmetric classical solutions of $\sigma_k(A^u) = 0$ (without any ellipticity assumption). In this case, the solutions take the form
\[
u(x) = \left\{\begin{array}{ll}
\hat C_1\,|x|^{-\hat C_2} & \text{ if } n = 2k,\\
(\hat C_3\,|x|^{-\frac{n-2k}{k}} + \hat C_4)^{\frac{(n-2)k}{n-2k}} & \text{ if } n \neq 2k,
\end{array}\right.
\]
where the only restriction on the constants $\hat C_i \in \RR$ is such that $u > 0$ in the relevant interval. We omit the details.

In the proof of Theorem \ref{RadVisClfn} in the general case, we will use of the following comparison principle which is a consequence of a result in \cite{Li07-ARMA} on the first variation of the operator $A^u$.

\begin{lemma}\label{E1-1}
Let $\Omega \subset \RR^n$ be a bounded open set, $\Gamma$ satisfy \eqref{2} and \eqref{3}, $u$ be a positive function in $USC(\bar\Omega)$ (resp. $LSC(\bar\Omega)$), $v$ be a positive function in $C^2(\Omega) \cap LSC(\bar\Omega)$ (resp. $C^2(\Omega) \cap USC(\bar\Omega)$) such that $\lambda(A^u) \in \RR^n \setminus \Gamma$ (resp. $\lambda(A^u) \in \bar\Gamma$) in $\Omega$ in the viscosity sense, and $\lambda(A^v) \in \bar \Gamma$ (resp. $\lambda(A^v) \in \RR^n \setminus \Gamma$) in $\Omega$. Assume that $u \leq v$ (resp. $u \geq v$) on $\partial\Omega$. Then $u \leq v$ (resp. $u \geq v$) in $\bar\Omega$. In particular, if $\lambda(A^u) \in \partial\Gamma$ in $\Omega$ in the viscosity sense, $\lambda(A^v) \in \partial\Gamma$ in $\Omega$ and
 $u = v$ on $\partial \Omega$, then $u \equiv v$ in $\Omega$.
\end{lemma}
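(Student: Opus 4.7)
I treat the first case (the parenthetical version being entirely analogous by reversing inequalities). The strategy is a conformal comparison via multiplicative touching, whose key ingredient is the scaling identity
\[
A^{Cv} = C^{-4/(n-2)} A^v \quad \text{for every positive constant } C,
\]
which follows directly from the conformal invariance built into the definition of $A^u$.

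Suppose for contradiction that $u \leq v$ fails somewhere in $\bar\Omega$. Since $u$ is USC and $v$ is a positive LSC function on $\bar\Omega$, the ratio $W := u/v$ is USC and attains its maximum $C$; since $W \leq 1$ on $\partial\Omega$, we have $C > 1$ and the maximum is attained at some $x_0 \in \Omega$. The function $\varphi := Cv \in C^2(\Omega)$ then satisfies $u \leq \varphi$ in $\Omega$ with equality at $x_0$, so the viscosity sub-solution property of $u$ gives $\lambda(A^\varphi(x_0)) \in \RR^n \setminus \Gamma$. On the other hand, the scaling identity and $\lambda(A^v(x_0)) \in \bar\Gamma$ together with the cone property of $\bar\Gamma$ yield $\lambda(A^\varphi(x_0)) = C^{-4/(n-2)}\lambda(A^v(x_0)) \in \bar\Gamma$. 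Therefore
\[
\lambda(A^\varphi(x_0)) \in \bar\Gamma \cap (\RR^n \setminus \Gamma) = \partial\Gamma,
\]
so in particular $\lambda(A^v(x_0)) \in \partial\Gamma$. This is only a partial contradiction.

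To promote it to a genuine contradiction, I would invoke the first-variation computation of $A^u$ in \cite{Li07-ARMA} to replace $v$ by a strict super-solution. Concretely, I would construct, for every small $\varepsilon > 0$, a $C^2$ function $v_\varepsilon$ on $\bar\Omega$ with $\lambda(A^{v_\varepsilon}(x)) \in \Gamma$ (the open cone) for all $x \in \Omega$, with $v_\varepsilon \geq v$ on $\partial\Omega$, and with $v_\varepsilon \to v$ uniformly on $\bar\Omega$ as $\varepsilon \to 0$. Such a $v_\varepsilon$ should arise as $v + \varepsilon \eta$, where $\eta$ is chosen so that the linearization $L_v \eta := \frac{d}{dt}\big|_{t=0} A^{v + t\eta}$ shifts $\lambda(A^v)$ into the interior of $\Gamma$ pointwise; the explicit form of $L_v$ in \cite{Li07-ARMA} makes such a choice of $\eta$ possible (for example, $\eta$ should be strictly concave, so that the top-order piece $-\tfrac{2}{n-2}v^{-(n+2)/(n-2)}\nabla^2\eta$ of $L_v\eta$ is a positive multiple of the identity, a direction that points into $\Gamma$).

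Once $v_\varepsilon$ is in hand, repeating the touching argument with $v_\varepsilon$ in place of $v$ gives the desired contradiction: for $\varepsilon$ small, $C_\varepsilon := \max_{\bar\Omega}(u/v_\varepsilon) > 1$ is still attained at some interior $x_{0,\varepsilon}$, viscosity gives $\lambda(A^{C_\varepsilon v_\varepsilon}(x_{0,\varepsilon})) \in \RR^n \setminus \Gamma$, while scaling combined with the strict ellipticity of $v_\varepsilon$ now gives $\lambda(A^{C_\varepsilon v_\varepsilon}(x_{0,\varepsilon})) = C_\varepsilon^{-4/(n-2)} \lambda(A^{v_\varepsilon}(x_{0,\varepsilon})) \in \Gamma$, directly contradicting the viscosity conclusion. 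The principal technical obstacle is precisely the construction of $v_\varepsilon$---identifying a direction of perturbation that makes the degenerate operator $A^\cdot$ strictly elliptic on $\partial\Gamma$---which is exactly the content of the first-variation result in \cite{Li07-ARMA}. For the ``in particular'' statement, I would apply the comparison twice with the roles of sub- and super-solution interchanged: once (with $u$ viscosity sub, $v$ classical super) to obtain $u \leq v$, and once (with $u$ viscosity super, $v$ classical sub, using $\partial\Gamma \subset \bar\Gamma$ and $\partial\Gamma \subset \RR^n \setminus \Gamma$) to obtain $u \geq v$, whence $u \equiv v$.
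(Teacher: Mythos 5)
Your proposal is correct and follows essentially the same route as the paper: perturb $v$ to a strict supersolution with $\lambda(A^{v_\varepsilon})\in\Gamma$ via the first-variation result of \cite{Li07-ARMA} (the paper invokes \cite[Lemma 3.7]{Li07-ARMA}, perturbing the conformal factor as $v_i=\big(v^{-\frac{2}{n-2}}+\tfrac1i e^{\delta|x|^2}\big)^{-\frac{n-2}{2}}$ rather than adding a concave function to $v$ itself, which sidesteps the lower-order terms your heuristic glosses over), and then run the multiplicative touching argument using the homogeneity of $A$ under constant multiples. The paper phrases the touching via the smallest constant $\beta_i$ with $\beta_i v_i\ge u$ instead of $\max_{\bar\Omega}(u/v_i)$, which is the same device.
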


\begin{proof} To prove the first part, let
\[
v_i(x) = \Big(v(x) + \frac{1}{i}\,e^{\delta\,|x|^2}\Big)^{-\frac{n-2}{2}}, i = 1, 2, \ldots
\]
By \cite[Lemma 3.7]{Li07-ARMA}, for some small $\delta > 0$ and for all $i$,
\begin{equation}
\lambda(A^{v_i}) \in \Gamma \text{ in } \Omega.
	\label{E3-1}
\end{equation}

It follows from the assumptions on $u$ and $v$ that $\inf_{\bar\Omega} v > 0$, $\sup_{\bar\Omega} u < \infty$ and $u \leq v$ on $\partial\Omega$. Let $\beta_i$ be the smallest number such that $\beta_i\,v_i \geq u$ on $\bar\Omega$. If $\limsup_{i \rightarrow \infty} \beta_i \leq 1$, then, since $v_i \rightarrow v$ uniformly on $\bar\Omega$, $v \geq u$ in $\bar\Omega$ as desired. Otherwise, along a subsequence, $\beta_i \rightarrow \bar\beta > 1$. We have $\beta_i\,v_i(x_i) = u(x_i)$ for some $x_i \in \bar\Omega$. Since $v \geq u$ on $\partial\Omega$ and $v_i \rightarrow v$ on $\bar\Omega$, we know that $x_i \in \Omega$. It follows, taking $\beta_i\,v_i$ as a test function, $\lambda(A^{\beta_i\,v_i}(x_i)) \in \RR^n \setminus \Gamma$, i.e. $\lambda(A^{v_i}(x_i) \in \RR^n\setminus \Gamma$, violating \eqref{E3-1}. This completes the proof of the first part of Lemma \ref{E1-1}. The proof of the second part is the same.
\end{proof}

The following estimate for viscosity super-solutions of \eqref{liouville2} can be viewed as a generalization of \eqref{BVPCond}.

\begin{lemma}\label{Lem:VisC0=>Lip}
Assume that $\Gamma$ satisfies \eqref{2}, \eqref{3} and \eqref{AxisOnBdry}. For $0 \leq a < b \leq \infty$, let $u \in LSC(\{a < |x| < b\})$ be positive, radially symmetric and satisfy $\lambda(A^u) \in \bar\Gamma$ in $\{a < |x| < b\}$ in the viscosity sense. Then $u$ is non-increasing and $|x|^{n-2}\,u$ is non-decreasing in $|x|$, i.e. for $a < c < d < b$,
\begin{equation}
0 \leq \ln \frac{u(c)}{u(d)} \leq (n-2)\ln\frac{d}{c}.
	\label{SuperSolLipBnd}
\end{equation}
In particular, $u$ is locally Lipschitz in $\{a < |x| < b\}$.
\end{lemma}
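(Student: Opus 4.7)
My plan is to reduce the viscosity supersolution condition to a pointwise slope constraint on one-dimensional radial test functions, and then to invoke standard 1-D viscosity theory to extract the monotonicity. The reduction step: given any $C^2$ test function $\varphi$ touching $u$ from below at $x_0 = r_0\,e_1$, the radialized function $\tilde\varphi(x) := \varphi(|x|\,e_1)$ is $C^2$ in a neighborhood of $x_0$ (since $x_0 \ne 0$), is genuinely radial, and still touches $u$ from below at $x_0$ thanks to the radial invariance of $u$. Hence it suffices to test the supersolution condition with radial functions, parameterized by their 1-D profile $\eta(r) := \varphi(r\,e_1)$.

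For such radial $\varphi$, the computation done in the classical case of Theorem \ref{RadVisClfn} shows that the eigenvalues of $A^\varphi(x_0)$ are, up to a positive scalar, $(V, v, \ldots, v)$ with
\[
 v = -\tfrac{\eta(r_0)\,\eta'(r_0)}{r_0} - \tfrac{(\eta'(r_0))^2}{n-2}
\]
of multiplicity $n-1$. The supersolution condition $\lambda(A^\varphi(x_0)) \in \bar\Gamma$, combined with \eqref{AxisOnBdry}---which via \eqref{NoNegPart} excludes from $\bar\Gamma$ any tuple of this form with $v < 0$---forces $v(r_0) \ge 0$. Writing this as a quadratic inequality $p\bigl(p + \tfrac{(n-2)u(r_0)}{r_0}\bigr) \le 0$ in $p := \eta'(r_0)$ and using $\eta(r_0) = u(r_0) > 0$ yields the key slope bound
\[
 -\tfrac{(n-2)\,u(r_0)}{r_0} \,\le\, \eta'(r_0) \,\le\, 0.
\]

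The upper inequality exhibits $u(r)$ as a 1-D LSC viscosity supersolution of $u' \le 0$; the lower inequality, via the substitution $W(r) := r^{n-2} u(r)$ and $\psi(r) := r^{n-2}\eta(r)$, becomes $\psi'(r_0) \ge 0$, identifying $W$ as an LSC viscosity supersolution of $W' \ge 0$. By the standard 1-D viscosity fact that an LSC function whose test-from-below slopes all satisfy the appropriate sign condition is classically monotone, $u$ is non-increasing and $r^{n-2}u$ is non-decreasing, which is precisely \eqref{SuperSolLipBnd}. Local Lipschitz continuity on any compact $[c,d] \subset (a,b)$ then follows from the two monotonicity properties, via the estimate $0 \le u(r_1) - u(r_2) \le u(r_1)\bigl[1 - (r_1/r_2)^{n-2}\bigr]$ combined with the smoothness of $r \mapsto r^{n-2}$ on $[c,d]$. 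The principal technical point is the 1-D viscosity monotonicity lemma for merely LSC functions: if $u(r_1) < u(r_2)$, the linear interpolant $L$ between the two endpoints provides (after a small quadratic perturbation to convert a boundary minimum of $u - L$ into an interior two-sided touch, as permitted by LSC) a valid $C^2$ test function from below with strictly positive slope, contradicting the slope bound.
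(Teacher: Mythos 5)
Your argument is correct in its essentials but follows a genuinely different route from the paper. The paper proves \eqref{SuperSolLipBnd} by constructing explicit radial sub-solution barriers $\xi_\mu$, $\hat\xi_\mu$ with the same boundary values as $u$ at $c$ and $d$, invoking the comparison principle of Lemma \ref{E1-1} (which rests on the first-variation lemma of \cite{Li07-ARMA}), and letting the parameter $\mu\to\infty$. You instead test the viscosity condition directly with radial test functions: the crucial observation that the $(n-1)$-fold eigenvalue $v=-\tfrac{\eta\eta'}{r}-\tfrac{(\eta')^2}{n-2}$ contains no second derivative, together with \eqref{NoNegPart}, reduces the supersolution property to the two-sided first-order slope bound $-\tfrac{(n-2)u(r_0)}{r_0}\le\eta'(r_0)\le 0$, and the two monotonicity statements then follow from one-dimensional viscosity theory. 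This is more elementary and self-contained (no comparison principle, no limit of barriers), and it isolates exactly where hypothesis \eqref{AxisOnBdry} enters; the paper's route, by contrast, is uniform with the machinery used throughout the rest of the article. Note also that the direction of the radialization you actually need is the easy one: given a one-dimensional profile $\eta$ touching $u(r)$ from below at $r_0$, the function $x\mapsto\eta(|x|)$ is an admissible $C^2$ test function at $r_0e_1$; you do not need to reduce arbitrary test functions to radial ones.

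The one step you should tighten is the $1$-D lemma that an LSC function all of whose test-from-below slopes are $\le 0$ is classically non-increasing. Your parenthetical fix for the boundary-minimum case does not work as literally stated: if $u\ge L$ on $[r_1,r_2]$ with equality only at the endpoints, subtracting a small quadratic $\epsilon(r-r_1)^2$ from $L$ only makes $u-L_\epsilon$ larger away from $r_1$ and leaves the minimum at $r_1$; and a $C^1$ function with positive slope cannot in general be made to touch an LSC function from below at $r_1$ in a two-sided way (LSC gives no linear rate from the left). The standard repair is a barrier that blows up to $-\infty$ at a point $a'<r_1$: with $\eta_\epsilon(r)=u(r_1)+\tfrac{m}{2}(r-r_1)-\tfrac{\epsilon}{r-a'}$ one checks that for small $\epsilon$ the minimum of $u-\eta_\epsilon$ over $(a',r_2]$ is attained at an interior point (it cannot be at $r_2$ because the value there exceeds the value at $r_1$, and the blow-up excludes $a'$), and $\eta_\epsilon'>0$ there gives the contradiction. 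With this (entirely standard) substitution, your proof is complete.
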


\begin{proof} Let
\[
m := \frac{\ln u(c) - \ln u(d)}{\ln d - \ln c}.
\]

We first show the second half of the estimate: $m \leq n - 2$. Assume otherwise that $m = (n-2) + \epsilon$ for some $\epsilon > 0$. Define for $\mu > 1$,
\[
\xi_\mu(x) = \xi_\mu(r) = u(c)\,\frac{c^{n-2}}{r^{n-2}}\,\exp\Big[\frac{-\epsilon(\ln r - \ln c)^\mu}{(\ln d - \ln c)^{\mu - 1}}\Big].
\]
It is easy to see that $\xi_\mu(c) = u(c)$ and $\xi_\mu(d) = u(d)$. Note that $A^{\xi_\mu}$ has two eigenvalues $\lambda_1$ of multiplicity one and $\lambda_2$ has multiplicity $(n-1)$. A direct computation using the explicit formula for $\xi_\mu$ shows that
\[
\lambda_2 = -\frac{1}{r}\xi_\mu\,\xi_\mu' - \frac{1}{n-2}\,(\xi_\mu')^2 < 0 \text{ in } (a,b).
\]
In view of \eqref{NoNegPart}, this implies $\lambda(A^{\xi_\mu}) \in \RR^n \setminus \bar \Gamma$.

Now, $u$ is a super-solution while $\xi_\mu$ is a sub-solution of \eqref{liouville2} and both have the same boundary values. By Lemma \ref{E1-1}, $u \geq \xi_\mu$. Sending $\mu \rightarrow \infty$ results in
\[
u \geq u(c)\,\frac{c^{n-2}}{r^{n-2}} \text{ in } (c,d),
\]
which contradicts the assumption that $m > n-2$.

The first half of the conclusion that $m \geq 0$ can be shown similarly. Assume otherwise that this was wrong. Then the function
\[
\hat \xi_\mu = u(d)\,\exp\Big[\frac{m(\ln d - \ln r)^\mu}{(\ln d - \ln c)^{\mu - 1}}\Big]
\]
is a sub-solution of \eqref{liouville2} which has the same boundary values as $u$. Thus, by Lemma \ref{E1-1}, $u \geq \hat \xi_\mu$ in $(c,d)$ which leads to a contradiction when we send $\mu \rightarrow \infty$.
\end{proof}

We are now in a position to give the

\begin{proof}[Proof of Theorem \ref{RadVisClfn}.] It suffices to show that $u$ is a classical solution. For any $a < c < d < b$ there exists a smooth positive radially symmetric solution $\hat u$ of \eqref{liouville2} such that $\hat u(c) = u(c)$ and $\hat u(d) = u(d)$: This is a consequence of Lemma \ref{Lem:VisC0=>Lip} and Corollary \ref{Lem:TheBVP} in case \eqref{AxisOnBdry} holds, or of Lemma \ref{Lem:TheBVPEx} in case \eqref{AxisNotOnBdry} holds. By Lemma \ref{E1-1}, $u \equiv \hat u$ in $(c,d)$. Since $\hat u$ is smooth, so is $u$. The conclusion follows.
\end{proof}

As mentioned in the introduction, the strong maximum principle fails for solutions of \eqref{liouville2} when \eqref{AxisOnBdry} holds. The next result recovers a strong maximum principle statement in the radially symmetric setting. 

\begin{lemma}\label{SupSolvsSol}
Assume that $\Gamma$ satisfies \eqref{2} and \eqref{3}. For $0 \leq a < b \leq \infty$, let $u \in C^0(\{a < |x| < b\})$ and $\bar u \in LSC(\{a < |x| < b\})$ be positive, radially symmetric and satisfy respectively $\lambda(A^u) \in \partial\Gamma$ and $\lambda(A^{\bar u}) \in \bar\Gamma$ in $\{a < |x| < b\}$ in the viscosity sense. Assume that $u \leq \bar u$ in $\{a < |x| < b\}$. Then
\[
\text{either } u < \bar u \text{ in } \{a < |x| < b\} \text{ or } u \equiv u \text{ in } \{a < |x| < b\}.
\]
\end{lemma}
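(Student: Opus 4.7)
The plan is to prove the dichotomy via a connectedness argument applied to the coincidence set $E := \{r \in (a,b) : u(r) = \bar u(r)\}$ (viewed in the radial variable). Closedness is immediate: since $u$ is continuous and $\bar u$ is LSC, $\bar u - u$ is LSC on $(a,b)$, so $\{\bar u - u \leq 0\}$ is closed, and under the standing assumption $u \leq \bar u$ this set is exactly $E$. The task therefore reduces to showing $E$ is open in $(a,b)$.

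Fix $r_0 \in E$ and pick any $a < c < r_0 < d < b$; the goal is to show $u \equiv \bar u$ on $\{c < |x| < d\}$ (shrinking $c, d$ toward $r_0$ then yields openness of $E$). First, insert an auxiliary smooth radial classical solution $\tilde u$ of $\lambda(A^{\tilde u}) \in \partial\Gamma$ on $\{c < |x| < d\}$ with $\tilde u(c) = \bar u(c)$ and $\tilde u(d) = \bar u(d)$. Existence is provided by Corollary \ref{Lem:TheBVP} when \eqref{AxisOnBdry} holds --- the admissibility condition \eqref{BVPCond} is satisfied because Lemma \ref{Lem:VisC0=>Lip} applies to the radial super-solution $\bar u$ --- and by Corollary \ref{Lem:TheBVPEx} when \eqref{AxisNotOnBdry} holds. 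Next apply Lemma \ref{E1-1} twice, once treating $\tilde u$ as a super-solution against the sub-solution $u$ (boundary inequality $u \leq \bar u = \tilde u$), and once treating $\tilde u$ as a sub-solution against the super-solution $\bar u$ (boundary equality $\tilde u = \bar u$), to obtain the sandwich $u \leq \tilde u \leq \bar u$ on $\{c < |x| < d\}$. Evaluated at $r_0$, this collapses to $\tilde u(r_0) = u(r_0)$.

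The crux is then to upgrade $u \leq \tilde u$ with interior contact at $r_0$ to $u \equiv \tilde u$ on $[c,d]$; granted this, $\bar u(c) = \tilde u(c) = u(c)$ and $\bar u(d) = u(d)$, and since $c, d$ are arbitrary, openness of $E$ follows. From $u \leq \tilde u$ with contact at $r_0$ we get $u'(r_0) = \tilde u'(r_0)$, hence the tangential eigenvalue $v$ of $\hat A$ (as in the proof of Theorem \ref{RadVisClfn}) satisfies $v_u(r_0) = v_{\tilde u}(r_0)$. When this common value is nonzero, the constraint $\lambda(A^\cdot) \in \partial\Gamma$ selects a fixed branch ($V + \muGp v = 0$ or $V + \muGm v = 0$), which is a smooth nondegenerate second-order ODE for the radial profile; ODE uniqueness then yields $u \equiv \tilde u$ near $r_0$, and since Theorem \ref{RadVisClfn} gives each branch as an explicit two-parameter family on all of $(a,b)$, the coincidence propagates to $[c,d]$. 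The main obstacle is the degenerate case $v_u(r_0) = 0$, where the branch constraint is indeterminate and the linearized ODE degenerates; this is handled by Theorem \ref{RadVisClfn} globally, since inspection of the explicit forms (a)--(d) shows that $v$ has a fixed sign on the entire interval of definition of any radial classical solution. Consequently $v_u(r_0) = 0$ forces $v_u \equiv 0$ on $(a,b)$, and similarly $v_{\tilde u} \equiv 0$, so both $u$ and $\tilde u$ lie in the union of the families of constants and of pure powers $C r^{2-n}$; matching $(u(r_0), u'(r_0))$ then pins them down and gives $u \equiv \tilde u$.
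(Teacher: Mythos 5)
Your proof is correct, and it reaches the dichotomy by a genuinely different mechanism than the paper's, though from the same toolbox. The paper argues by contradiction: assuming $u(c) < \bar u(c)$ and $u(d) = \bar u(d)$, it first shows $u \equiv \bar u$ on $\{d \leq |x| < b\}$ via a multiplicative perturbation in Lemma \ref{E1-1}, then inserts a classical radial solution with the \emph{strictly intermediate} boundary value $\frac{1}{2}[u(c)+\bar u(c)]$ at $c$ and matching value at some $\bar d > d$; Lemma \ref{E1-1} puts this solution below $\bar u$, while the explicit formulas of Theorem \ref{RadVisClfn} force it strictly above $u$ on the open annulus, giving $u(d) < \bar u(d)$, a contradiction. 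You instead work directly from a contact point $r_0$: your auxiliary solution $\tilde u$ matches $\bar u$ at both ends, the two applications of Lemma \ref{E1-1} pinch $\tilde u$ against $u$ at $r_0$, and you convert the resulting interior one-sided tangency of two classical radial solutions into identity. That tangency step is sound: first derivatives agree at the contact point, hence so does the tangential eigenvalue $v$; when it is nonzero both profiles lie on the same nondegenerate branch ODE and the explicit two-parameter families propagate the coincidence, and when it vanishes your observation that $v$ then vanishes identically (so both profiles are constants or multiples of $|x|^{2-n}$) closes the degenerate case. Where the paper exploits strict ordering of the solution family in its boundary data, you exploit its rigidity under interior tangency; both rest on Theorem \ref{RadVisClfn}, Corollaries \ref{Lem:TheBVP}/\ref{Lem:TheBVPEx} with admissibility supplied by Lemma \ref{Lem:VisC0=>Lip}, and Lemma \ref{E1-1}. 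One small remark on bookkeeping: the sandwich yields $u = \tilde u \leq \bar u$ on $(c,d)$, with equality against $\bar u$ guaranteed only at $c$ and $d$; so what you actually prove is that every $c < r_0$ and $d > r_0$ lies in the coincidence set, which immediately gives $E = (a,b)$ and makes the openness-plus-connectedness framing unnecessary.
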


\begin{proof} Suppose the contrary, then for some $c,d \in (a,b)$, $u(c) < \bar u(c)$ and $u(d) = \bar u(d)$. We may assume that $c < d$; the other case can be proved similarly. According to Theorem \ref{RadVisClfn}, $u$ is smooth (and takes some specific form).

We first observe that
\[
u \equiv \bar u \text{ in } \{d \leq |x| < b\}.
\]
The reason is that if $u(\bar r) < \bar u(\bar r)$ for some $d < \bar r < b$, we can apply Lemma \ref{E1-1} on $\{c < |x| < \bar r\}$ to obtain, for small $\epsilon > 0$, $(1 +\epsilon) u \leq \bar u$ in $\{c < |x| < \bar r\}$, violating $u(d) = \bar u(d)$.

Fix a $\bar d \in (d,b)$ and let $\alpha = \frac{1}{2}[u(c) + \bar u(c)]$. If \eqref{AxisOnBdry} holds, an application of Lemma \ref{Lem:VisC0=>Lip} to both $u$ and $\bar u$ gives
\[
0 < \ln \frac{u(c)}{u(\bar d)} < \ln \frac{\alpha}{u(\bar d)} < \ln\frac{\bar u(c)}{\bar u(\bar d)} \leq (n-2)\ln\frac{\bar d}{c},
\]
and hence, by Corollary \ref{Lem:TheBVP}, there exists a unique smooth radially symmetric solution $v$ of \eqref{liouville2} in $\{c < |x| < \bar d\}$ satisfying $v(c) = \alpha$ and $v(\bar d) = u(\bar d) = \bar u(\bar d)$. If \eqref{AxisNotOnBdry} holds, the existence of $v$ is assured by Lemma \ref{Lem:TheBVPEx}. By Lemma \ref{E1-1}, $v \leq \bar u$ on $\{c < |x| < \bar d\}$. On the other, since $u(c) < v(c)$ and $u(\bar d) = v(\bar d)$, we have, in view of the explicit form of radial solutions given by Theorem \ref{RadVisClfn}, $u < v$ in $\{c < |x| < \bar d\}$. Thus, $u(d) < v(d) \leq \bar u(d)$, a contradiction.
\end{proof}

A consequence is the following comparison type result, which will be used later.

\begin{corollary}\label{Shooting}
Assume that $\Gamma$ satisfies \eqref{2} and \eqref{3}. For $0 \leq a < b < \infty$, let $u \in C^0(\{a \leq |x| \leq b\}), \bar u \in LSC(\{a \leq |x| \leq b\})$ be positive, radially symmetric and satisfy respectively $\lambda(A^u) \in \partial\Gamma$ and $\lambda(A^{\bar u}) \in \bar\Gamma$ in $\{a < |x| < b\}$ in the viscosity sense. Assume that $u|_{\partial B_b} \leq  \bar u|_{\partial B_b}$ and $u|_{\partial B_d} \geq \bar u|_{\partial B_d}$ for some $a < d < b$, then 
\[
\bar u \leq u \text{ in } \{a < |x| < d\}.
\]
\end{corollary}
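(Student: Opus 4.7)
The plan is a sliding (multiplicative) argument by contradiction, exploiting the fact that $tu$ remains a viscosity solution of $\lambda(A^{tu})\in\partial\Gamma$ for every $t>0$. Indeed, the identity $A^{tu}=t^{-4/(n-2)}A^u$ shows that positive rescalings of $u$ preserve the cone condition on the eigenvalues. By Theorem \ref{RadVisClfn}, the radial viscosity solution $u$ is in fact smooth on the open annulus, which will be convenient. Suppose, for contradiction, that $\bar u(r_0)>u(r_0)$ for some $r_0\in(a,d)$; throughout, I write radial functions as functions of $r=|x|$.

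The key choice is to carry out the sliding on the larger interval $[r_0,b]$ rather than $[r_0,d]$, so that the hypothesis $u|_{\partial B_b}\leq\bar u|_{\partial B_b}$ is brought into play. Define
\[
t_*=\sup\{t\geq 0 : tu(r)\leq \bar u(r) \text{ for all } r\in[r_0,b]\}.
\]
Since $u$ is continuous (hence bounded above) and $\bar u$ is LSC and positive on the compact set $[r_0,b]$ (hence bounded below by a positive constant), one has $t_*>0$. The hypothesis $u(d)\geq\bar u(d)$ evaluated at $r=d$ forces $t_*\leq\bar u(d)/u(d)\leq 1$. A standard argument using the LSC of $\bar u-t_*u$ on $[r_0,b]$ shows that $t_*u\leq\bar u$ on $[r_0,b]$ with equality at some $r^*\in[r_0,b]$.

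I then rule out each possible location of $r^*$. If $r^*=r_0$, then $t_*=\bar u(r_0)/u(r_0)>1$, contradicting $t_*\leq 1$. If $r^*=b$, the hypothesis $u(b)\leq\bar u(b)$ gives $t_*=\bar u(b)/u(b)\geq 1$, forcing $t_*=1$; the constraint at $r=d$ then forces $u(d)=\bar u(d)$, providing an interior touching point to which Lemma \ref{SupSolvsSol} applies on $\{r_0<|x|<b\}$, yielding $u\equiv\bar u$ there. The continuity of $u$ gives $\lim_{r\to r_0^+}\bar u(r)=\lim_{r\to r_0^+}u(r)=u(r_0)$, and the LSC of $\bar u$ at $r_0$ then forces $\bar u(r_0)\leq u(r_0)$, contradicting $\bar u(r_0)>u(r_0)$. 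Finally, if $r^*\in(r_0,b)$, Lemma \ref{SupSolvsSol} applied directly to $t_*u$ and $\bar u$ on $\{r_0<|x|<b\}$ gives $t_*u\equiv\bar u$ there, and the same LSC limit argument yields $\bar u(r_0)\leq t_*u(r_0)\leq u(r_0)$, again a contradiction.

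The main subtlety is the bookkeeping in the $r^*=b$ subcase: the hypothesis $u|_{\partial B_b}\leq\bar u|_{\partial B_b}$ is essential to force $t_*=1$, since without it a scenario $r^*=b$ with $t_*<1$ would provide no immediate contradiction. The remaining steps are routine applications of Lemma \ref{SupSolvsSol} combined with the lower semicontinuity of $\bar u$ at $r_0$.
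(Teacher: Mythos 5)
Your proof is correct. Every step checks out: the scaling identity $A^{tu}=t^{-4/(n-2)}A^u$ does preserve the viscosity condition $\lambda(A^{tu})\in\partial\Gamma$; the existence of a contact point $r^*$ for $t_*u$ and $\bar u$ follows from lower semicontinuity on the compact interval; and the three cases for the location of $r^*$ are each disposed of correctly, with the lower semicontinuity of $\bar u$ at $r_0$ supplying the final contradiction when Lemma \ref{SupSolvsSol} forces $t_*u\equiv\bar u$ on the open annulus.

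The route is genuinely different from the paper's, which is much shorter: the paper applies the comparison principle of Lemma \ref{E1-1} on $\bar B_b\setminus B_c$ (where $c$ is the point with $u(c)<\bar u(c)$), using the boundary inequalities on $\partial B_c$ and $\partial B_b$ to get $u\le\bar u$ on the whole annulus; this forces $u(d)=\bar u(d)$ at the interior point $d$, and Lemma \ref{SupSolvsSol} then gives $u\equiv\bar u$, contradicting $u(c)<\bar u(c)$. You instead re-derive the needed comparison by hand via the multiplicative sliding, which is essentially the internal mechanism of the proof of Lemma \ref{E1-1}, but you dispose of the contact point by invoking the radial strong maximum principle (Lemma \ref{SupSolvsSol}) rather than the viscosity test-function argument with the $\frac{1}{i}e^{\delta|x|^2}$ perturbation. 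What your approach buys is self-containedness (no appeal to Lemma \ref{E1-1}) at the cost of a three-case analysis and a slightly more delicate endgame at $r_0$; what the paper's approach buys is brevity, since the contact point is pinned down at $d$ from the start by the hypothesis $u|_{\partial B_d}\ge\bar u|_{\partial B_d}$.
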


\begin{proof} Assume the contrary that $u(c) < \bar u(c)$ for some $c \in (c,d)$. According to Theorem \ref{RadVisClfn}, $u$ is a smooth function. An application of Lemma \ref{E1-1} yields
\[
\bar u \geq u \text{ on } \bar B_b \setminus B_c.
\]
In particular, $\bar u(d) \geq u(d)$. We also know from the assumption that $\bar u(d) \leq u(d)$. So we have $\bar u(d) = u(d)$. By Lemma \ref{SupSolvsSol}, we obtain $\bar u \equiv u$ on $\bar B_b \setminus B_c$, violating $u(c) < \bar u(c)$.
\end{proof}

\begin{lemma}\label{XYZ}
Assume that $\Gamma$ satisfies \eqref{2}, \eqref{3} and \eqref{AxisOnBdry}. For $0 \leq a < b \leq \infty$, let $u \in LSC(\{a < |x| < b\})$ be a positive, radially symmetric solution of $\lambda(A^u) \in \bar\Gamma$ in the viscosity sense in $\{a < |x| < b\}$. Then, for any $a < R_0 < b$, the function
\[
\Psi_{R_0}(r) = \left\{\begin{array}{ll}
	\frac{\ln u(r) - \ln u(R_0)}{\ln R_0 - \ln r} &\text{ if } \muGp = 1,\\
	\frac{u(r)^{\frac{\muGp - 1}{n-2}} - u^{\frac{\muGp - 1}{n-2}}(R_0)}{r^{-\muGp + 1} - R_0^{-\muGp + 1}} &\text{ if } \muGp \neq 1
\end{array}\right.
\]
is non-decreasing in $r$ for $r \in (a,R_0)$.
\end{lemma}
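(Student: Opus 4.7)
The plan is to compare $u$ with the explicit radial classical solutions of $\lambda(A^w)\in\partial\Gamma$ provided by Theorem~\ref{RadVisClfn}. The starting observation is that in the coordinates $(g(r),f(u))$ used to define $\Psi_{R_0}$ (namely $f=\ln,g=-\ln$ when $\muGp=1$ and $f(\cdot)=(\cdot)^{(\muGp-1)/(n-2)}, g(r)=r^{-\muGp+1}$ when $\muGp\neq 1$), the solutions in Theorem~\ref{RadVisClfn}(a)--(b) are exactly the affine functions. Consequently, for any such $w$, $\Psi_{R_0}$ applied with $w$ in place of $u$ returns the constant slope coefficient. If one can produce such a $w$ that agrees with $u$ at the endpoints, then comparison will pin down the sign of $u-w$ in between, which in turn controls $\Psi_{R_0}(u)$ against this constant slope.

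Concretely, fix $a<r_1<r_2<R_0$ in $(a,R_0)$. By Lemma~\ref{Lem:VisC0=>Lip} applied on $\{r_1\le|x|\le R_0\}$, the values $u(r_1)$ and $u(R_0)$ satisfy \eqref{BVPCond}. Corollary~\ref{Lem:TheBVP} then supplies a smooth positive radially symmetric classical solution $w$ of $\lambda(A^w)\in\partial\Gamma$ on the closed annulus $\{r_1\le|x|\le R_0\}$ with $w(r_1)=u(r_1)$ and $w(R_0)=u(R_0)$; this is the step that genuinely uses \eqref{AxisOnBdry}. Reading off the explicit form of $w$ from Theorem~\ref{RadVisClfn}(a)--(b), the two boundary conditions force the slope coefficient ($C_2$ if $\muGp=1$, $C_3$ if $\muGp\neq 1$) to coincide with $\Psi_{R_0}(r_1)$.

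Next, since $u\in LSC$ is a supersolution, $w$ is a smooth classical solution (in particular a subsolution since $\partial\Gamma\subset\RR^n\setminus\Gamma$), and $u=w$ on $\{|x|=r_1\}\cup\{|x|=R_0\}$, Lemma~\ref{E1-1} yields $u\ge w$ throughout the closed annulus. Evaluating at $r_2$ gives $u(r_2)\ge w(r_2)$, and rewriting this pointwise inequality through the coordinate changes in the definition of $\Psi_{R_0}$ produces $\Psi_{R_0}(r_2)\ge\Psi_{R_0}(r_1)$.

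The only real bookkeeping obstacle is the case distinction according to the sign of $\muGp-1$: the map $x\mapsto x^{(\muGp-1)/(n-2)}$ is increasing when $\muGp>1$ but decreasing when $\muGp<1$, and the map $r\mapsto r^{-\muGp+1}$ reverses monotonicity in the opposite sense (note also that for $\muGp=1$ the transformations $\ln$ and $-\ln$ play the analogous roles). These two sign reversals cancel in the quotient defining $\Psi_{R_0}$, so in each of the regimes $\muGp>1$, $\muGp=1$, and $\muGp<1$ one arrives at the same conclusion $\Psi_{R_0}(r_1)\le\Psi_{R_0}(r_2)$. Beyond this sign bookkeeping the translation from $u(r_2)\ge w(r_2)$ to the monotonicity of $\Psi_{R_0}$ is a one-line algebraic manipulation.
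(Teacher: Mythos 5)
Your proof is correct and follows essentially the same strategy as the paper's: compare $u$ with explicit radial solutions of the two-point boundary value problem (obtained from Lemma \ref{Lem:VisC0=>Lip} and Corollary \ref{Lem:TheBVP}), which are affine in the transformed coordinates, and read off the slope coefficient, which equals $\Psi_{R_0}$ at the anchoring radius. The only difference is that you anchor a single comparison solution at $\{r_1,R_0\}$ and apply Lemma \ref{E1-1} directly on the annulus between them to get $u\ge w$ at the intermediate radius $r_2$, whereas the paper anchors two solutions at $\{R_i,R_0\}$ and compares $u$ with $v_1$ \emph{outside} the interval $[R_1,R_0]$ via Corollary \ref{Shooting}; your variant is equally valid, and the sign bookkeeping across the regimes $\muGp>1$, $\muGp=1$, $\muGp<1$ does cancel exactly as you claim.
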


\begin{proof} Fix $a < R_2 < R_1 < R_0$. Using estimate \eqref{SuperSolLipBnd} in Lemma \ref{Lem:VisC0=>Lip} and Corollary \ref{Lem:TheBVP}, we can find uniquely two smooth radial functions $v_i \in C^\infty(B_{R_0} \setminus \{0\})$, $i = 1,2$ such that
\[
\left\{\begin{array}{l}
\lambda(A^{v_i}) \in \partial \Gamma \text{ in } B_{R_0} \setminus \{0\},\\
v_{i}(R_0) = u(R_0), v_{i}(R_i) = u(R_i).
\end{array}\right.
\]
By Corollary \ref{Shooting}, $u(R_2) \leq v_1(R_2)$. It then follows from the explicit formula for $v_1$ and $v_2$ in Theorem \ref{RadVisClfn} that
\[
v_2 \leq v_1 \text{ in } B_{R_0} \setminus \{0\}.
\]

To proceed, consider first the case where $\muGp \neq 1$. By Theorem \ref{RadVisClfn}, there exist non-negative constants $\mu_i$ and $\nu_i$ such that
\[
v_i(r) = \Big(\mu_i r^{-\muGp + 1} + \nu_i\Big)^{\frac{n-2}{\muGp - 1}}.
\]
As $v_1(R_0) = v_2(R_0) = u(R_0)$, we have
\[
\nu_i = u(R_0)^{\frac{\muGp - 1}{n-2}} - \mu_i\,R_0^{-\muGp + 1}.
\]
We thus have
\[
v_i(r) = \Big[\mu_i\Big( r^{-\muGp + 1} - R_0^{-\muGp + 1}\Big) + u(R_0)^{\frac{\muGp - 1}{n-2}}\Big]^{\frac{n-2}{\muGp - 1}}.
\]
Recalling $v_2 \leq v_1$ we thus get
\[
\mu_2 \leq \mu_1.
\]
On the other hand, as $v_i(R_i) = u(R_i)$, we have $\mu_i = \Psi_{R_0}(R_i)$ and so $\Psi_{R_0}(R_2) \leq \Psi_{R_0}(R_1)$.

Let's turn to the case where $\muGp = 1$. The argument is similar. By Theorem \ref{RadVisClfn}, there exist constants $\mu_i \in [0,n-2]$ and $\nu_i$ such that
\[
\ln v_i(r) =  - \mu_i \ln r + \nu_i.
\]
As before, this leads 
\[
\ln v_i(r) = \Psi_i(R_i)\Big(\ln R_0 - \ln r) + \ln u(R_0).
\]
Recalling $v_2 \leq v_1$, we have $\Psi_{R_0}(R_2) \leq \Psi_{R_0}(R_1)$, which finishes the proof.
\end{proof}

\section{Key gradient estimates}\label{sec:GradEst}

In this section, we prove Theorem \ref{DegGradEst}, a local gradient estimate for locally Lipschitz viscosity solutions of \eqref{liouville2}.

For a locally Lipschitz function $v$ in $B_1$, $0 < \alpha < 1$, $x \in B_1$  and $0 < \delta < 1 - |x|$, define
\[
[v]_{\alpha,\delta}(x) = \sup_{0 < |y - x| < \delta} \frac{|v(y) - v(x)|}{|y - x|^\alpha}.
\]
Note that $[v]_{\alpha,\delta}(x)$ is continuous and non-decreasing in $\delta$. Thus we can define
\[
\delta(v,x,\alpha) = \left\{\begin{array}{ll}
	\infty & \text{ if } (1 - |x|)^\alpha\,[v]_{\alpha,1 - |x|}(x) < 1,\\
	\mu & \text{ where } 0 < \mu \leq 1 - |x| \text{ and } \mu^\alpha\,[v]_{\alpha,\mu}(x) = 1\\
		& \text{ if } (1 - |x|)^\alpha\,[v]_{\alpha,1 - |x|}(x) \geq 1.
\end{array}\right.
\]
The above function $\delta(v,x,\alpha)$ was introduced in \cite{Li09-CPAM}. Its inverse $\delta(v,x,\alpha)^{-1}$ plays a similar role to $|\nabla v(x)|$ in performing a rescaling argument for a sequence of functions blowing up in $C^\alpha$-norms. In particular, if $\delta = \delta(v,x,\alpha) < \infty$, then the rescaled function $w(y) := v(x + \delta y) - v(x)$ satisfies
\[
w(0) = 0 \text{ and } [w]_{\alpha,1}(0) = \delta^\alpha[v]_{\alpha,\delta}(x) = 1.
\]

We start the proof in a special case.

\begin{lemma}\label{DegGradEstSp}
Let $u$ be as in Theorem \ref{DegGradEst}. There exists $C = C(n)$ such that 
\[
|\nabla\ln u| \leq C(n)\left[\frac{\sup_{B_{3/4}} u}{\inf_{B_{3/4}} u}\right]^{\frac{1}{n-2}} \qquad a.e. \ \text{ in } B_{1/2}
\]
\end{lemma}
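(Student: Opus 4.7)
The plan is to argue by contradiction via a rescaling (blow-up) argument that exploits the conformal invariance of $\lambda(A^\cdot)$ under dilations, together with the classification of radial viscosity solutions in Theorem \ref{RadVisClfn} and the radial monotonicity of supersolutions in Lemma \ref{Lem:VisC0=>Lip}. Since $|\nabla \ln u|$ is only defined almost everywhere, the role of the pointwise gradient will be played by the inverse scale $\delta(\ln u, x, \alpha)^{-1}$ introduced at the start of this section.

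Fixing some $\alpha \in (0,1)$, I would begin by assuming the estimate fails and extracting a sequence of cones $\Gamma^{(k)}$ satisfying \eqref{2}--\eqref{3}, positive viscosity solutions $u_k \in C^{0,1}(B_1)$ of $\lambda(A^{u_k}) \in \partial \Gamma^{(k)}$, and points $y_k \in B_{1/2}$ for which
$$\delta(\ln u_k, y_k, \alpha)^{-1} \cdot M_k^{-1/(n-2)} \longrightarrow \infty, \qquad M_k := \frac{\sup_{B_{3/4}} u_k}{\inf_{B_{3/4}} u_k}.$$
Following the maximization technique of \cite{Li09-CPAM}, I would replace $y_k$ by a point $\bar x_k$ that nearly maximizes the weighted quantity $(3/4-|x|)^{\alpha}\,\delta(\ln u_k,x,\alpha)^{-\alpha}$ over $x \in B_{3/4}$; the vanishing of this weight on $\partial B_{3/4}$ guarantees an interior maximizer and automatically yields $C^\alpha$ bounds for the rescaled sequence on every fixed bounded ball. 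After normalizing by $u_k(\bar x_k)=1$ and setting $\delta_k=\delta(\ln u_k,\bar x_k,\alpha)$, I would introduce
$$\tilde u_k(y)=u_k(\bar x_k+\delta_k y),\qquad y\in B_{R_k},\quad R_k\to\infty.$$
Because $A^\cdot$ scales covariantly under dilations and $\Gamma^{(k)}$ is a cone, $\tilde u_k$ is again a positive viscosity solution of the same equation, with $\tilde u_k(0)=1$, $[\ln\tilde u_k]_{\alpha,1}(0)=1$, and $\sup_{B_{R_k}}\tilde u_k/\inf_{B_{R_k}}\tilde u_k\le M_k$.

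Next I would extract, along a subsequence, a limit cone $\Gamma^{(\infty)}$ and a positive $C^{0,\alpha}_{\loc}(\RR^n)$ limit $\tilde u_\infty$ which is a viscosity solution of $\lambda(A^{\tilde u_\infty})\in\partial\Gamma^{(\infty)}$ and still satisfies $[\ln\tilde u_\infty]_{\alpha,1}(0)=1$. The heart of the argument is then to prove that $\tilde u_\infty$ must be constant, contradicting this last property. If $M_k$ remains bounded, then $\tilde u_\infty$ is a bounded positive entire Lipschitz viscosity solution, and the symmetry theorem \cite[Theorem 1.18]{Li09-CPAM} combined with the classification in Theorem \ref{RadVisClfn} forces $\tilde u_\infty$ to be either constant or a singular radial profile centered at some point; Lipschitz regularity on all of $\RR^n$ rules out the singular profiles, leaving only constants. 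If $M_k\to\infty$, the ratio $R_k\cdot M_k^{-1/(n-2)}\to\infty$ combined with the radial bound \eqref{SuperSolLipBnd}, applied to the spherical extrema of $\tilde u_k$ around $0$ (which inherit the appropriate viscosity sub- and supersolution properties), still pins $\tilde u_\infty$ down on every fixed ball, and the same rigidity closes the argument.

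The main obstacle will be the rigidity step, especially in the divergent case $M_k\to\infty$: one must carefully transfer the radial monotonicity of Lemma \ref{Lem:VisC0=>Lip} to the non-radial rescaled sequence, which requires verifying that spherical minima and maxima of the $\tilde u_k$ preserve enough of the viscosity property to make \eqref{SuperSolLipBnd} applicable. A secondary technical point is matching the weight $(3/4-|x|)^{\alpha}$ in the maximization to the homogeneity of $\delta(\cdot,\cdot,\alpha)$ so that after rescaling one obtains $C^\alpha$ compactness uniformly on every bounded set, which is the standard mechanism in the framework of \cite{Li09-CPAM}.
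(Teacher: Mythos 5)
Your blow-up strategy is essentially the one the paper uses for the \emph{full} Theorem \ref{DegGradEst}, but it cannot be used to prove this particular lemma: the argument is circular at its key step. In the paper's architecture, Lemma \ref{DegGradEstSp} is precisely the ingredient that upgrades the uniform $C^{0,\alpha}$ bounds coming from the $\delta(\cdot,\cdot,\alpha)$-maximization to a uniform \emph{Lipschitz} bound on the rescaled sequence (this is \eqref{21F11-7}). That Lipschitz bound is indispensable twice in your scheme. First, the Liouville and symmetry theorems \cite[Theorems 1.4 and 1.18]{Li09-CPAM} that you invoke for the rigidity of $\tilde u_\infty$ are results about \emph{locally Lipschitz} viscosity solutions, and a limit obtained from $C^{0,\alpha}$ compactness alone does not qualify. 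Second, the normalization $[\ln\tilde u_k]_{\alpha,1}(0)=1$ does not pass to the limit under uniform convergence or convergence in $C^{0,\alpha''}$ with $\alpha''<\alpha$: the H\"older seminorm can collapse to $0$ unless one has equicontinuity in a strictly stronger norm, which is again exactly what the lemma under proof is meant to supply. There are further quantitative difficulties (extracting the precise power $M^{1/(n-2)}$ and a constant depending only on $n$ from a compactness argument over a varying family of cones and possibly unbounded ratios $M_k$), but the circularity alone is fatal.

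The paper's proof is direct and uses no compactness. Set $R=\frac14\big[\sup_{B_{3/4}}u/\inf_{B_{3/4}}u\big]^{-1/(n-2)}$. For $x\in B_{1/2}$, $0<\lambda\le R$ and $|y|=3/4$, the Kelvin transform satisfies $u_{x,\lambda}(y)\le(4R)^{n-2}\sup_{B_{3/4}}u=\inf_{B_{3/4}}u\le u(y)$; since $u_{x,\lambda}$ solves the same equation in $B_{3/4}\setminus B_\lambda(x)$ and equals $u$ on $\partial B_\lambda(x)$, the comparison principle \cite[Proposition 1.14]{Li09-CPAM} yields $u_{x,\lambda}\le u$ in $B_{3/4}\setminus B_\lambda(x)$ for all such $x$ and $\lambda$. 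The calculus lemma \cite[Lemma 2]{LiNg-arxiv} then converts this family of inequalities into $|\nabla\ln u|\le C(n)/R$ a.e.\ in $B_{1/2}$, which is the stated estimate and explains where the exponent $1/(n-2)$ comes from. If you wish to salvage an argument along your lines, you would need an independent source of Lipschitz equicontinuity for the rescaled sequence; the moving-spheres computation above is, in effect, that source.
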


\begin{proof} For $x \in B_{1/2}$, $0 < \lambda \leq R := \frac{1}{4}\left[\frac{\sup_{B_{3/4}} u}{\inf_{B_{3/4}} u}\right]^{-\frac{1}{n-2}}$ and $|y| = 3/4$, we have
\begin{align*}
u_{x,\lambda}(y) 
	:= \frac{\lambda^{n-2}}{|y - x|^{n-2}}\,u\Big(x + \frac{\lambda^2(y - x)}{|y - x|^2}\Big)
	\leq (4R)^{n-2} \sup_{B_{3/4}} u = \inf_{B_{3/4}} u \leq u(y).
\end{align*}
Also, we know that $u_{x,\lambda}$ satisfies $\lambda(A^{u_{x,\lambda}}) \in \partial\Gamma$ in $B_1 \setminus B_\lambda(x)$ in the viscosity sense. Since $u_{x,\lambda} = u$ on $\partial B_{\lambda}(x)$, we can apply \cite[Proposition 1.14]{Li09-CPAM} to obtain
\begin{equation}
u_{x,\lambda} \leq u \text{ in } B_{3/4} \setminus B_\lambda(x) \text{ for all } 0 < \lambda \leq R, |x| \leq 1/2
	.\label{21F11-6}
\end{equation}

By \cite[Lemma 2]{LiNg-arxiv}, \eqref{21F11-6} implies the gradient estimate
\[
|\nabla \ln u| \leq \frac{C(n)}{R} \qquad a.e.\ \text{ in } B_{1/2}.
\]
This concludes the proof.
\end{proof}

We now give the

\begin{proof}[Proof of Theorem \ref{DegGradEst}.] We follow the proof of Theorem 1.10 in \cite{Li09-CPAM}. Since the equation $\lambda(A^u) \in \partial\Gamma$ is invariant under scaling, it suffices to consider $\epsilon = 15/16$. We first claim that
\begin{equation}
\sup_{x \neq y \in B_{1/8}} \frac{|\ln u(x) - \ln u(y)|}{|x - y|^\alpha} \leq 
C(\Gamma, \alpha) \text{ for any } 0 < \alpha < 1
	.\label{21F11-1}
\end{equation}

Assume otherwise that \eqref{21F11-1} fails. Then, for some $0 < \alpha < 1$, we can find a sequence of positive $C^{0,1}$ functions $u_i$ in $B_1$ such that $\lambda(A^{u_i}) \in \partial \Gamma$ there but 
\[
\sup_{x \neq y \in B_{1/8}} \frac{|\ln u_i(x) - \ln u_i(y)|}{|x - y|^\alpha} \rightarrow \infty.
\]
This implies that, for any fixed $0 < r < 3/4$, 
\[
\sup_{x \in B_{1/8}} [\ln u_i]_{\alpha,r}(x) \rightarrow \infty,
\]
which consequently implies
\[
\inf_{x \in B_{1/8}} \delta(\ln u_i, x, \alpha) \rightarrow 0.
\]
It follows that for some $x_i \in B_{3/4}$, 
\[
\frac{3/4 - |x_i|}{\delta(\ln u_i, x_i, \alpha)} > \frac{1}{2}\sup_{x \in B_{3/4}} \frac{3/4 - |x|}{\delta(\ln u_i, x,\alpha)} \rightarrow \infty.
\]
Let $\sigma_i = \frac{3/4 - |x_i|}{2}$ and $\epsilon_i = \delta(\ln u_i, x_i, \alpha)$. Then
\begin{equation}
\frac{\sigma_i}{\epsilon_i} \rightarrow \infty, \epsilon_i \rightarrow 0,  \text{ and } \epsilon_i \leq 4\,\delta(\ln u_i,z,\alpha) \text{ for any } |z - x_i| \leq \sigma_i
	.\label{21F11-1x}
\end{equation}

We now define
\[
v_i(y) = \frac{1}{u_i(x_i)}\,u_i(x_i + \epsilon_i\,y) \text{ for } |y| \leq \frac{\sigma_i}{\epsilon_i}.
\]
Then
\begin{equation}
[\ln v_i]_{\alpha,1}(0) = \epsilon_i^\alpha\,[\ln u_i]_{\alpha,\epsilon_i}(x_i) = 1
	.\label{21F11-2}
\end{equation}
Also, by \eqref{21F11-1x}, for any fixed $\beta > 1$ and $|y| < \beta$, there holds
\begin{align}
[\ln v_i]_{\alpha,1}(y) 
	&= \epsilon_i^\alpha\,[\ln u_i]_{\alpha,\epsilon_i}(x_i + \epsilon_i\,y)\nonumber\\
	&\leq 4^{-\alpha}\,\Big\{3 \sup_{|z - (x_i + \epsilon_i y)| \leq \epsilon_i} \epsilon_i^\alpha\,[\ln u_i]_{\alpha,\epsilon_i/4}(z) + \epsilon_i^\alpha\,[\ln u_i]_{\alpha,\epsilon_i/4}(x_i + \epsilon_i\,y)\Big\}\nonumber\\
	&\leq  3\sup_{|z - (x_i + \epsilon_i y)| \leq \epsilon_i} \delta(\ln u_i,z,\alpha)^\alpha\,[\ln u_i]_{\alpha,\delta(\ln u_i,z,\alpha)}(z)\nonumber\\
		&\qquad\qquad + \delta(\ln u_i,x_i + \epsilon_i\,y,\alpha)^\alpha\,[\ln u_i]_{\alpha,\delta(\ln u_i,x_i + \epsilon_i\,y,\alpha)}(x_i + \epsilon_i\,y)\nonumber\\
	&= 4
	\label{21F11-3}
\end{align}
for all sufficiently large $i$. Since $v_i(0) = 1$ by definition, we deduce from \eqref{21F11-2} and \eqref{21F11-3} that
\begin{equation}
\frac{1}{C(\beta)} \leq v_i(y) \leq C(\beta) \text{ for } |y| \leq \beta \text{ and all sufficiently large $i$}
	.\label{21F11-4}
\end{equation}

Thanks to \eqref{21F11-4}, we can apply Lemma \ref{DegGradEstSp} to obtain
\begin{equation}
|\nabla \ln v_i| \leq C(\beta) \text{ in } B_{\beta/2} \text{ for all sufficiently large $i$}.
	\label{21F11-7}
\end{equation}
Passing to a subsequence and recalling \eqref{21F11-1x} and \eqref{21F11-4}, we see that $v_i$ converges in $C^{0,\alpha'}$ ($\alpha < \alpha' < 1$) on compact subsets of $\RR^n$ to some positive, locally Lipschitz function $v_*$ which satisfies $\lambda(A^{v_*}) \in \partial \Gamma$ in the viscosity sense. By the Liouville-type theorem \cite[Theorem 1.4]{Li09-CPAM},
\[
v_* \equiv v_*(0) = \lim_{i \rightarrow \infty} v_i(0) = 1.
\]
This contradicts \eqref{21F11-2}, in view of \eqref{21F11-7} and the convergence of $v_i$ to $v_*$. We have proved \eqref{21F11-1}.

From \eqref{21F11-1}, we can find some universal constant $C > 1$ such that
\[
\frac{u(0)}{C} \leq u \leq C\,u(0) \text{ in } B_{1/8}.
\]
Applying Lemma \ref{DegGradEstSp} again we obtain the required gradient estimate in $B_{1/16}$.
\end{proof}


\section{B\^ocher-type theorems}\label{sec:Bocher}

In this section we prove the B\^ocher-type theorems stated in the introduction. We start in Subsection \ref{SSec:RemSing} by proving the regularity assertion across isolated singularities with mild growth in our B\^ocher-type theorems. Subsection \ref{SSec:RemSing} contains the proof of Theorem \ref{BocherGen}. Theorems \ref{BocherSmall}, \ref{BocherMiddle} and \ref{BocherLarge} are proved in the next three subsections. In Subsection \ref{SSec:BLExtra} we consider a case where \eqref{AxisOnBdry} does not hold. Subsection \ref{SSec:RemSing} and Subsections \ref{SSec:LeadTerm}-\ref{SSec:BLExtra} can be read independently.

\subsection{Isolated singularities with mild growth}\label{SSec:RemSing}

We will need the following removable singularity result for super-solutions of \eqref{liouville2}.

\begin{lemma}\label{SuperSolExt}
Let $\Gamma$ satisfy \eqref{2} and \eqref{3}, $u \in LSC(B_1 \setminus \{0\})$ be a positive solution of $\lambda(A^u) \in \bar \Gamma$ in $B_1 \setminus \{0\}$ in the viscosity sense. Then $u$, with $u(0) = \liminf_{x \rightarrow 0} u(x)$, is a positive function in $LSC(B_1)$ satisfying $\lambda(A^u) \in \bar\Gamma$ in $B_1$ in the viscosity sense.
\end{lemma}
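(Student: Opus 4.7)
\emph{Plan.} Since $u(0) := \liminf_{x \to 0} u(x)$ makes the extension tautologically lower semi-continuous on $B_1$, the two substantive assertions are (i) $u(0) > 0$ and (ii) the viscosity supersolution condition $\lambda(A^u) \in \bar\Gamma$ holds at $0$. For (i), comparison with constant barriers gives the first step: for each $0 < c < m := \inf_{\partial B_{1/2}} u$ (attained and positive by LSC on a compact sphere), the constant $v \equiv c$ satisfies $\lambda(A^v) = 0 \in \partial\Gamma$ and hence is a viscosity subsolution, and Lemma \ref{E1-1} applied to any connected component of $E_c := \{u < c\} \subset B_{1/2}\setminus\{0\}$ whose closure is compactly contained in the punctured ball immediately forces $u \ge c$ there, contradicting membership in $E_c$. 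Thus every component of $E_c$ has $0$ in its closure. To exclude $\liminf u = 0$ one must rule out the scenario where, for arbitrarily small $c$, $E_c$ touches the origin; this is precisely the content of \cite[Theorem 1.1]{CafLiNir11}, whose proof runs further comparisons against the conformally flat barriers $\phi_\epsilon(x) = \epsilon |x|^{-(n-2)}$ (radial solutions from Theorem \ref{RadVisClfn}, satisfying $\lambda(A^{\phi_\epsilon}) \equiv 0$) and their translates and dilates. Invoking that result gives $u(0) > 0$.

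\emph{Supersolution at the origin.} This step is a standard viscosity perturbation, now meaningful since $u(0) > 0$ renders $A^{\cdot}$ continuous in $(u, \nabla u, \nabla^2 u)$ at the limiting point. Let $\varphi \in C^2(B_1)$ touch $u$ from below at $0$, i.e.\ $\varphi(0) = u(0)$ and $u \ge \varphi$ on some $B_r$. After the familiar reductions---replace $\varphi$ by $\varphi - \eta|x|^2$ so that the touching is strict on $B_r \setminus \{0\}$ (and take $\eta \to 0$ at the end), and further absorb any positive quadratic gap into $\varphi$ (using that $\bar\Gamma$ is closed under addition of positive multiples of $(1, \ldots, 1)$, so increasing $\nabla^2 \varphi(0)$ by a positive multiple of $I$ is compatible with $\lambda(A^\varphi(0)) \in \bar\Gamma$)---I may assume that there is a sequence $y_k \to 0$ with $u(y_k) \to u(0)$ and $(u-\varphi)(y_k) = o(|y_k|^2)$. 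Pick a $C^2$ cutoff $\psi$ with $\psi(0) = 0$, $\psi \equiv 1$ on a plateau $[s_1, s_2] \subset (0,1)$, $\psi \equiv 0$ outside $[0,1]$, and form $\varphi_k(x) := \varphi(x) + \delta_k\,\psi(|x|^2/\rho_k^2)$ with $\rho_k$ chosen so that $|y_k|/\rho_k \in (\sqrt{s_1}, \sqrt{s_2})$ and $\delta_k = 2(u-\varphi)(y_k)$, so that $\delta_k \to 0$ and $\delta_k/\rho_k^2 \to 0$. Then $u - \varphi_k$ is non-negative on $\partial B_{\rho_k}$, vanishes at $0$, and is negative at $y_k$, so its minimum on $\bar B_{\rho_k}$ is attained at some interior $x_k \ne 0$. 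The shifted test function $\tilde\varphi_k := \varphi_k + [u(x_k)-\varphi_k(x_k)]$ touches $u$ from below at $x_k \in B_1 \setminus \{0\}$, so by hypothesis $\lambda(A^{\tilde\varphi_k}(x_k)) \in \bar\Gamma$. Letting $k \to \infty$, one verifies $x_k \to 0$, the additive shift $\to 0$, and the first two derivatives of $\varphi_k$ at $x_k$ converge to those of $\varphi$ at $0$; closedness of $\bar\Gamma$ and continuity of $A^{\cdot}$ then yield $\lambda(A^\varphi(0)) \in \bar\Gamma$.

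\emph{Main obstacle.} The hard step is (i). Constant-barrier comparison via Lemma \ref{E1-1} only shows that the set $\{u < c\}$ cannot have compact closure in $B_{1/2}\setminus\{0\}$; ruling out its touching the origin for every $c > 0$---which is equivalent to $\liminf_{x \to 0} u(x) > 0$---requires the subtler barrier analysis of \cite[Theorem 1.1]{CafLiNir11}. Step (ii) is, by comparison, a routine bump-perturbation once the lower bound $u(0) > 0$ is in hand.
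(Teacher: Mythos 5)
There is a genuine gap, and it sits exactly where you declare the argument ``routine.'' In your step (ii) you write that ``after the familiar reductions \dots I may assume that there is a sequence $y_k \to 0$ with $(u-\varphi)(y_k) = o(|y_k|^2)$.'' This is not a familiar reduction; it is the crux of the lemma. Nothing you have established rules out a test function $\varphi$ touching $u$ from below at $0$ with $(u-\varphi)(x) \gtrsim |x|$, i.e.\ a conical touching. If such behavior were possible, one could touch $u$ from below at $0$ by $u(0)+M|x|^2$ for arbitrarily large $M$, whose $A$-matrix at $0$ is a large negative multiple of the identity and hence far outside $\bar\Gamma$ --- so the lemma would simply be false. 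Excluding this is precisely the role of the ``lower-conical'' hypothesis in \cite[Theorem 1.1]{CafLiNir11}, and the paper supplies it as follows: taking the trace of $\lambda(A^u)\in\bar\Gamma$ (note $\tr \hat A^\varphi = -\varphi\,\Delta\varphi$) together with \eqref{3} gives $\Delta u \le 0$ in $B_1\setminus\{0\}$ in the viscosity sense; since a point has zero Newtonian capacity, $u$ is superharmonic in all of $B_1$; superharmonicity then yields both $u(0)\ge \min_{\partial B_{1/2}} u>0$ \emph{and} the lower-conical property at $0$, after which the perturbation machinery of \cite{CafLiNir11} closes the argument. Your proposal inverts the actual structure: step (i), which you call the ``main obstacle,'' is a two-line consequence of superharmonicity (no constant-barrier analysis or appeal to \cite{CafLiNir11} is needed, and invoking that theorem for positivity is in any case circular, since its hypotheses include the very lower-conical condition you have not verified), whereas step (ii) is where the cited theorem is genuinely used.

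A secondary point: your parenthetical about absorbing a ``positive quadratic gap into $\varphi$'' by increasing $\nabla^2\varphi(0)$ by a positive multiple of $I$ goes the wrong way --- because of the sign in $-u\,\nabla^2 u$, adding $+cI$ to the Hessian moves $\lambda(A^\varphi(0))$ by a \emph{negative} multiple of $(1,\dots,1)$, which is the direction that can leave $\bar\Gamma$. The correct bookkeeping is to test with $\varphi - \eta|x|^2$ and recover $\lambda(A^\varphi(0))\in\bar\Gamma$ by letting $\eta\to 0$ and using closedness of $\bar\Gamma$; this is repairable, but the main gap above is not repairable without the superharmonicity/zero-capacity input.
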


\begin{proof} It is easy to see that $u$, with $u(0) = \liminf_{x \rightarrow 0} u(x)$, is in $LSC(B_1)$. We know from $\lambda(A^u) \in \bar\Gamma$ and \eqref{3} that $\Delta u \leq 0$ in $B_1\setminus \{0\}$ in the viscosity sense. Since $\{0\}$ has zero Newtonian capacity, $\Delta u \leq 0$ in $B_1$ in the viscosity sense. Consequently, 
\[
\inf_{B_{1/2}\setminus\{0\}} u \geq \min_{\partial B_{1/2}} u > 0.
\]
In particular, $u(0) > 0$.

We have shown that $u$ is a positive function in $LSC(B_1)$ and satisfies $\Delta u \leq 0$ in $B_1$ in the viscosity sense. It follows that $u$ is lower-conical at $\{0\}$ (as defined in \cite{CafLiNir11}) : For any $\eta \in C^\infty(B_{1/2})$ and for any $\epsilon > 0$, 
\[
\inf_{x \in B_{1/2}} \Big[(u + \eta)(x) - (u + \eta)(0) - \epsilon |x|\Big] < 0.
\]
The proof of \cite[Theorem 1.1]{CafLiNir11} gives that $\lambda(A^u) \in \bar\Gamma$ in $B_1$ in the viscosity sense.
\end{proof}

\begin{proposition}\label{RemSingLarge}
Assume that $\Gamma$ satisfies \eqref{2}, \eqref{3}, and $0 \leq \muGp < 1$. Let $u \in LSC(B_1 \setminus \{0\}) \cap L^\infty_{\rm loc}(B_1 \setminus \{0\})$ be a positive function satisfying $\lambda(A^u) \in \bar\Gamma$ in $B_1 \setminus \{0\}$ in the viscosity sense and 
\[
\liminf_{|x| \rightarrow 0 } |x|^{n-2}\,u(x) = 0.
\]
Then, the function $u$ with $u(0) = \liminf_{|x| \rightarrow 0 } u(x)$ is in $C^{0,1 - \muGp}_{\rm loc}(B_1)$. Moreover,
\[
\|u^{\frac{\muGp-1}{n-2}}\|_{C^{0,\alpha}(B_{1/2})} \leq C(\Gamma)\,\max_{\partial B_{3/4}} u^{\frac{\muGp-1}{n-2}}.
\]
\end{proposition}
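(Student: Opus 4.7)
The first step is to apply Lemma \ref{SuperSolExt} to extend $u$ across the origin by setting $u(0) := \liminf_{x \to 0} u(x) > 0$, yielding a viscosity supersolution of $\lambda(A^u) \in \bar\Gamma$ on all of $B_1$. The algebraic heart of the argument is that, under the order-reversing substitution $v := u^{(\muGp-1)/(n-2)}$ (the exponent is negative since $\muGp < 1$), the bounded two-parameter family of radial classical solutions from Theorem \ref{RadVisClfn}(b),
\[
\xi_{c,d}(|x|) = \bigl(c|x|^{1-\muGp} + d\bigr)^{(n-2)/(\muGp-1)}, \quad c \geq 0,\ d > 0,
\]
becomes the affine family $v = c|x|^{1-\muGp} + d$. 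The singular radial solution $|x|^{-(n-2)}$ corresponds precisely to the degenerate case $d = 0$, which the hypothesis $\liminf_{x\to 0}|x|^{n-2}u(x)=0$ excludes; hence any radial barrier matching $u$ near the origin must lie in the bounded sub-family. Note that $1 - \muGp$ is exactly the H\"older exponent of $|x|^{1-\muGp}$, so this is the natural regularity target.

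Next I would pass to the spherical infimum $\underline u(r) := \min_{\partial B_r} u$. A standard touching argument using the lower semi-continuity of $u$ and the rotational invariance of $A$ shows that $\underline u$, viewed as a radial function, is itself a viscosity supersolution on $B_1 \setminus \{0\}$. When \eqref{AxisOnBdry} holds, Lemma \ref{Lem:VisC0=>Lip} applied to $\underline u$ gives the monotonicities that upgrade the liminf hypothesis to $\lim_{r\to 0}r^{n-2}\underline u(r)=0$. Lemma \ref{XYZ} then yields the chord-monotonicity of $\Psi_{R_0}$ for $\underline u$; applied with varying $R_0$, this is equivalent to convexity of the transformed profile $\underline u^{(\muGp-1)/(n-2)}$ in the variable $s := r^{1-\muGp}$. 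Combining this convexity with the local Lipschitz regularity of $\underline u$ away from $0$ (also from Lemma \ref{Lem:VisC0=>Lip}) controls the chord slope near $r = 3/4$, producing a Lipschitz bound for $\underline u^{(\muGp-1)/(n-2)}$ in $s$ on $[0,(1/2)^{1-\muGp}]$, hence the desired H\"older estimate of order $1-\muGp$ in $r$ for the radial profile on $B_{1/2}$. The case where \eqref{AxisOnBdry} fails is treated in parallel, replacing Corollary \ref{Lem:TheBVP} by Corollary \ref{Lem:TheBVPEx}.

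To upgrade the radial estimate on $\underline u$ to the pointwise estimate on $u$, I would, for each pair of radii $r<R$ in $(0,3/4)$, invoke Corollary \ref{Lem:TheBVP} (or Corollary \ref{Lem:TheBVPEx}) to produce a radial classical solution $\xi\in\{\xi_{c,d}\}$ matching $\underline u$ on $\partial B_r$ and $\partial B_R$, and then apply Lemma \ref{E1-1} to conclude $\xi \leq u$ on the annulus; since $\underline u \leq u$ pointwise, this gives $v \leq \xi^{(\muGp-1)/(n-2)}$, an affine upper bound in $|x|^{1-\muGp}$. Pairing this with a matching lower bound for $v$ (obtained from the radial estimate together with a Kelvin/moving-sphere deformation that reduces each center $x_0 \in B_{1/2}$ to the origin) sandwiches $v$ between two affine profiles in $|x|^{1-\muGp}$ with coefficients controlled by $\max_{\partial B_{3/4}} v$, yielding the pointwise H\"older bound of the proposition.

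\textbf{Main obstacle.} Lemma \ref{E1-1} produces comparison in only one direction — the classical solution lies below the supersolution — and so yields lower bounds on $u$ (equivalently, upper bounds on $v$) but not the reverse. The hardest step is therefore establishing the matching pointwise \emph{upper} bound on $u$ (i.e., positive pointwise lower bound on $v$) needed to form a two-sided radial envelope; the radial inequality on $\underline u$ controls $v$ only from above, and a separate mechanism (Kelvin-type symmetry of the equation, or the lower-conical structure behind Lemma \ref{SuperSolExt}) is needed to bound $u$ from above. The hypothesis $\liminf_{x\to 0}|x|^{n-2}u(x)=0$ is essential throughout, since without it the solution $u = C|x|^{-(n-2)}$ would be an admissible competitor and no H\"older estimate could hold — exactly as in the rigidity case of Theorem \ref{BocherLarge}.
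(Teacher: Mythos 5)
There is a genuine gap, and you have in fact located it yourself in your ``Main obstacle'' paragraph without resolving it. You correctly observe that Lemma \ref{E1-1} only lets you slide a radial classical solution of $\lambda(A^\xi)\in\partial\Gamma$ \emph{underneath} the supersolution $u$, hence only gives upper bounds on $w:=u^{\frac{\muGp-1}{n-2}}$, and you then declare that a ``separate mechanism'' (Kelvin symmetry, lower-conical structure) is needed to bound $u$ from above. No such mechanism exists in the paper's proof, and none is needed. The missing idea is a symmetrization in the \emph{pair of points}: for each $\bar x\in B_{1/2}$, center the barrier at $\bar x$ and normalize it to touch $u$ from below at its own center, i.e.\ take $\xi_{\bar x}(x)=c\bigl(4^{1-\muGp}|x-\bar x|^{1-\muGp}+b\bigr)^{\frac{n-2}{\muGp-1}}$ with $c=\min_{\partial B_{3/4}}u$ and $b$ chosen so that $\xi_{\bar x}(\bar x)=u(\bar x)$. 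One checks $\xi_{\bar x}\le c\le u$ on $\partial B_{3/4}$ and, by lower semicontinuity of $u$, that $(1-\epsilon)\xi_{\bar x}\le u$ near $\bar x$; Lemma \ref{E1-1} on $B_{3/4}\setminus B_\delta(\bar x)$ then gives $u\ge\xi_{\bar x}$ in $B_{3/4}$, i.e.\ the \emph{one-sided} inequality $w(x)-w(\bar x)\le 4^{1-\muGp}\bigl(\max_{\partial B_{3/4}}w\bigr)\,|x-\bar x|^{1-\muGp}$ for all $x$. Since this holds for every choice of center $\bar x\in B_{1/2}$, exchanging the roles of $x$ and $\bar x$ immediately yields the two-sided H\"older bound. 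Your plan instead routes everything through the spherical infimum centered at the origin and then tries to transport the estimate to other centers by a Kelvin/moving-sphere deformation, which is both unnecessary and unjustified as written; the whole point is that the translated radial barriers are already exact solutions at every center, so the origin plays no distinguished role in the H\"older estimate.

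Two secondary issues. First, Lemmas \ref{Lem:VisC0=>Lip} and \ref{XYZ} assume \eqref{AxisOnBdry}, which is \emph{not} a hypothesis of Proposition \ref{RemSingLarge}; your ``treated in parallel'' remark does not repair this, and the paper avoids these lemmas entirely here, using only superharmonicity of $\min_{\partial B_r}u$ (which forces it to be non-increasing) together with Corollary \ref{Shooting} and a single radial barrier $(C_1r^{1-\muGp}+C_2)^{\frac{n-2}{\muGp-1}}$ to show $u(0)=\liminf_{x\to0}u(x)<\infty$. Second, the convexity-in-$s=r^{1-\muGp}$ argument you sketch for the radial profile is more machinery than is needed once the barrier-at-every-center argument is in place, since that argument already covers $\bar x=0$.
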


\begin{proof} By Lemma \ref{SuperSolExt}, $\lambda(A^u) \in \bar\Gamma$ in the viscosity sense. Let $v(x) = v(|x|) = \min_{\partial B_{|x|}} u$. Then $\lambda(A^v) \in \bar\Gamma$ in $B_1$ in the viscosity sense, hence $v$ is super-harmonic. It follows that $v$ is non-increasing. Also, by the hypothesis, $\liminf_{r \rightarrow 0}r^{n-2}\,v(r) = 0$, hence there exists $0 < r_1 < 3/4$ such that
\begin{equation}
r_1^{n-2}v(r_1) < (3/4)^{n-2}\,v(3/4).
	\label{26M12-X1}
\end{equation}
Thus, since $v(r_1) \geq v(3/4)$, there exists $C_1 \geq 0$ and $C_2 > 0$ such that the function
\[
\hat v(r) = (C_1\,|x|^{-\muGp + 1} + C_2)^{\frac{n-2}{\muGp - 1}}
\]
satisfies $\hat v(r_1) = v(r_1)$ and $\hat v(3/4) = v(3/4)$. By Theorem \ref{RadVisClfn}, $\lambda(A^{\hat v}) \in \partial\Gamma$ in $B_1\setminus \{0\}$. By Corollary \ref{Shooting}, we have $v \leq \hat v$ in $(0,r_1)$. In particular, $v$ is bounded at the origin and
\[
u(0) = \liminf_{|x| \rightarrow 0} u = \liminf_{r \rightarrow 0} v(r) < \infty.
\]

By Lemma \ref{SuperSolExt}, $\lambda(A^u) \in \bar\Gamma$ in the viscosity sense. By the super-harmonicity of $u$, 
\[
c := \inf_{B_{3/4}} u = \min_{\partial B_{3/4}} u > 0.
\]

For $\bar x \in B_{1/2}$, consider
\[
\xi_{\bar x}(x) := c\Big(\frac{|x - \bar x|^{- \muGp + 1}}{4^{\muGp - 1}} + b\Big)^{\frac{n-2}{\muGp - 1}}
\]
where $b > 0$ satisfies
\begin{equation}
\xi_{\bar x}(\bar x) = c\,b^{\frac{n-2}{\muGp - 1}} = u(\bar x).
	\label{T2-0}
\end{equation}
We will show that 
\begin{equation}
u \geq \xi_{\bar x} \text{ in } B_{3/4}.
	\label{T2-1}
\end{equation}

It is easy to see that
\[
\xi_{\bar x}(x) \leq \frac{c}{4^{n-2}}\, |x - \bar x|^{2 - n} \leq c \text{ for all } x \in \partial B_{3/4}.
\]
Also, by \eqref{T2-0}, for any $0 < \epsilon < 1$, there exists $0 < \delta < \frac{1}{8}$ such that
\begin{equation}
(1 - \epsilon)\,\xi_{\bar x} \leq u \text{ in } B_\delta(\bar x).
	\label{T3-2}
\end{equation}
Since $\lambda(A^{(1 - \epsilon)\xi_{\bar x}}) \in \partial\Gamma$ in $B_{3/4}\setminus \{\bar x\}$ according to Theorem \ref{RadVisClfn} and $(1 - \epsilon)\xi_{\bar x} \leq u$ on $\partial(B_{3/4} \setminus B_\delta(\bar x))$, we can apply Lemma \ref{E1-1} to obtain
\[
(1 - \epsilon)\xi_{\bar x} \leq u \text{ in } B_{3/4} \setminus B_\delta(\bar x).
\]
Thus, in view of \eqref{T3-2},
\[
(1 - \epsilon)\xi_{\bar x} \leq u \text{ in } B_{3/4}.
\]
Sending $\epsilon \rightarrow 0$, we obtain \eqref{T2-1}.

Set
\[
w = u^{\frac{\muGp - 1}{n-2}}.
\]
We deduce from \eqref{T2-1}, in view of \eqref{T2-0}, that
\[
w(x) - w(\bar x)
	\leq \frac{|x - \bar x|^{- \muGp + 1}}{4^{\muGp - 1}}\,\max_{\partial B_{3/4}} w \text{ for all } x,\bar x \in B_{1/2}.
\]
Switching the role of $x$ and $\bar x$ we obtain
\[
|w(x) - w(\bar x)|
	\leq \frac{|x - \bar x|^{- \muGp + 1}}{4^{\muGp - 1}}\,\max_{\partial B_{3/4}} w \text{ for all } x,\bar x \in B_{1/2},
\]
which proves the result.
\end{proof}

\begin{proposition}\label{RemSing}
Assume that $\Gamma$ satisfies \eqref{2}, \eqref{3}, and $1 \leq \muGp \leq n - 1$. Let $u \in C^{0,1}_{\rm loc}(B_1 \setminus \{0\})$ be a positive viscosity solution to \eqref{liouville2} in $B_1 \setminus \{0\}$ satisfying 
\[
\liminf_{|x| \rightarrow 0 } |x|^{n-2}\,u(x) = 0.
\]
Then, for all $0 < \alpha < 1$, the function $u$ with $u(0) = \liminf_{|x| \rightarrow 0 } u(x)$ is in $C^{0,\alpha}_{\rm loc}(B_1)$. Moreover,
\[
\|u\|_{C^{0,\alpha}(B_{1/2})} \leq C(\Gamma,\alpha)\,\inf_{B_{1/2}} u.
\]
\end{proposition}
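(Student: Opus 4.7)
The plan parallels the proof of Proposition \ref{RemSingLarge} in its high-level structure --- extend $u$ across the singularity, show boundedness, then obtain H\"older regularity by comparison and Harnack --- but must cope with the fact that, for $\muGp\ge 1$, Theorem \ref{RadVisClfn} shows the only bounded radial classical solutions of \eqref{liouville2} are constants, so the direct barrier argument used there is unavailable.

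First I would apply Lemma \ref{SuperSolExt} to extend $u$, with $u(0)=\liminf_{x\to 0}u(x)$, as a positive LSC super-solution on $B_1$. Setting $v(r)=\min_{\partial B_r}u$, the rotation-invariance argument used in the proof of Proposition \ref{RemSingLarge} shows $v$ is itself a radial LSC super-solution of $\lambda(A^v)\in\bar\Gamma$, so Lemma \ref{Lem:VisC0=>Lip} yields $v$ non-increasing and $r^{n-2}v(r)$ non-decreasing. The hypothesis $\liminf|x|^{n-2}u(x)=0$ combined with this monotonicity forces $\lim_{r\to 0}r^{n-2}v(r)=0$.

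The heart of the argument is to establish $v(0^+)<\infty$; together with Harnack on spheres from Theorem \ref{DegGradEst}, this yields $\sup_{B_{3/4}}u<\infty$. I would argue by contradiction: assume $v(r)\to\infty$, pick $r_k\to 0$ with $r_k^{n-2}v(r_k)\to 0$, and rescale $\tilde u_k(y)=u(r_ky)/v(r_k)$. By Lemma \ref{Lem:VisC0=>Lip}, Theorem \ref{DegGradEst}, and the Harnack bound $V\le Cv$, the rescalings $\tilde u_k$ are locally uniformly bounded and equi-Lipschitz on compact subsets of $\RR^n\setminus\{0\}$. A subsequential $C^{0,\alpha'}_{\rm loc}$-limit $u_*$ is a positive viscosity solution of \eqref{liouville2} on $\RR^n\setminus\{0\}$, radial by the symmetry result \cite{Li09-CPAM} cited in the introduction, and hence of the explicit form given by Theorem \ref{RadVisClfnRes}. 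The contradiction then comes from combining the explicit formula for $u_*$ with the Lemma \ref{XYZ}-type monotonicity of $v$ at the scales $r_k\rho$ together with the condition $r_k^{n-2}v(r_k)\to 0$; the main obstacle is identifying the correct subsequence and showing both that $u_*$ is forced to be constant and that this is incompatible with $v(r_k)\to\infty$.

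Having established boundedness, continuity at $0$ follows from the same rescaling scheme applied to $\tilde u_r(y)=u(ry)/u(re_1)$, now with $u$ bounded, which converges to the constant $1$ in $C^{0,\alpha'}_{\rm loc}(\RR^n)$ by Liouville-type rigidity, yielding $\osc_{B_r}u\to 0$. For the H\"older estimate of any $\alpha<1$, I would combine the interior gradient bound $|\nabla u|(x)\le C\sup u/|x|$ --- obtained by applying Theorem \ref{DegGradEst} to balls $B_{|x|/2}(x)$ for $x\ne 0$ --- with a two-point argument: integrate the gradient along $[x,y]$ when $|x-y|\le|x|/2$, and use the oscillation decay from Harnack iteration combined with the scaling-limit convergence to a constant when $|x-y|>|x|/2$. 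Harnack on $B_{1/2}$ then converts $\sup u$ to $\inf u$ in the stated estimate $\|u\|_{C^{0,\alpha}(B_{1/2})}\le C(\Gamma,\alpha)\inf_{B_{1/2}}u$.
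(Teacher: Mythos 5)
Your proposal has two genuine gaps, one in the boundedness step and one in the H\"older estimate. For boundedness, the paper does not argue by blow-up at all: it constructs the explicit radial barriers $v_{\epsilon,r}$ from Theorem \ref{RadVisClfn} (namely $\bigl[\epsilon|x|^{-\muGp+1}+\sup_{\partial B_r}u^{\frac{\muGp-1}{n-2}}\bigr]^{\frac{n-2}{\muGp-1}}$ for $\muGp>1$, and $\sup_{\partial B_r}u\cdot r^\epsilon|x|^{-\epsilon}$ for $\muGp=1$), verifies that $v_{\epsilon,r}-u\to\infty$ along a sequence of spheres shrinking to the origin (this is exactly where $\liminf|x|^{n-2}u=0$ and $\muGp\ge1$ enter), and applies the comparison Lemma \ref{E1-1} on annuli to get the sharp maximum principle $\sup_{B_r}u=\max_{\partial B_r}u$. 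Your blow-up $\tilde u_k=u(r_k\cdot)/v(r_k)$ cannot replace this: the normalization discards precisely the quantities $v(r_k)\to\infty$ and $r_k^{n-2}v(r_k)\to0$ that would have to produce the contradiction, and the limit $u_*$ is \emph{not} forced to be constant --- for $\muGp=1$ the non-constant radial entire solutions $C_1|y|^{-C_2}$ with $0<C_2<n-2$ satisfy $\rho^{n-2}u_*(\rho)\to0$ and are fixed points of your rescaling, and even for $\muGp>1$ the singular solution $(C_3|y|^{1-\muGp}+C_4)^{\frac{n-2}{\muGp-1}}$ is compatible with all the monotonicity information ($\rho^{n-2}\tilde v_k(\rho)\le1$ for $\rho\le1$, $\ge1$ for $\rho\ge1$) that passes to the limit. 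You flag this yourself as ``the main obstacle,'' but it is not a technical subsequence issue; the approach as described does not close. (A side issue: Lemma \ref{Lem:VisC0=>Lip} assumes \eqref{AxisOnBdry}, which is not among the hypotheses of this proposition.)

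For the H\"older estimate, ``oscillation decay from Harnack iteration'' yields $\osc_{B_r}\ln u\le Cr^\beta$ only for \emph{some} $\beta>0$ depending on the Harnack constant, and convergence of the rescalings to a constant gives continuity at $0$ without any rate; neither delivers $C^{0,\alpha}$ for \emph{every} $\alpha<1$. The paper's mechanism is entirely different and is the real content of the proof: from the moving-spheres inequality $u_{x,\lambda}\le u$ (valid for $0<\lambda<|x|<R$ by \cite[Proposition 1.14]{Li09-CPAM}, with $R$ controlled via \eqref{15Nov11-H3}) one extracts, through the calculus Lemma \ref{HalfDirDerEst}, the \emph{one-sided Lipschitz} bound $\ln u(x)-\ln u(x_0)\le C|x-x_0|/R$ near the origin and the sphere oscillation bound \eqref{14Nov11-G1}; the matching lower bound with exponent $\alpha$ is then obtained by a harmonic-replacement argument (comparing $v(y)=r_0^{-1}[\ln u(x_0+\tfrac{r_0}{2}y)-\ln u(x_0)]$ with the harmonic function having the same boundary data and invoking boundary elliptic regularity). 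None of these ingredients appear in your sketch, so the claimed estimate $\|u\|_{C^{0,\alpha}(B_{1/2})}\le C(\Gamma,\alpha)\inf_{B_{1/2}}u$ for all $\alpha<1$ is not established.
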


\begin{proof} Let $u$ be the extended function. We first prove that 
\begin{equation}
\max_{\partial B_r} u = \sup_{B_r} u, \qquad 0<r<1.
	\label{15Nov11-H3}
\end{equation}
By Theorem \ref{RadVisClfn}, the function, with
$0<\epsilon<1$ and $0<r<1$,
\[
v_{\epsilon,r}(x) = \left\{\begin{array}{ll}
	\Big[\epsilon\,|x|^{-\muGp + 1} + \sup_{\partial B_r} u^{\frac{\muGp - 1}{n-2}}\Big]^{\frac{n-2}{\muGp - 1}} & \text{ if } \muGp > 1,\\
	\sup_{\partial B_r} u\,r^{\epsilon}\,|x|^{-\epsilon} &\text{ if } \muGp = 1,
	\end{array}\right.
\]
satisfies $\lambda(A^{v_{\epsilon,r}}) \in \partial \Gamma$ in $B_r \setminus \{0\}$, $v_{\epsilon,r} \geq u$ on $\partial B_r$. Clearly, there exists $\delta_i \rightarrow 0^+$ such that
\[
\min_{\partial B_{\delta_i}} [v_{\epsilon,r} - u] \rightarrow \infty \text{ as } i \rightarrow \infty.
\]
Here we have used $\muGp \ge 1$. An application of Lemma \ref{E1-1} on $B_r \setminus B_{\delta_i}$ gives
\[
u \leq v_{\epsilon,r} \text{ in } B_r \setminus B_{\delta_i}.
\]
Sending $i \rightarrow \infty$ and then $\epsilon \rightarrow 0$, we obtain \eqref{15Nov11-H3}.

Since $u$ is a positive super-harmonic function in $B_1 \setminus \{0\}$ and the Newtonian capacity of $\{0\}$ is zero, we have
\begin{equation}
\min_{\partial B_r} u = \inf_{B_r} u.
	\label{15Nov11-H2}
\end{equation}

For $0 < |x| < \frac{7}{8}$, applying Theorem \ref{DegGradEst} to $u(x + \frac{|x|}{8} \cdot)$ leads to
\begin{equation}
|\nabla \ln u(x)| \leq \frac{C(\Gamma)}{|x|} \text{ for all } x \in B_{7/8} \setminus \{0\}.
	\label{15Nov11-H5}
\end{equation}
In particular,
\begin{equation}
R := \frac{1}{4}\left[\frac{\max_{\partial B_{3/4}} u}{\min_{\partial B_{3/4}} u}\right]^{-\frac{1}{n-2}} \geq C(\Gamma)^{-1} > 0.
	\label{19Mar12-U1}
\end{equation}

For $0 < \lambda < |x| < R$ and $|y| = \frac{3}{4}$, we have, in view of \eqref{15Nov11-H3},
\begin{align*}
u_{x, \lambda}(y) 
	&:= \frac{\lambda^{n-2}}{|y - x|^{n-2}}\,u\Big(x + \frac{\lambda^2(y - x)}{|y - x|^2}\Big)\\
	&\leq (2R)^{n-2}\,\sup_{B_{3/4}} u = (2R)^{n-2}\,\max_{\partial B_{3/4}} u\leq \min_{\partial B_{3/4}} u \leq u(y).
\end{align*}
Since $u_{x,\lambda} = u$ on $\partial B_\lambda (x)$ and $\lambda(A^{u_{x,\lambda}}) \in \partial\Gamma$ in $B_{3/4}\setminus B_\lambda(x)$, we can apply the comparison principle \cite[Proposition 1.14]{Li09-CPAM} to obtain
\[
u_{x,\lambda} \leq u \text{ in } B_{3/4} \setminus (B_\lambda(x) \cup \{0\}) \text{ for all } 0 < \lambda < |x| < R.
\]
By Lemma \ref{HalfDirDerEst}, we have
\begin{equation}
|\max_{\partial B_r} \ln u - \min_{\partial B_r} \ln u| \leq \frac{C(n)\,r}{R} \text{ for all } 0 < r < R/2.
	\label{14Nov11-G1}
\end{equation}
We deduce from \eqref{15Nov11-H3}, \eqref{15Nov11-H2} and \eqref{14Nov11-G1} that
\begin{equation}
\sup_{B_r} |\ln u - \ln u(0)| \leq \max_{\partial B_r} \ln u - \min_{\partial B_r} \ln u \leq \frac{C(n)\,r}{R} \text{ for all } 0 < r < R/2.
	\label{15Nov11-H4}
\end{equation}
From \eqref{15Nov11-H4} and \eqref{15Nov11-H5}, we can use interpolation to show that $\ln u \in C^{0,\frac{1}{2}}(B_{R/2})$.

To obtain better regularity, we refine our usage of Lemma \ref{HalfDirDerEst} and the super-harmonicity of $u$. Fix $\alpha \in (0,1)$, $x_0 \in B_{R/8}$ and let $r_0 = |x_0|$. By Lemma \ref{HalfDirDerEst}, we have
\begin{equation}
\ln u(x) - \ln u(x_0) \leq \frac{C(n)}{R}\,|x - x_0| \text{ for any } x \in B_{r_0/2}(x_0) \setminus B_{r_0}(0).
	\label{11Jan12-A1}
\end{equation}
Also by the same lemma,
\begin{equation}
\ln u(x) - \ln u(\tilde x) \leq \frac{C(n)}{R}\,|x - \tilde x| \text{ for any } x,\tilde x \in \partial B_{r_0}(0).
	\label{11Jan12-A2}
\end{equation}
It remains to bound $\ln u(x) - \ln u(x_0)$ from below for $x \in B_{r_0/2}(x_0) \setminus B_{r_0}(0)$.

Let $y_0 = \frac{x_0}{|x_0|}$ and define
\[
v(y) = \frac{1}{r_0}[\ln u(x_0 + \frac{r_0}{2} y) - \ln u(x_0)] \text{ for } y \in B_1(0) \setminus B_{2}(-2y_0).
\]
As $u$ is super-harmonic, so is $v$. In addition, by \eqref{15Nov11-H4} and \eqref{11Jan12-A2},
\begin{align}
&|v(y)| \leq \frac{C(n)}{R}\,|y| \text{ for any } y \in B_1(0) \setminus B_{2}(-2y_0),\label{11Jan12-A3P}\\
&v(y) - v(\tilde y) \leq \frac{C(n)}{R}\,|y - \tilde y| \text{ for any } y,\tilde y \in \partial B_{2}(-2y_0) \cap B_1(0).
	\label{11Jan12-A3}
\end{align}
Define $w$ as the harmonic function in $B_1(0) \setminus B_2(-2y_0)$ such that $w = v$ on $\partial(B_1(0) \setminus B_2(-2y_0))$. Then \eqref{11Jan12-A3P}, \eqref{11Jan12-A3} and elliptic regularity imply that
\[
\|w\|_{C^\alpha(B_{1/2}(0) \setminus B_2(-2y_0))} \leq C\big(\|v\|_{C^{0,1}(\partial B_{2}(-2y_0) \cap B_1(0))} + \|v\|_{L^\infty(B_{2}(-2y_0) \cap B_1(0))}\big) \leq \frac{C(n,\alpha)}{R}.
\]
Thus, by the maximum principle,
\[
v(y) - v(0) \geq w(y) - w(0) \geq -\frac{C(n,\alpha)}{R}\,|y|^\alpha \text{ for any } y \in B_{1/2}(0) \setminus B_2(-2y_0)
\]
Recalling back we obtain that
\begin{equation}
\ln u(x) - \ln u(x_0) \geq -\frac{C(n,\alpha)}{R}\,|x - x_0|^\alpha \text{ for any } x \in B_{r_0/2}(x_0) \setminus B_{r_0}(0).
	\label{11Jan12-A4}
\end{equation}
From \eqref{11Jan12-A1} and \eqref{11Jan12-A4}, we get
\[
|\ln u(x) - \ln u(x_0)| \leq \frac{C(n,\alpha)}{R}\,|x - x_0|^\alpha \text{ for any } x \in B_{r_0/2}(x_0) \setminus B_{r_0}(0).
\]
This implies that
\begin{equation}
|u(x) - u(y)| \leq \frac{C(n,\alpha)}{R}\,|x - y|^\alpha \text{ for any } x \in B_{R/8}(0) \text{ and } y \in B_{|x|/4}(x).
	\label{19Mar12-B1}
\end{equation}
(Here $x_0$ could be either $x$ or $y$, whoever that has smaller norm.)

To complete the proof, we show that
\begin{equation}
|\ln u(x) - \ln u(y)| \leq \frac{C(n,\alpha)}{R}\,|x - y|^\alpha \text{ for any } x,y \in B_{R/8}(0) \setminus \{0\}.
	\label{19Mar12-B2}
\end{equation}
The assertion is readily seen from \eqref{15Nov11-H5}, \eqref{19Mar12-U1} and \eqref{19Mar12-B2}. To prove \eqref{19Mar12-B1}, we may assume without loss of generality that $|x| \geq |y|$. If $|x - y| < |x|/4$, \eqref{19Mar12-B2} follows from \eqref{19Mar12-B1}. Otherwise, $|x - y| \geq |x|/4$ and so by \eqref{15Nov11-H4},
\[
|\ln u(x) - \ln u(y)|\leq |\ln u(x) - \ln u(0)| + |\ln u(y) - \ln u(0)| \leq \frac{C(n)}{R}\,|x| \leq \frac{C(n)}{R}\,|x - y|,
\]
which also implies \eqref{19Mar12-B2}.
\end{proof}

\subsection{Leading term at an isolated singularity}\label{SSec:LeadTerm}

\begin{proof}[Proof of Theorem \ref{BocherGen}.] Define
\[
v(r) = \min_{\partial B_r} u.
\]
Then $v$ is positive and super-harmonic in $B_1\setminus \{0\}$. Since $\{0\}$ has zero Newtonian capacity, $v$ is super-harmonic in $B_1$. In particular, $v$ is non-increasing.

We claim that $\lim_{r \rightarrow 0} r^{n-2}\,v(r)$ exists and is finite. Fix some $0 < \rho_1 < 1$ and for $0 < \rho < \rho_1$, let $w_\rho$ be the radially symmetric function which is harmonic in $B_1 \setminus \{0\}$ such that $w_\rho(\rho) = v(\rho) + 1$ and $w_\rho(\rho_1) = v(\rho_1)$. In fact, $w_\rho(r) = a_{1,\rho}\,r^{2-n} + a_{2,\rho}$ where
\[
a_{1,\rho} = \frac{v(\rho) + 1 - v(\rho_1)}{\rho^{2-n} - \rho_1^{2-n}} > 0 \text{ and } a_{2,\rho} = v(\rho_1) - a_{1, \rho}\,\rho_1^{2-n}.
\]
Note that $w_\rho(r) \geq v(r)$ for all $0 < r < \rho$. (Because if $w(s) < v(s)$ for some $s < \rho$, the maximum principle implies that $w_\rho(r) \leq v(r)$ for $s < r < \rho_1$, which implies in particular that $w_\rho(\rho) \leq v(\rho)$ contradicting our choice of $w_\rho(\rho)$.) It follows that
\[
\limsup_{r \rightarrow 0} r^{n-2}\,v(r) \leq \limsup_{r \rightarrow 0} r^{n-2}\,w_\rho(r) = a_{1,\rho} \text{ for all } 0 < \rho < \rho_1.
\]
In particular, $\limsup_{r \rightarrow 0} r^{n-2}\,v(r)$ is finite. 
Also, we obtain from the above that
\[
\limsup_{r \rightarrow 0} r^{n-2}\,v(r) \leq \liminf_{\rho \rightarrow 0}
 a_{1,\rho} 
 = \liminf_{r \rightarrow 0} r^{n-2}\,v(r),
\]
which proves the claim. We thus have
\[
a := \liminf_{|x| \rightarrow 0} |x|^{n-2}\,u(x) = \lim_{r \rightarrow 0} r^{n-2}\,v(r) < \infty.
\]

We next claim that
\[
A := \limsup_{|x| \rightarrow 0 } |x|^{n-2}\,u(x) \text{ is finite}.
\]
To prove the claim, let, for $0 < r < 1/4$,
\[
u_r(y) = u(r\,y), \qquad \frac{1}{2} < |y| < 2.
\]
Then $v_r$ satisfies $\lambda(A^{u_r}) \in \partial \Gamma$ in $\{1/2 < |y| < 2\}$. Thus, by Theorem \ref{DegGradEst},
\[
\max_{\partial B_1} u_r \leq C\,\min_{\partial B_1} u_r
\]
where $C$ depends only on $n$. Equivalently,
\[
\max_{\partial B_r} u \leq C\,\min_{\partial B_r} u.
\]
It follows that $A \leq C\,a < \infty$.

Next, we show that $A = a$. Assume by contradiction that $A > a$. 
Then, for some $\epsilon > 0$, we can find a sequence $x_j \rightarrow 0$ such that
\begin{equation}
|x_j|^{n-2}\,u(x_j) \geq a + 2\epsilon
	.\label{24F11-3}
\end{equation}
Furthermore, we can assume that
\begin{equation}
|x_j|^{n-2}\,\min_{\partial B_{|x_j|}} u = |x_j|^{n-2}\,v(|x_j|) \leq a + \epsilon
	.\label{24F11-4}
\end{equation}

Define
\[
u_j(y) = \frac{1}{R_j^{n-2}}\,u\Big(\frac{y}{R_j}\Big), \qquad |y| < R_j = |x_j|^{-1}.
\]
Then, by \eqref{24F11-3} and \eqref{24F11-4},
\begin{equation}
\left\{\begin{array}{l}
\lambda(A^{u_j}) \in \partial\Gamma \text{ in } B_{R_j} \setminus \{0\},\\
\displaystyle\min_{\partial B_1} u_j \leq a + \epsilon \text{ and }  \max_{\partial B_1} u_j \geq a + 2\epsilon.
\end{array}\right.
	\label{24F11-5}
\end{equation}
Since $\min_{\partial B_1} u_j$ is bounded, we can apply Theorem \ref{DegGradEst} to obtain the boundedness of $u_j$ and $|\nabla u_j|$ on every compact subset of $\RR^n \setminus \{0\}$. By the Ascoli-Arzela theorem, $u_j$, after passing to a subsequence, converges uniformly on compact subset of $\RR^n \setminus \{0\}$ to some locally Lipschitz function $u_*$. Furthermore, by \eqref{24F11-5}, $u_*$ satisfies \eqref{liouville2} in $\RR^n \setminus \{0\}$ in the viscosity sense. By  \cite[Theorem 1.18]{Li09-CPAM}, $u_*$ is radially symmetric about the origin, i.e. $u_*(y) = u_*(|y|)$. This results in a contradiction as the second line in \eqref{24F11-5} and the convergence of $u_j$ to $u_*$ imply that
\[
\max_{\partial B_1} u_* \geq a + 2\epsilon > a + \epsilon \geq \min_{\partial B_1} u_*.
\]

We conclude that $A = a$ and thereby finish the proof.
\end{proof}

When $\muGp = 1$, the leading term for a singular solution of \eqref{liouville2} might not be $|x|^{-(n-2)}$; see Theorem \ref{RadVisClfn}. A more precise picture is given by the following lemma.

\begin{lemma}\label{MiddleAsymp}
Assume that $\Gamma$ satisfies \eqref{2}, \eqref{3}, \eqref{AxisOnBdry} and $\muGp = 1$. Let $u \in C^{0,1}_{\rm loc}(B_1 \setminus \{0\})$ be a positive viscosity solution of \eqref{liouville2} in $B_1 \setminus \{0\}$. Then there exists $0 \leq \alpha \leq n-2$ such that
\[
\lim_{|x| \rightarrow 0} \frac{\ln u(x)}{\ln |x|} = -\alpha.
\]
\end{lemma}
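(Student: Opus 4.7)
The plan is to reduce to the radial case and then apply the monotonicity formula from Lemma~\ref{XYZ}. Define
\[
v(x) = \min_{|y| = |x|}\, u(y), \qquad x \in B_1\setminus\{0\}.
\]
Since $u$ is positive and continuous, $v$ is a positive continuous radial function. I would first verify that $v$ is a viscosity super-solution of $\lambda(A^v) \in \bar\Gamma$ in $B_1\setminus\{0\}$. This uses that (i) the equation is rotationally invariant, so each $u \circ R$ ($R \in SO(n)$) is a viscosity super-solution, and (ii) $v = \inf_{R \in SO(n)} u \circ R$ is a continuous infimum of a compact family of super-solutions. Concretely, if $\varphi \in C^2$ touches $v$ from below at some $x_0$, compactness of $SO(n)$ lets us extract $R_* \in SO(n)$ realizing $v(x_0) = u(R_* x_0)$; then $u \circ R_* \geq v \geq \varphi$ near $x_0$ with equality at $x_0$, and the super-solution property of $u \circ R_*$ together with the $SO(n)$-invariance of the eigenvalues of $A^{\varphi}$ yield $\lambda(A^\varphi(x_0)) \in \bar\Gamma$.

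With $v$ established as a radial viscosity super-solution, Lemma~\ref{XYZ} applies in the case $\muGp = 1$: for every $R_0 \in (0,1)$, the quantity
\[
\Psi_{R_0}(r) = \frac{\ln v(r) - \ln v(R_0)}{\ln R_0 - \ln r}
\]
is non-decreasing in $r \in (0, R_0)$. Lemma~\ref{Lem:VisC0=>Lip} gives the two-sided bound
\[
0 \leq \ln \frac{v(r)}{v(R_0)} \leq (n-2)\,\ln \frac{R_0}{r}, \qquad 0 < r < R_0,
\]
so $\Psi_{R_0}(r) \in [0, n-2]$. By monotonicity and boundedness, $\alpha := \lim_{r \to 0^+} \Psi_{R_0}(r) \in [0,n-2]$ exists. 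Rewriting
\[
\Psi_{R_0}(r) = \frac{\ln v(r)}{\ln(1/r) + \ln R_0} - \frac{\ln v(R_0)}{\ln(1/r) + \ln R_0}
\]
and letting $r \to 0^+$, the second term vanishes and the first is asymptotic to $\ln v(r)/\ln(1/r)$; thus $\lim_{r \to 0^+} \ln v(r)/\ln(1/r) = \alpha$, and in particular the value $\alpha$ is independent of $R_0$.

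Finally, I would pass from $v$ to $u$ via the Harnack inequality of Theorem~\ref{DegGradEst}. Applied after rescaling to each ball $B_{|x_0|/2}(x_0)$, it yields $|\nabla \ln u(x)| \leq C(\Gamma)/|x|$ on $B_{3/4}\setminus\{0\}$, and integrating tangentially along $\partial B_r$ gives a uniform bound $\max_{\partial B_r} \ln u - \min_{\partial B_r} \ln u \leq C$. Hence $\ln u(x) - \ln v(|x|)$ is bounded, so
\[
\lim_{|x| \to 0} \frac{\ln u(x)}{\ln(1/|x|)} = \alpha,
\]
equivalently $\lim_{|x| \to 0} \ln u(x)/\ln|x| = -\alpha$, as required. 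The main subtlety in this plan is the first step, namely justifying that the pointwise minimum over rotations, $v$, is a viscosity super-solution; everything else is a routine combination of the monotonicity formula of Lemma~\ref{XYZ}, the Lipschitz bound of Lemma~\ref{Lem:VisC0=>Lip}, and the Harnack/gradient estimate of Theorem~\ref{DegGradEst}.
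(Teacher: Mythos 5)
Your proposal is correct and follows essentially the same route as the paper: the paper also sets $v(|x|)=\min_{\partial B_{|x|}}u$, invokes Lemma \ref{XYZ} for the monotonicity of $\Psi_{R_0}$, uses Lemma \ref{Lem:VisC0=>Lip} to confine the limit to $[0,n-2]$, and passes back to $u$ via the gradient/oscillation bound from Theorem \ref{DegGradEst}. The only difference is cosmetic: you spell out the standard verification that the infimum over rotations is a viscosity super-solution, which the paper states without proof.
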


\begin{proof} Let
\begin{equation}
v(x) = v(|x|) = \min_{\partial B_{|x|}} u.
	\label{72-extra}
\end{equation}
Then $v \in C^0(B_1 \setminus \{0\})$ and satisfies $\lambda(A^v) \in \bar\Gamma$ in the viscosity in $B_1 \setminus \{0\}$. By Lemma \ref{XYZ}, the function
\[
\frac{\ln v(r) - \ln v(1/2)}{|\ln r|} 
\]
is non-decreasing for $r \in (0,1/2)$. This implies in particular that
\[
\alpha := \liminf_{|x| \rightarrow 0} \frac{\ln u(x)}{|\ln |x||} = \lim_{r \rightarrow 0} \frac{\ln v(r)}{|\ln r|} \text{ exists and is in $[0,\infty)$.}
\]
Here we have used the fact that $u \geq \min_{\partial B_{1/2}} u > 0$, a consequence of the super-harmonicity of $u$ in $B_1 \setminus \{0\}$. Also, by Lemma \ref{Lem:VisC0=>Lip} (or Theorem \ref{BocherGen}), $\alpha \in [0,n-2]$.

Next, by Theorem \ref{DegGradEst}, 
\[
|\nabla \ln u(x)| \leq \frac{C}{|x|} \text{ in } B_1 \setminus \{0\}, \text{ and so } 
	\osc_{\partial B_{|x|}} \ln u \leq C \text{ for } 0 < |x|  < 1.
\]
It follows that
\[
\frac{\ln u(x)}{|\ln |x||} \leq \frac{\ln v(x)}{|\ln |x||} + \frac{C}{|\ln |x||}.
\]
The conclusion easily follows.
\end{proof}

\subsection{Proof of Theorem \ref{BocherSmall}}

We start by showing that
\begin{equation}
\min_{\partial B_r} \ringw \leq \ringw \leq \max_{\partial B_r} \ringw \text{ in } B_r \setminus \{0\}
	\quad \forall~ 0 < r < 1.
	\label{X14}
\end{equation}
Fix $0 < r < 1$. We first consider the case where $a > 0$. For $0 < \epsilon < a$, set
\begin{align*}
&v^+_{\epsilon,r}(x) = \Big[(a + \epsilon)^{\frac{\muGp - 1}{n-2}}\,|x|^{-\muGp + 1} + \max_{\partial B_r} \ringw\Big]^{\frac{n-2}{\muGp - 1}},\\
&v^-_{\epsilon,r}(x) = \Big[(a - \epsilon)^{\frac{\muGp - 1}{n-2}}\,|x|^{-\muGp + 1} + \min_{\partial B_r} \ringw\Big]^{\frac{n-2}{\muGp - 1}}.
\end{align*}
Then, by Theorem \ref{RadVisClfn}, we have $\lambda(A^{v^+_{\epsilon,r}}) \in \partial \Gamma$ and $\lambda(A^{v^-_{\epsilon,r}}) \in \partial \Gamma$ in $B_r \setminus \{0\}$, and $v^-_{\epsilon,r} < u < v^+_{\epsilon,r}$ on $\partial B_r$. Furthermore, by Theorem \ref{BocherGen}, there exists $\delta = \delta(\epsilon,r) > 0 $ such that
\[
v^-_{\epsilon,r} < u < v^+_{\epsilon,r} \text{ in } B_\delta\setminus \{0\}.
\]
Thus, by Lemma \ref{E1-1},
\[
v^-_{\epsilon,r} \leq u \leq v^+_{\epsilon,r} \text{ in } B_r\setminus \{0\}.
\]
Sending $\epsilon \rightarrow 0$ we obtain \eqref{X14}.

Next, consider the case where $a = 0$. The argument above establishes the first part of \eqref{X14}. The second part follows from the super-harmonicity of $u = \ringw^{\frac{(n-2)k}{n-2k}}$. 

We turn to the proof of the dichotomy \eqref{RigidSmall}-\eqref{RadMaxMinPrinRW}. Assume that \eqref{RadMaxMinPrinRW} does not hold. Then by \eqref{X14}
\[
\min_{\partial B_r} \ringw = \inf_{B_r \setminus \{0\}} \ringw(x) = 0.
\]
We thus have $\Delta u \leq 0 = \Delta (a|x|^{-(n-2)})$ in $B_1 \setminus \{0\}$, $u \geq a\,|x|^{-(n-2)}$ in $B_1 \setminus \{0\}$ and the set $\{x \in B_1\setminus \{0\}: u(x) = a\,|x|^{-(n-2)}\}$ is non-empty. The strong maximum principle for the Laplacian implies that $u \equiv a\,|x|^{-(n-2)}$ in $B_1 \setminus \{0\}$. The last assertion follows from Proposition \ref{RemSing}.
\hfill$\square$

\begin{corollary}
Assume that $\Gamma$ satisfies \eqref{2}, \eqref{3}, \eqref{AxisOnBdry} 
and $ \muGp >1$. Let $\Omega$ be an open subset of $\RR^n$ containing $\cup_{i = 1}^2 B_{|p_1 - p_2|}(p_i)$ for two distinct points $p_1$ and $p_2$. Assume that $u \in C^0(\Omega)$ is a positive solution of \eqref{liouville2} in the viscosity sense in $\Omega \setminus \{p_1, p_2\}$ and
\[
\lim_{|x - p_i| \rightarrow 0} |x - p_i|^{n-2}\,u(x) = a_i > 0.
\]
Then, for any $r < |p_1 - p_2|$,
\[
\inf_{B_r(p_i) \setminus \{p_i\}} \left(u(x)^{\frac{\muGp - 1}{n-2}} - a_i^{\frac{\muGp - 1}{n-2}}\,|x|^{-\muGp - 1}\right) > 0.
\]
\end{corollary}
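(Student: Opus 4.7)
The plan is to apply Theorem \ref{BocherSmall} at each singularity $p_i$ and to eliminate the rigid branch of its dichotomy using the presence of the other singularity. Write $j$ for the index in $\{1,2\}\setminus\{i\}$. Since $p_j \notin B_{|p_1-p_2|}(p_i)$, the function $u$ is a positive viscosity solution of \eqref{liouville2} on $B_s(p_i) \setminus \{p_i\}$ for every $s < |p_1 - p_2|$, with $B_s(p_i)\subset \Omega$. After a standard translation/rescaling, Theorem \ref{BocherSmall} applies, and combined with Theorem \ref{BocherGen} (which identifies the infimum defining $a$ with the limit $a_i > 0$) yields the decomposition
\[
u(x)^{\frac{\muGp - 1}{n - 2}} = a_i^{\frac{\muGp - 1}{n - 2}}\, |x - p_i|^{-\muGp + 1} + \ringw_i(x)
\]
on $B_s(p_i)\setminus\{p_i\}$, with $\ringw_i \ge 0$. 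Since the leading profile is the same for every such $s$, the remainder $\ringw_i$ is a single function on $B_{|p_1-p_2|}(p_i) \setminus \{p_i\}$.

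Next I would invoke the dichotomy \eqref{RigidSmall}--\eqref{RadMaxMinPrinRW}: for each admissible ball, either $\ringw_i \equiv 0$ there (so $u \equiv a_i |x-p_i|^{-(n-2)}$) or the strict radial minimum principle
\[
0 < \min_{\partial B_\rho(p_i)} \ringw_i \le \ringw_i \quad \text{in } B_\rho(p_i)\setminus\{p_i\}, \qquad 0 < \rho < s,
\]
holds. The main work is to rule out the first alternative. The key observation is that if $\ringw_i$ vanishes on some ball $B_{s_0}(p_i)$, then for any $s_0 < s' < |p_1-p_2|$ the dichotomy on $B_{s'}(p_i)$ \emph{must} return the rigid branch, since the strict positivity branch is incompatible with $\ringw_i$ vanishing on the subball $B_{s_0}(p_i)$. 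Letting $s' \uparrow |p_1-p_2|$ gives $u(x) = a_i |x - p_i|^{-(n-2)}$ throughout $B_{|p_1-p_2|}(p_i) \setminus \{p_i\}$. But then $u$ stays bounded as $x \to p_j$ along any sequence in this punctured ball, contradicting the hypothesis $\lim_{x \to p_j} |x - p_j|^{n-2} u(x) = a_j > 0$, which forces $u(x) \to \infty$ at $p_j$.

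Only the second alternative therefore remains, and for any fixed $r < |p_1 - p_2|$ the radial minimum estimate yields
\[
\inf_{B_r(p_i) \setminus \{p_i\}} \ringw_i \ge \min_{\partial B_r(p_i)} \ringw_i > 0,
\]
which is the claimed inequality. The main subtle step is the propagation of the rigid branch of the dichotomy outward from a small ball to one whose closure touches $p_j$; once that propagation is secured, the contradiction with the prescribed second singularity $a_j > 0$ is immediate, and the estimate follows directly from the radial minimum principle of Theorem \ref{BocherSmall}.
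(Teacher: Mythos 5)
Your argument is correct and takes essentially the same route as the paper: both rest on the dichotomy \eqref{RigidSmall}--\eqref{RadMaxMinPrinRW} of Theorem \ref{BocherSmall} applied on $B_{|p_1-p_2|}(p_i)$, with the rigid alternative excluded because $u \equiv a_i|x-p_i|^{-(n-2)}$ would force $a_j = \lim_{|x-p_j|\to 0}|x-p_j|^{n-2}u(x) = 0$. The paper simply runs this as a direct contradiction on the full ball $B_{|p_1-p_2|}(p_i)$ rather than propagating the rigid branch through increasing radii, but the content is identical.
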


\begin{proof} Assume otherwise that, for some $0 < r < |p_1 - p_2|$,
\[
\inf_{B_r(p_1) \setminus \{p_1\}} \left(u(x)^{\frac{\muGp - 1}{n-2}} - a_1^{\frac{\muGp - 1}{n-2}}\,|x|^{-\muGp - 1}\right) = 0.
\]
By \eqref{RigidSmall} in Theorem \ref{BocherSmall}, 
\[
u(x) \equiv a_1\,|x - p_1|^{-(n-2)} \text{ in } B_{|p_1 - p_2|}(p_1).
\]
This implies that
\[
a_2 = \lim_{|x - p_2| \rightarrow 0} |x - p_2|^{n-2}\,u(x) = 0,
\]
a contradiction.
\end{proof}
\subsection{Proof of Theorem \ref{BocherMiddle}}

By Lemma \ref{MiddleAsymp}, 
\[
\lim_{|x| \rightarrow 0} \frac{\ln u(x)}{|\ln |x||} = \alpha \in [0,n-2].
\]

To proceed, consider first the case $\alpha = n - 2$. The function $v$ given by \eqref{72-extra} satisfies $\lambda(A^v) \in \bar\Gamma$ in the viscosity sense in $B_1 \setminus \{0\}$. By Lemma \ref{XYZ}, the function
\[
\frac{\ln v(r) - \ln v(s)}{\ln s - \ln r}
\]
is non-decreasing in $r$ for $r \in (0,s)$. It follows that 
\[
\frac{\ln v(r) - \ln v(s)}{\ln s - \ln r} \geq \lim_{r \rightarrow 0} \frac{\ln v(r) - \ln v(s)}{\ln s - \ln r} = \alpha = n-2.
\]
On the other hand, by estimate \eqref{SuperSolLipBnd} in Lemma \ref{Lem:VisC0=>Lip}, 
\[
\frac{\ln v(r) - \ln v(s)}{\ln s - \ln r} \leq n - 2.
\]
Combining the last two estimate we immediately get
\[
v(r) = \frac{C}{|x|^{n-2}} \text{ for some positive constant } C.
\]
In particular, $v$ is harmonic in $B_1 \setminus \{0\}$. As $u$ is super-harmonic in $B_1 \setminus \{0\}$, $u \geq v$ in $B_1 \setminus \{0\}$ and $u$ touches $v$ in the interior, the strong maximum principle implies that $u \equiv v$. This establishes the result for $\alpha = n-2$.

Next, consider \eqref{RadMaxMinPrinRWM} for $0 < \alpha < n -2$. For $0 < \epsilon < \min(\alpha, n - 2 - \alpha)$, let
\begin{align*}
v_{\epsilon,r}^+ (x) &= \exp\Big[- (\alpha + \epsilon)\,\ln|x| + \max_{\partial B_r} \ringw\Big],\\
v_{\epsilon,r}^- (x) &= \exp\Big[- (\alpha - \epsilon)\,\ln|x| + \min_{\partial B_r} \ringw\Big].
\end{align*}
As in the proof of Theorem \ref{BocherSmall}, an application of Lemma \ref{E1-1} gives $v_{\epsilon,r}^- \leq u \leq v_{\epsilon,r}^+$ in $B_r \setminus \{0\}$, which implies \eqref{RadMaxMinPrinRWM}.

Finally, consider $\alpha = 0$. The argument above shows the first part of \eqref{RadMaxMinPrinRWM}. The second part of \eqref{RadMaxMinPrinRWM} follows from the super-harmonicity of $u = e^{\ringw}$. The remaining assertion on the regularity of $\ringw$ follows from Proposition \ref{RemSing}.
\hfill$\square$

\subsection{Proof of Theorem \ref{BocherLarge}}

The function $v$ defined by \eqref{72-extra} belongs to $LSC(B_1\setminus\{0\}) \cap L^\infty_{\rm loc}(B_1 \setminus \{0\})$ and satisfies $\lambda(A^v) \in \bar\Gamma$ in $B_1 \setminus \{0\}$ in the viscosity sense. We claim that
\begin{equation}
\text{ either } v(x) = \frac{C}{|x|^{n-2}} \text{ for some } C > 0 \text{ or } \sup_{B_{\frac{1}{2}} \setminus \{0\}} v < \infty.
	\label{25F11-G1}
\end{equation}
Indeed, if the first alternative in \eqref{25F11-G1} does not hold, we can find $0 < r_1 < r_2 < 1$ such that
\[
v(r_1) \neq \frac{v(r_2)\,r_2^{n-2}}{r_1^{n-2}}.
\]
Using \eqref{SuperSolLipBnd} in Lemma \ref{Lem:VisC0=>Lip}, we thus have
\[
v(r_2) \leq v(r_1) < \frac{v(r_2)\,r_2^{n-2}}{r_1^{n-2}}.
\]
As in the proof of Proposition \ref{RemSingLarge} (see the argument following \eqref{26M12-X1}), this implies that $v$ is bounded near the origin. This proves \eqref{25F11-G1}.

If the first alternative in \eqref{25F11-G1} holds, we have $u \geq v$ in $B_1 \setminus \{0\}$, $\Delta u \leq 0 = \Delta v$ in $B_1\setminus \{0\}$ and the set $\{x \in B_1 \setminus \{0\}:  u = v\}$ is non-empty. By the strong maximum principle for the Laplacian, $u \equiv v$ and the conclusion follows. If the second alternative in \eqref{25F11-G1} holds, the conclusion follows from Proposition \ref{RemSingLarge}.
\hfill$\square$

\subsection{An analogue of Theorem \ref{BocherLarge} when \eqref{AxisOnBdry} fails}\label{SSec:BLExtra}

In contrast to Theorem \ref{BocherLarge}, when \eqref{AxisOnBdry} does not hold, there are unbounded solutions in a punctured ball of \eqref{liouville2} which are not of the form $\frac{C}{|x|^{n-2}}$. See the remark below Theorem \ref{BocherLarge}. In any event, we have:

\begin{theorem}\label{BocherLargeEx}
Assume that $\Gamma$ satisfies \eqref{2}, \eqref{3}, \eqref{AxisNotOnBdry} and $0 \leq \muGp < 1$. Let $u \in LSC(B_1 \setminus \{0\}) \cap L^\infty_{\rm loc}(B_1 \setminus \{0\}) $ be a positive viscosity solution of $\lambda(A^u) \in \bar\Gamma$ in $B_1 \setminus \{0\}$. Then $u \in C^{0,1 - \muGp}_{\rm loc}( B_1 \setminus \{0\})$ and
\[
\lim_{|x| \rightarrow 0} |x|^{n-2}\,u(x) = a \in [0,+\infty).
\]
In addition, if $a = 0$ then $u \in C_{\rm loc}^{0,1 - \muGp}(B_1)$ and \eqref{BL-Est} holds.
\end{theorem}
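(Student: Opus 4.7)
The plan is to mirror the proof of Theorem \ref{BocherLarge}, replacing its rigidity dichotomy---which relied on the monotonicity bound of Lemma \ref{Lem:VisC0=>Lip} available only under \eqref{AxisOnBdry}---by a quantitative comparison argument based on the type (c) radial solutions of Theorem \ref{RadVisClfn}, whose existence with arbitrary boundary data is guaranteed by Corollary \ref{Lem:TheBVPEx} under \eqref{AxisNotOnBdry}. First I will establish $u \in C^{0,1-\muGp}_{\rm loc}(B_1 \setminus \{0\})$ by applying the sub-solution barrier $\xi_{\bar x}$ from the proof of Proposition \ref{RemSingLarge}, whose construction does not invoke \eqref{AxisOnBdry}, on closed sub-balls of $B_1 \setminus \{0\}$, using that the super-solution $u$ is positive super-harmonic and locally bounded there.

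Next I analyse the radial minorant $v(x) := \min_{\partial B_{|x|}} u$. As the infimum over rotations of $u$, it is a radial, positive, LSC, locally bounded super-solution of $\lambda(A^v) \in \bar\Gamma$, non-increasing in $|x|$ by super-harmonicity. If $v$ stays bounded near $0$, then at min-achieving points $y_r \in \partial B_r$ one has $|y_r|^{n-2} u(y_r) = r^{n-2} v(r) \to 0$, so $\liminf_{x \to 0}|x|^{n-2} u(x) = 0$ and Proposition \ref{RemSingLarge} applies to $u$ directly, yielding the whole conclusion with $a = 0$, including extension across $0$ and \eqref{BL-Est}. Otherwise $v(r) \to +\infty$; for each $r \in (0, 1/4)$, Corollary \ref{Lem:TheBVPEx} produces the unique radial classical solution $\hat v_r$ of $\lambda(A) \in \partial\Gamma$ on $\{r \leq |x| \leq 1/2\}$ matching $v$ on the boundary, which by Theorem \ref{RadVisClfn} must be of type (c) and extends smoothly to $\{0 < |x| \leq 1/2\}$. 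Lemma \ref{E1-1} in the annulus gives $\hat v_r \leq v$ there, while Corollary \ref{Shooting} gives $v \leq \hat v_r$ on $\{0 < |x| < r\}$. A short calculation from the boundary conditions yields $C_5(r)^{(n-2)/(\muGm-1)} = r^{n-2} v(r)(1+o(1))$ as $r \to 0$, and since $|x|^{n-2} \hat v_r(x)$ is monotone in $|x|^{\muGm-1}$, the inner bound forces $\limsup_{|x|\to 0} |x|^{n-2} v(x) \leq \liminf_{r \to 0} r^{n-2} v(r)$; hence $a := \lim_{r \to 0} r^{n-2} v(r) \in (0,\infty)$ exists.

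Finally I propagate the limit from $v$ to $u$. Extending $u$ to $B_1$ by $u(0) := \liminf_{x \to 0} u(x) = +\infty$ gives a super-harmonic function on $B_1$ by Lemma \ref{SuperSolExt}. The classical Riesz--B\^ocher decomposition then writes
\[
u(x) = c_u\,|x|^{-(n-2)} + h_u(x),
\]
with $c_u \geq 0$ the atom of $-\Delta u$ at $0$ and $h_u$ a super-harmonic function on $B_1$ whose Riesz measure has no atom at $0$. Local boundedness of $u$ in $B_1 \setminus \{0\}$ transfers to $h_u$, and a standard estimate on Riesz potentials of atom-free measures gives $|x|^{n-2} h_u(x) \to 0$; thus $\lim_{|x| \to 0} |x|^{n-2} u(x) = c_u$. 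The bound $u \geq v$ yields $c_u \geq a$, while $u(y_r) = v(r)$ along the min-point sequence gives $c_u \leq a$, so $c_u = a$ and the theorem is proved. The main obstacle will be the decay estimate $|x|^{n-2} h_u(x) \to 0$: while the Riesz--B\^ocher decomposition is classical, the vanishing rate of the remainder demands ruling out concentration of the Riesz measure at the origin beyond the isolated atom, which I plan to handle by a removable-singularity argument using $u \in L^\infty_{\rm loc}(B_1 \setminus \{0\})$, with a fall-back via the strong maximum principle for $\lambda(A) \in \bar\Gamma$---available precisely under \eqref{AxisNotOnBdry}---comparing $u$ with the limit type (c) radial solution $\hat v^* := \lim_{r \to 0}\hat v_r$ constructed in the previous step.
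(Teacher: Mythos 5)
Your argument tracks the paper's proof up through the identification of $a=\lim_{r\to 0}r^{n-2}\min_{\partial B_r}u$: the local H\"older regularity via the barrier of Proposition \ref{RemSingLarge}, the dichotomy on the radial minorant $v$, the reduction of the bounded case to Proposition \ref{RemSingLarge}, and the comparison with type (c) radial solutions via Corollary \ref{Lem:TheBVPEx} and Corollary \ref{Shooting} to get $a<\infty$ are all sound and essentially what the paper does. The genuine gap is in the final step, upgrading $\lim_{r\to 0} r^{n-2}\min_{\partial B_r}u=a$ to $\lim_{|x|\to 0}|x|^{n-2}u(x)=a$. The claim that ``a standard estimate on Riesz potentials of atom-free measures gives $|x|^{n-2}h_u(x)\to 0$'' is false for positive superharmonic functions that are merely locally bounded off the origin: take $\mu=\sum_k m_k\,(\text{uniform probability measure on }B_{\epsilon_k}(y_k))$ with $|y_k|=\rho_k\to0$, $\sum_k m_k<\infty$ (so $\mu$ has no atom at $0$) and $\epsilon_k^{n-2}=m_k\rho_k^{n-2}$; then $h=G\mu$ is superharmonic on $B_1$, locally bounded on $B_1\setminus\{0\}$, yet $|y_k|^{n-2}h(y_k)\geq c>0$ along $y_k\to 0$. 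So superharmonicity plus $u\in L^\infty_{\rm loc}(B_1\setminus\{0\})$ --- which is all that your primary argument and your first proposed fix invoke --- cannot yield the decay of the remainder; the relation $\lambda(A^u)\in\bar\Gamma$ must be used again at this stage. Your fall-back (strong maximum principle against $\hat v^*=\lim_{r\to 0}\hat v_r$) only produces the one-sided bound $u\geq\hat v^*$, hence $\liminf_{|x|\to 0}|x|^{n-2}u\geq a$; since $u$ need not touch $\hat v^*$, the strong maximum principle gives nothing on $\limsup_{|x|\to 0}|x|^{n-2}u$, and that upper bound is precisely the content of this step.

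For comparison, the paper closes this by a blow-up argument: the barrier argument of Proposition \ref{RemSingLarge} yields a uniform $C^{0,1-\muGp}$ bound on $w=u^{\frac{\muGp-1}{n-2}}$ up to the origin with $w(0)=0$; the rescalings $w_j(x)=j^{1-\muGp}w(x/j)$ are then precompact; any limit $u_\infty=w_\infty^{\frac{n-2}{\muGp-1}}$ satisfies $\lambda(A^{u_\infty})\in\bar\Gamma$ in $\RR^n$ with $\min_{\partial B_r}u_\infty=a\,r^{-(n-2)}$ exactly, by the two-sided bounds \eqref{23Mar12-A1X} and \eqref{24Mar12-M2}; and the strong maximum principle for the Laplacian, applied to the blow-up limit where touching of $u_\infty$ and $a|x|^{-(n-2)}$ is forced, gives $u_\infty\equiv a|x|^{-(n-2)}$, whence the full limit by uniform convergence. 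Some version of this compactness-and-rigidity step (or another genuine use of the equation beyond superharmonicity, e.g. the Harnack inequality of Theorem \ref{DegGradEst} when it is available) is needed to complete your proof.
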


\begin{proof} Extend $u$ by $u(0) = \liminf_{|x| \rightarrow 0} u(x)$. By Proposition \ref{SuperSolExt}, $\lambda(A^u) \in \bar \Gamma$ in $B_1$ in the viscosity sense. Also, by Proposition \ref{RemSingLarge}, $u \in C^{0,1 - \muGp}_{\rm loc}(B_1\setminus \{0\})$.

As before, the proof evolves around function $v$ defined by \eqref{72-extra}, which belongs to $C(B_1\setminus\{0\})$ and satisfies $\lambda(A^v) \in \bar\Gamma$ in $B_1$ in the viscosity sense. By super-harmonicity, $v$ is non-increasing. 

\underline{Case 1:} There exists $0 < r_1  < r_2 < 1$ such that
\[
v(r_1) < \frac{v(r_2)\,r_2^{n-2}}{r_1^{n-2}}.
\]
The proof of Proposition \ref{RemSingLarge} (see the argument following \eqref{26M12-X1}) shows that $v$ is bounded at the origin, $u \in C^{0,1 - \muGp}_{\rm loc}(B_1)$ and \eqref{BL-Est} holds.

\underline{Case 2} For all $0 < r_1 < r_2 < 1$,
\begin{equation}
v(r_1) \geq \frac{v(r_2)\,r_2^{n-2}}{r_1^{n-2}}.
	\label{23Mar12-A1}
\end{equation}
In other words, $r^{n-2}\,v$ is non-increasing.

We claim that 
\begin{equation}
a := \liminf_{|x| \rightarrow 0} |x|^{n-2}\,u(x) = \liminf_{|x| \rightarrow 0} r^{n-2}\,v(r) \text{ is finite.}
	\label{23Mar12-A0}
\end{equation}
Indeed, by \eqref{23Mar12-A1}, we can choose $C_5 > 0$ and $C_6 \geq 0$  such that the function
\[
\tilde v(r) = (C_5\,r^{-\muGm + 1} - C_6)^{\frac{n-2}{\muGm - 1}}
\]
satisfies $\tilde v(1/2) = v(1/2)$ and $\tilde v(2/3) =  v(2/3)$. By Theorem \ref{RadVisClfn}, $\tilde v$ satisfies $\lambda(A^{\tilde v}) \in \partial\Gamma$ in $B_{2/3} \setminus \{0\}$. By Corollary \ref{Shooting}, we have $v(r) \leq \tilde v(r)$ for $0 < r < 1/2$, which proves the claim. 

Recalling \eqref{23Mar12-A1}, we see that
\begin{equation}
v(r) \leq \frac{a}{r^{n-2}} \text{ for all } 0 < r < 1.
	\label{23Mar12-A1X}
\end{equation}
Since $v$ is positive, $a$ is non-zero.

Next, we prove that
\begin{equation}
u(x) \geq \Big[a^{\frac{\muGm-1}{n-2}}\,|x|^{-\muGm + 1} - \max_{\partial B_{3/4}} \ringw \Big]^{\frac{n-2}{\muGm-1}},
	\label{24Mar12-M2}
\end{equation}
where $\ringw(x) = u(x)^{\frac{\muGm - 1}{n-2}} - a^{\frac{\muGm-1}{n-2}}\,|x|^{-\muGm + 1}$. For sufficiently small $\epsilon > 0$ , define
\[
v_{\epsilon,r}(x) = \Big[(a - \epsilon)^{\frac{\muGm-1}{n-2}}\,|x|^{-\muGm + 1} - \max_{\partial B_r} \ringw \Big]^{\frac{n-2}{\muGm-1}}.
\]
Clearly, $v_{\epsilon,r} \leq  u$ on $\partial B_r$ and, by \eqref{23Mar12-A1X}, for some $\delta_i \rightarrow 0$, $v_{\epsilon,r} \leq u$ on $\partial B_{\delta_i}$. Also, by Theorem \ref{RadVisClfn}, $\lambda(A^{v_{\epsilon,r}}) \in \partial \Gamma$ in $B_r\setminus\{0\}$. Hence, by Lemma \ref{E1-1}, $v_{\epsilon,r} \leq u$ in $B_r \setminus B_{\delta_i}$. Sending $\delta_i \rightarrow 0$ and then $\epsilon \rightarrow 0$, we obtain \eqref{24Mar12-M2}. 

Note that the argument leading to \eqref{T2-1} is applicable in the present situation and leads to
\begin{align*}
\left[\frac{u(x)}{\inf_{\partial B_{r}} u}\right]^{\frac{\muGp - 1}{n-2}}
	&\geq \left[\frac{|x - \bar x|}{(1 - A)r}\right]^{-\muGp + 1} + \left[\frac{u(\bar x)}{\inf_{\partial B_{r}} u}\right]^{\frac{\muGp - 1}{n-2}}
\end{align*}
for all $x \in B_r \setminus \{0\}, \bar x \in B_{Ar}\setminus \{0\}, 0 < A< 1, 0 < r < 1$. In particular, this implies that the function $w := u^{\frac{\muGp - 1}{n-2}}$ extends to a $C^{0,1-\muGp}$ function in $B_1$ (with $w(0) = 0$ in view of \eqref{23Mar12-A0}). 

For $j > 1$, define
\[
w_j(x) = j^{1-\muGm}\,w\Big(\frac{x}{j}\Big) \text{ for } |x| < j.
\]
The H\"older continuity of $w$ implies that the $w_j$ is bounded in $C^{0,1-\muGp}(B_R)$ for any fixed $R > 0$. Thus, up to a subsequence, $w_j$ converges uniformly to some $w_\infty \in C^{0,1-\muGp}_{\rm loc}(\RR^n)$. Furthermore, if we define $u_\infty = w_\infty^{\frac{n-2}{\muGp - 1}}$, then $\lambda(A^{u_\infty}) \in \bar\Gamma$ in $\RR^n$.

By \eqref{23Mar12-A1X} and \eqref{24Mar12-M2},
\[
\max_{\partial B_r} w_\infty = a^{\frac{\muGp - 1}{n-2}}\,r^{1 - \muGp} \text{ and } \min_{\partial B_r} u_\infty = a\,r^{-(n-2)} \text{ for all } 0 < r < \infty.
\]
In particular, $u_\infty(x) \geq a\,|x|^{-(n-2)}$. As $u_\infty$ is super-harmonic and $u_\infty(x) = a\,|x|^{-(n-2)}$ for some $x$, the strong maximum principle implies that $u_\infty(x) = a\,|x|^{-(n-2)}$ and $w_\infty(x) = a^{\frac{\muGp - 1}{n-2}}\,|x|^{1 - \muGp}$. Recalling the convergence of $w_j$ to $w_\infty$, we see that
\[
a = \lim_{|x| \rightarrow 0} |x|^{n-2}\,u(x),
\]
which finishes the proof.
\end{proof}


        

\appendix


\section{A calculus lemma}

For a continuous function $w$, let $w_{y,\lambda}$ denote the Kelvin transformation of $w$ about the sphere $\partial B_\lambda(y)$, i.e.
\[
w_{y,\lambda}(x) = \frac{\lambda^{n-2}}{|x - y|^{n-2}}w\Big(y + \frac{\lambda^2(x - y)}{|x - y|^2}\Big) \text{ wherever the expression make sense.}
\]

In \cite[Lemma 2]{LiNg-arxiv}, we show, as an extension of \cite[Lemma A.2]{LiLi05}, that if $w$ is a positive continuous function in $B_1(0)$ and
\[
w_{y,\lambda}(x) \leq w(x) \text{ for any } B_\lambda(y) \subset B_1(0) \text{ and } x \in B_1(0) \setminus B_\lambda(y),
\]
then $\ln w$ is locally Lipschitz in $B_1(0)$ and
\[
|\nabla\ln w(x)| \leq \frac{n-2}{1 - |x|} \text{ a.e. in } B_1(0).
\]
We present a generalization which is needed in the body of the paper.

\begin{lemma}\label{HalfDirDerEst}
Assume that $w$ is a positive continuous function in $B_1(0)\setminus \{0\}$ and
\[
w_{y,\lambda}(x) \leq w(x) \text{ for any } B_\lambda(y) \subset B_1(0)\setminus \{0\} \text{ and } x \in B_1(0) \setminus (B_\lambda(y) \cup \{0\}),
\]
then $\ln w$ is locally Lipschitz in $B_{1}(0) \setminus \{0\}$. Furthermore, for all $x \in B_{1/2}(0) \setminus \{0\}$ and all $y \in B_{1/2}(0) \setminus B_{|x|/2}(0)$, there holds
\[
\ln w(y) - \ln w(x) \leq C(n)\,\max\Big(|y - x|, \frac{|x|^2 - |y|^2}{|x|^2}\Big).
\]
In particular, 
\[
\sup_{\partial B_R} \ln w - \inf_{\partial B_R} \ln w \leq C(n)\,R \text{ for any } 0 < R < 1/2.
\]
\end{lemma}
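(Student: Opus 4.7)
Given $x$ and $y$ as in the statement, the strategy is to construct a single Kelvin-inversion sphere $\partial B_\lambda(z) \subset B_1(0)\setminus \{0\}$ whose associated inversion swaps $x$ with $y$; applying the hypothesis at $(z,\lambda)$ then immediately yields
\[
\ln w(y) - \ln w(x) \leq (n-2)\ln\frac{|x-z|}{\lambda}.
\]
The setup is essentially that of \cite[Lemma 2]{LiNg-arxiv}, but the extra requirement that the inversion sphere avoid the origin restricts the admissible parameter range and is precisely what produces the new term $(|x|^2 - |y|^2)/|x|^2$ in the bound.

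\textbf{Construction.} Set $d = |x - y|$ and, for a parameter $s > 0$ to be chosen, take
\[
z = y + s\,\frac{y-x}{|y-x|}, \qquad \lambda = \sqrt{s(s+d)}.
\]
Then $|y-z| = s$, $|x-z| = s+d$, and $|x-z|\cdot|y-z| = \lambda^2$, so inversion at $(z,\lambda)$ sends $x$ to $y$ and $(n-2)\ln(|x-z|/\lambda) = \tfrac{n-2}{2}\ln((s+d)/s)$. A direct expansion using $z - y \parallel y - x$ gives the key identity
\[
|z|^2 - \lambda^2 = |y|^2 - s\,\frac{|x|^2 - |y|^2}{|x-y|},
\]
so $0 \notin B_\lambda(z)$ iff $s < s_*$, where $s_* = +\infty$ when $|y| \geq |x|$ and $s_* = |y|^2|x-y|/(|x|^2 - |y|^2)$ when $|y| < |x|$. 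Moreover, since $|y| < 1/2$ and $d < 1$, one checks that $B_\lambda(z) \subset B_1(0)$ whenever $s$ is smaller than some universal $s_0 > 0$, and $|x - z| > \lambda$ follows automatically from $s, d > 0$.

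\textbf{Two cases.} If $s_* \geq s_0$, take $s = s_0$; then $\ln((s+d)/s) = \ln(1 + d/s_0) \leq d/s_0$, yielding $\ln w(y) - \ln w(x) \leq C(n)\,|x - y|$. If $s_* < s_0$ (so necessarily $|y| < |x|$), take $s = s_*/2$; a direct calculation gives $(s+d)/s = 1 + 2(|x|^2 - |y|^2)/|y|^2$, and using $|y| \geq |x|/2$ from the hypothesis yields
\[
\ln\frac{s+d}{s} \leq \frac{8(|x|^2 - |y|^2)}{|x|^2},
\]
so $\ln w(y) - \ln w(x) \leq C(n)\,(|x|^2 - |y|^2)/|x|^2$. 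Combining the two cases gives the stated inequality. The oscillation estimate on $\partial B_R$ follows because $|x| = |y| = R$ kills the second term in the maximum and $|x - y| \leq 2R$; local Lipschitz continuity of $\ln w$ on $B_1(0) \setminus \{0\}$ follows after rescaling (the hypothesis is invariant under homothety) and interchanging the roles of $x$ and $y$, because on any compact subset of $B_1(0) \setminus \{0\}$ one has $(|x|^2 - |y|^2)/|x|^2 \leq C\,|x - y|$.

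\textbf{Main obstacle.} The only substantive point is identifying the threshold $s_*$ imposed by the puncture and recognizing that the borderline choice $s = s_*/2$ yields precisely the extra $(|x|^2 - |y|^2)/|x|^2$ term; everything else is a one-variable algebraic verification.
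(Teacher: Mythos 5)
Your construction is correct and is essentially the paper's own argument: both proofs choose a Kelvin sphere on the ray through $x$ and $y$ that swaps the two points, use a sphere of fixed (universal) size when the origin permits, and otherwise push the sphere up against the origin, which is exactly where the extra term $(|x|^2-|y|^2)/|x|^2$ comes from (the paper centers at $x+se$ with $\lambda=\sqrt{s(s-t)}$ and takes $s$ exactly at the threshold where the sphere passes through $0$; you center at $y+se$ with $\lambda=\sqrt{s(s+d)}$ and take $s=s_*/2$ --- the same construction up to reparametrization). The algebra in both cases checks out, as does the derivation of the oscillation bound on $\partial B_R$.

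One soft spot: your justification of the preliminary claim that $\ln w$ is locally Lipschitz on all of $B_1(0)\setminus\{0\}$ via homothety does not work as stated. A homothety of ratio $r<1$ transfers the hypothesis to $w(r\,\cdot)$ on $B_1\setminus\{0\}$, but the resulting estimate on $B_{1/2}$ for the rescaled function only covers $B_{r/2}$ for $w$, i.e.\ it never reaches the annulus $\{1/2\le |x|<1\}$; taking $r>1$ is not allowed because the hypothesis is only given on $B_1\setminus\{0\}$. The fix is standard and is what the paper does: around any $x_0$ with $B_\rho(x_0)\subset B_1(0)\setminus\{0\}$, every sphere contained in $B_\rho(x_0)$ automatically avoids the puncture, so one can invoke the unpunctured version (\cite[Lemma 2]{LiNg-arxiv}) --- or equivalently rerun your construction with the constraint $s<s_*$ vacuous --- after translating and rescaling $B_\rho(x_0)$ to the unit ball. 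With that replacement the proof is complete.
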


\begin{proof} By \cite[Lemma 2]{LiNg-arxiv}, $\ln w$ is locally Lipschitz in $B_1(0) \setminus \{0\}$ and
\[
|\nabla\ln w(x)| \leq \frac{C(n)}{|x|} \text{ a.e. in }B_{1/2}(0) \setminus \{0\}.
\]
Thus it suffices to consider $x \in B_{1/16}(0) \setminus \{0\}$ and all $y \in B_{1/16}(0) \setminus B_{|x|/2}(0)$. Let
\[
e = \frac{y - x}{|y - x|} \text{ and } t = |y - x| \leq \frac{1}{8}.
\]

Consider first the case $|y| \geq |x|$, i.e. $2x \cdot e + t \geq 0$ . Then, for $z_1 = x + \frac{1}{4}\,e$ and $\lambda_1 = \frac{1}{2}(\frac{1}{4} - t)^{1/2}$, we have
\[
\lambda_1^2 = \frac{1}{16} - \frac{t}{4} \leq \frac{1}{16} + \frac{1}{2} x \cdot e \leq |z_1|^2,
\]
and thus
\[
w(x) \geq w_{z_1,\lambda_1}(x) = (4\lambda_1)^{n-2}\,w(y) = (1 - 4t)^{\frac{n-2}{2}}\,w(y).
\]
It follows that
\[
\ln w(y) - \ln w(x) \leq -\frac{n-2}{2} \ln(1 - 4t) \leq C(n)\,t.
\]

Next, assume that $|y| > |x|$. Let
\[
s = \frac{|x|^2}{-(2x \cdot e + t)} = \frac{|x|^2}{|x|^2 - |y|^2}\,t > \frac{4}{3}\,t > 0.
\]
If $s \geq \frac{1}{4}$, then
\[
\lambda_1^2 = \frac{1}{16} - \frac{t}{4} \leq \frac{1}{16} + \frac{1}{2}x \cdot e + |x|^2 = |z_1|^2,
\]
and so we continue to have $\ln w(y) - \ln w(x) \leq C(n)\,t$ as desired. If $s < \frac{1}{4}$, we consider $z_2 = x + s\,e$ and $\lambda_2 = \sqrt{s(s-t)}$. We have
\[
\lambda_2^2 = s^2 - st = s^2 + 2\,s\,x \cdot e + |x|^2 = |z_2|^2.
\]
This leads to
\[
w(x) \geq w_{z_2,\lambda_2}(x) = \frac{\lambda_2^{n-2}}{s^{n-2}}\,w(y) = \frac{(s - t)^{(n-2)/2}}{s^{(n-2)/2}}\,w(y)
\]
and so
\[
\ln w(y) - \ln w(x) \leq -\frac{n-2}{2}\ln\Big(1 - \frac{t}{s}\Big) \leq C(n)\frac{t}{s}.
\]
The assertion follows.
\end{proof}

\section{Proof of Lemma \ref{Lemma21} and more on
$\mu_\Gamma^+$ and $\mu_\Gamma^-$}\label{AppL21}

Associated with a $\Gamma$ satisfying (\ref{2}) and
(\ref{3}), we have introduced $\mu_\Gamma^+\in [0, n-1]$
and $\mu_\Gamma^-\in [n-1, \infty]$
in (\ref{muGamma}) and (\ref{muGammaminus}) respectively.
In this appendix we provide more  properties of $\Gamma$ in connection with
$\mu_\Gamma^+$ and $\mu_\Gamma^-$ and prove
Lemma \ref{Lemma21}.

It is convenient to extend the definition of $\mu_\Gamma^\pm$ for cones $\Gamma$ satisfying \eqref{2} and $\Gamma \subset \Gamma_1$ (instead of the 
stronger condition \eqref{3}):
\begin{align*}
\mu_\Gamma^+ &= \sup \Big\{t :  (-t, 1, 1,\ldots, 1) \in \Gamma\Big\},\\
\mu_\Gamma^- &= \inf \Big\{t :  (t, -1, -1,\ldots, -1) \in \Gamma\Big\}.
\end{align*}
In the definition of $\mu_\Gamma^-$, if the set of such $t$ is empty, the corresponding infimum is taken to be $+\infty$. Evidently, $\mu_\Gamma^- \in [n-1,\infty]$ and $\mu_\Gamma^+  \leq n-1$. 

We claim that $\mu_\Gamma^+ > -1$. To see this, pick an arbitrary $\lambda = (\lambda_1, \ldots, \lambda_n) \in \Gamma$ and consider the set of all permutations of $\lambda$. This is a subset of $\Gamma$ and so its center of mass $(\frac{\lambda_1 + \ldots + \lambda_n}{n}, \ldots, \frac{\lambda_1 + \ldots + \lambda_n}{n})$ belongs to $\Gamma$. Since $\lambda_1 + \ldots + \lambda_n > 0$, this implies that $(1, \ldots, 1) \in \Gamma$. As $\Gamma$ is open, we can thus find some $\epsilon = \epsilon(\Gamma) > 0$ such that $(1 - \epsilon, 1 , \ldots, 1 ) \in \Gamma$, which implies $\mu_\Gamma^+ \geq -(1 - \epsilon)$, as claimed.

Define
\begin{align*}
\mcC^+(\mu)
	&= \Big\{\Gamma:\ \Gamma \text{ satisfying \eqref{2}}, \Gamma \subset \Gamma_1 \text{ and } \mu_\Gamma^+ = \mu\Big\}, \mu \in (-1,n-1], \\
\mcC^-(\mu)
	&= \Big\{\Gamma:\ \Gamma \text{ satisfying \eqref{2}}, \Gamma \subset \Gamma_1 \text{ and } \mu_\Gamma^- = \mu\Big\}, \mu \in [n-1,\infty],
\end{align*}
and
\[
L\Gamma^\pm(\mu) = \cap \mcC^\pm(\mu), \text{ and } U\Gamma^\pm(\mu) = \cup \mcC^\pm(\mu).
\]
In what to follow, we show that $L\Gamma^\pm(\mu)$ and $U\Gamma^\pm(\mu)$ belong to $\mcC^\pm(\mu)$ and give an explicit description for these cones. More specifically, we have

\begin{proposition}
There hold
\begin{align}
&\mcC^\pm(n-1) = \{\Gamma_1\}, L\Gamma^\pm(n-1) = U\Gamma^\pm(n-1) = \Gamma_1,
	\label{C2-1}\\
&L\Gamma^+(\mu) = \Big\{\lambda: \lambda_i - \frac{1}{n - 1 - \mu}\sum_{j=1}^n \lambda_j < 0 \text{ for all } i\Big\},  \forall~ \mu \in(-1,n-1),
	\label{C2-2}\\
&U\Gamma^+(\mu) = \Big\{\lambda: \lambda_i + \frac{\mu}{n - 1 - \mu}\sum_{j=1}^n \lambda_j > 0 \text{ for all } i\Big\},\forall~ \mu \in(-1,n-1),
	\label{C2-3}\\
&L\Gamma^-(\mu) 
	= \Big\{\lambda: \lambda_i + \frac{1}{\mu - (n-1)} \sum_{j = 1}^n \lambda_j > 0 \text{ for all } i\Big\}, \forall~ \mu \in (n -1 ,\infty],
	\label{C2-4}
\end{align}
\begin{align}
&U\Gamma^-(\mu) 
	= \Big\{\lambda: \lambda_i - \frac{\mu}{\mu - (n-1)} \sum_{j = 1}^n \lambda_j < 0 \text{ for all } i\Big\},  \forall~ \mu \in (n -1 ,\infty],
	\label{C2-5}\\
&U\Gamma^+(\mu)
	= \cup \Big\{\Gamma:\ (\ref{2}) \ 
\text{and}  \ (\ref{3}) \  \text{hold and}\  \mu_\Gamma^+=\mu\Big\},
 \forall~\mu \in [0,n-1].
	\label{C2-6}
\end{align}
\end{proposition}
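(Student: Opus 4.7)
The plan is to verify the six identities by a uniform two-step strategy: for each $\mu$ in the appropriate range, introduce the candidate cone $\Sigma$ on the right-hand side of the identity, check directly that $\Sigma \in \mcC^\pm(\mu)$ (i.e., $\Sigma$ is open, convex, symmetric, contained in $\Gamma_1$, and satisfies $\mu_\Sigma^\pm = \mu$---the last point obtained by substituting $(-t, 1, \ldots, 1)$ or $(t, -1, \ldots, -1)$ into the defining inequalities), and then prove the appropriate inclusion against an arbitrary $\Gamma \in \mcC^\pm(\mu)$. Once done, $L\Gamma^\pm(\mu) = \Sigma$ (resp.\ $U\Gamma^\pm(\mu) = \Sigma$) follows because $\Sigma$ is exhibited as the smallest (resp.\ largest) member of the family $\mcC^\pm(\mu)$.

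For \eqref{C2-2} and \eqref{C2-4} I would show $\Sigma \subset \Gamma$ by a convex-combination device. Writing $v^{(i)}$ for the permutation of $(-\mu, 1, \ldots, 1)$, resp.\ $(\mu, -1, \ldots, -1)$, with the distinguished entry in slot $i$, symmetry of $\Gamma$ and conic homogeneity place every positive multiple of $v^{(i)}$ on $\partial\Gamma \subset \bar\Gamma$. A direct linear-algebraic computation then writes any $\lambda \in \Sigma$ as a positive combination $\lambda = \sum_i \beta_i v^{(i)}$ with all $\beta_i > 0$; since $\bar\Gamma$ is a closed convex cone, $\lambda \in \bar\Gamma$, and since $\Sigma$ is open while $\operatorname{int}(\bar\Gamma) = \Gamma$ for any open convex $\Gamma$ with non-empty interior, $\lambda \in \Gamma$.

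For \eqref{C2-3} and \eqref{C2-5} I would show the dual inclusion $\Gamma \subset \Sigma$ via symmetric supporting hyperplanes. At the boundary point $p = (-\mu, 1, \ldots, 1)$ (resp.\ $(\mu, -1, \ldots, -1)$) there exists a nonzero supporting functional $\alpha$ with $\alpha \cdot p = 0$ and $\alpha \cdot \lambda > 0$ for all $\lambda \in \Gamma$. Averaging $\alpha$ over the stabilizer $G \simeq S_{n-1}$ of $p$ yields $\bar\alpha$ of the form $(\bar\alpha_1, \bar\alpha_2, \ldots, \bar\alpha_2)$. The principal technical obstacle is to rule out the possibility $\bar\alpha = 0$; I would handle this by using that $(1, \ldots, 1) \in \Gamma$, so that $\alpha \cdot (1, \ldots, 1) > 0$, a strict positivity preserved under $G$-averaging. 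The two constraints $\bar\alpha \cdot p = 0$ and $\bar\alpha \cdot (1, \ldots, 1) > 0$ then force $\bar\alpha$ to be a positive multiple of $(n-1, \mu, \ldots, \mu)$; applying the full group $S_n$ and dividing by $n-1-\mu$ in the ``$+$'' case or by $\mu - (n-1)$ in the ``$-$'' case (where the sign flip of the coefficient of $\lambda_i$ converts the inequality into the stated form) yields $\Gamma \subset \Sigma$.

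Finally, the edge case \eqref{C2-1} reduces to $\Gamma = \Gamma_1$ for every $\Gamma \in \mcC^\pm(n-1)$. The containment $\Gamma \subset \Gamma_1$ is immediate, and for the reverse I would reapply the convex-combination argument at each $\mu' < n-1$ (resp.\ $\mu' > n-1$), this time noting that the $v^{(i)}$ built from $\mu'$ lie in the interior of $\Gamma$ (because $\mu' \neq \mu_\Gamma^\pm = n-1$), so the positive combination lies in $\Gamma$ directly, giving $L\Gamma^\pm(\mu') \subset \Gamma$; since $\bigcup_{\mu'} L\Gamma^\pm(\mu') = \Gamma_1$ (easy from the explicit formulas already proved), we obtain $\Gamma_1 \subset \Gamma$. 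For \eqref{C2-6} I would show that $U\Gamma^+(\mu)$ itself satisfies \eqref{3}: $\Gamma_n \subset U\Gamma^+(\mu)$ is immediate from the non-negativity of each summand when $\mu \in [0, n-1)$, and summing the defining inequalities over $i$ gives $\bigl(1 + \tfrac{n\mu}{n-1-\mu}\bigr)\sum_j \lambda_j > 0$, forcing $\sum_j \lambda_j > 0$ and hence $U\Gamma^+(\mu) \subset \Gamma_1$; therefore $U\Gamma^+(\mu)$ lies in the more restrictive union on the right-hand side of \eqref{C2-6}, yielding the desired equality (the case $\mu = n-1$ follows directly from \eqref{C2-1}).
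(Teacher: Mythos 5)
Your proposal is correct, and for half of the identities it takes a genuinely different route from the paper. For \eqref{C2-2} and \eqref{C2-4} the two arguments are essentially the same: you compute the coefficients $\beta_i = \frac{1}{1+\mu}\bigl(\frac{1}{n-1-\mu}\sum_k\lambda_k - \lambda_i\bigr)$ in the expansion $\lambda=\sum_i\beta_i v^{(i)}$ and read off that positivity of all $\beta_i$ is exactly the defining inequality of the candidate cone, whereas the paper identifies $L\Gamma^+(\mu)$ with the open conic hull of $\conv(S)$ ($S$ the permutation orbit of $(-\mu,1,\ldots,1)$), notes that this hull is itself a member of $\mcC^+(\mu)$, and then computes the equations of its faces; these are two descriptions of the same polyhedral cone. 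The real divergence is in \eqref{C2-3} and \eqref{C2-5}: the paper argues by contradiction, taking a normalized $\lambda\in\Gamma$ violating the candidate inequality, symmetrizing its last $n-1$ coordinates by convexity to get $(\lambda_1,\frac{x}{n-1},\ldots,\frac{x}{n-1})\in\Gamma$, and rescaling to contradict $\mu_\Gamma^+=\mu$; you instead take a supporting functional at the boundary point $(-\mu,1,\ldots,1)$, average over its stabilizer, and use $(1,\ldots,1)\in\Gamma$ to rule out degeneration, pinning the functional down to a positive multiple of $(n-1,\mu,\ldots,\mu)$. Both are sound; the paper's is more elementary (only convexity, symmetry and homogeneity of $\Gamma$), while yours is more systematic and makes transparent why the answer is an intersection of $n$ half-spaces indexed by the orbit of one supporting hyperplane. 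Your treatment of \eqref{C2-1}, exhausting $\Gamma_1$ by the cones $L\Gamma^\pm(\mu')$ as $\mu'\to n-1$, also differs from (and works as well as) the paper's argument that $\conv(S)$ contains a relative neighborhood of the origin in the hyperplane $\partial\Gamma_1$ and then invoking homothety; your \eqref{C2-6} matches the paper's.

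One caveat, which you share with the paper's ``proved analogously'': both the positive-combination device and the supporting-hyperplane device presuppose a finite boundary point $(\mu,-1,\ldots,-1)$, so neither literally covers the endpoint $\mu=\infty$ admitted in \eqref{C2-4}--\eqref{C2-5}. For \eqref{C2-5} at $\mu=\infty$ the symmetrization idea still works directly (if $\sum_{j\ne i}\lambda_j\le 0$ for some $\lambda\in\Gamma$, symmetrize the coordinates $j\ne i$ and rescale to produce some $(t,-1,\ldots,-1)\in\Gamma$), but \eqref{C2-4} at $\mu=\infty$ genuinely requires separate scrutiny, since $\mcC^-(\infty)$ contains arbitrarily thin symmetric cones around the diagonal; this endpoint deserves an explicit sentence in any complete write-up.
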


\begin{proof} We will only prove the statements about $\mcC^+(n-1)$, $L\Gamma^+(\mu)$ and $U\Gamma^+(\mu)$. The ones about $\mcC^-(n-1)$, $L\Gamma^-(\mu)$ and $U\Gamma^-(\mu)$ can be proved analogously. Let $S$ be the set consisting of $(-\mu, 1, \ldots, 1)$ and its permutations, and $\conv(S)$ the open convex hull of $S$.
For convenience we denote $S=\{v_1, v_2, ..., v_n\}$ with $v_1=(-\mu,
1, \ldots,1)$.

Assume that $\mu = n-1$. If $\Gamma \in \mcC^+(\mu)$, then $\conv(S) \subset \bar\Gamma$. On the other hand, since $\mu\ne -1$, $\{v_1-v_n, v_2-v_n, \ldots, v_{n-1}-v_n\}$ 
is linearly independent.
Also, as $\mu = n-1$, $S \subset \partial \Gamma_1$. 
 Note that $0$ is the center of mass of $S$ and  hence is
 in $\conv(S)$. 
Thus $\conv(S)$, and therefore $\bar\Gamma$,
 contains a neighborhood of the origin relative to the plane $\partial\Gamma_1$. By homothety $\bar\Gamma \supset \partial\Gamma_1$, which implies that $\Gamma = \Gamma_1$. We have shown that $\mcC^+(n-1) = \{\Gamma_1\}$, and so $L\Gamma^+(n-1) = U\Gamma^+(n-1) = \Gamma_1$.

Assume that $\mu < n-1$. Observe that $L\Gamma^+(\mu)$ is the cone consisting of points of the form $t\lambda$ for some $t > 0$ and some $\lambda \in \conv(S)$. This is because the latter cone is a member of $\mcC^+(\mu)$. Consider a face, say $F$, of $L\Gamma^+(\mu)$. $F$ is a plane going through the origin and $n-1$ other points in $S$. Clearly, there is a unique $i$ such that the $i$-th coordinate of those $n-1$ points is $1$. It follows that the equation of $F$ is 
\[
\lambda_i - \frac{1}{n - 1 - \mu}\sum_{j=1}^n \lambda_j = 0,
\]
whence \eqref{C2-2}.

We turn to \eqref{C2-3}. Let $A$ denote the cone on the right hand side of \eqref{C2-3}. It is easy to check that $A \in \mcC^+(\mu)$ and hence $A \subset U\Gamma^+(\mu)$. Arguing by contradiction, assume that $U\Gamma^+(\mu) \setminus A \neq \emptyset$. Then we can find a cone $\Gamma \in \mcC^+(\mu)$ and a vector $\lambda \in \Gamma$ such that $\lambda_1 + \ldots + \lambda_n = n - 1 - \mu$ and $\lambda_i + \mu \leq 0$ for some $i$. By symmetry, we can assume that $i = 1$, i.e. $\lambda_1 \leq -\mu$. Note that this implies $x := \lambda_2 + \ldots + \lambda_n \geq n - 1$. Now, by convexity, $(\lambda_1, \frac{x}{n-1}, \ldots, \frac{x}{n-1}) \in \Gamma$, which implies that
\[
\Big(\frac{(n-1)\lambda_1}{x}, 1, \ldots, 1\Big) \in \Gamma.
\]
It follows that $\mu_\Gamma^+ \geq \frac{(n-1)|\lambda_1|}{x} > \mu$, contradicting the definition of $\mcC^+(\mu)$. \eqref{C2-3} is proved.

Finally, \eqref{C2-6} follows from \eqref{C2-3} as $U\Gamma^+(\mu) \supset \Gamma_n$ for $\mu \geq 0$.
\end{proof}

\begin{proof}[Proof of Lemma \ref{Lemma21}.] (a) is clear. (b) follows from $\mcC^\pm(n-1) = \{\Gamma_1\}$. (c) follows from \eqref{3}  and $U\Gamma^+(0) = \Gamma_n$. (e) is a consequence of (d).

For (d), note first that $(-\muGp, 1, \ldots, 1) \in \bar\Gamma \subset \overline{U\Gamma^-(\muGm)}$, which implies that
\[
1 - \frac{\muGm}{\muGm - (n-1)}(-\muGp + (n-1)) \leq 0.
\]
Likewise, $(\muGm, -1, \ldots, -1) \in \bar\Gamma \subset \overline{U\Gamma^+(\muGp)}$ and so
\[
-1 + \frac{\muGp}{n - 1 - \muGp}\,(\muGm - (n-1)) \geq 0.
\]
(d) follows.
\end{proof}

Note that the cone $U\Gamma^+(\mu)$ was used
in Li and Li \cite{LiLi03}, Gursky and Viaclovsky
 \cite{GV06} and Trudinger and Wang \cite{TW09}.
A family of cones connecting $\Gamma_1$ and $\Gamma$
was used in \cite{LiLi03}:
$$
\Gamma_t=\Big\{ \lambda\ :\ t\lambda +(1-t)\sigma_1(\lambda)e\in \Gamma\Big\},
\qquad 0\le t\le 1,
$$
where $e=(1,1,...,1)$.
The so-called $\theta-$convex cone 
$$
\Sigma_\theta=\Big\{\lambda\ :\
\lambda_i+\theta\sum_{j=1}^n\lambda_j>0\ \text{for all}\ i\Big\}
$$
was used in
\cite{GV06,TW09}. It is clear that
$$
\Sigma_\theta=(\Gamma_n)_{\frac 1{1+\theta}}
=U\Gamma^+\left(\frac{  (n-1)\theta}{1+\theta} \right),\qquad 
\text{for all}\ \theta\ge 0.
$$




 \frenchspacing
\bibliography{paris}{}
\bibliographystyle{cpam}

\newcommand{\noopsort}[1]{}

\bibliographystyle{plain}

\end{document}